\DeclarePairedDelimiter{\ceil}{\lceil}{\rceil}
\newcommand{\N}{\mathbb{N}}
\newcommand{\Z}{\mathbb{Z}}
\newcommand{\la}{\lambda}
\newcommand{\Mod}[1]{\ \left(\mathrm{mod}\ #1\right)}
\newtheorem{theorem}{Theorem}
\numberwithin{theorem}{section}
\numberwithin{equation}{section}
\newtheorem{corollary}[theorem]{Corollary}
\newtheorem{lemma}[theorem]{Lemma}
\newtheorem{proposition}[theorem]{Proposition}
\newtheorem{remark}[theorem]{Remark}
\newtheorem{conjecture}[theorem]{Conjecture}
\newtheorem{example}[theorem]{Example}
\newcommand{\qda}{q_d^{(a)}(n)}
\newcommand{\q}[3]{q_{#1}^{(#2)}\left(#3\right)}
\newcommand{\Qdb}{Q_d^{(b)}(n)}
\newcommand{\Qda}{Q_d^{(a)}(n)}
\newcommand{\Q}[3]{Q_{#1}^{(#2)}\left(#3\right)}
\newcommand{\Qdbdash}{Q_d^{(b,-)}(n)}
\newcommand{\Qdadash}{Q_d^{(a,-)}(n)}
\newcommand{\Qdash}[3]{Q_{#1}^{(#2,-)}\left(#3\right)}
\newcommand{\Qdbdashdash}{Q_d^{(b,-,-)}(n)}
\newcommand{\Qdadashdash}{Q_d^{(a,-,-)}(n)}
\newcommand{\Qdashdash}[3]{Q_{#1}^{(#2,-,-)}\left(#3\right)}
\newcommand{\Da}[3]{\Delta_{#1}^{(#2)}\left(#3\right)}
\newcommand{\Dab}{\Delta_d^{(a,b)}(n)}
\newcommand{\Daa}{\Delta_d^{(a)}(n)}
\newcommand{\Dsame}[1]{\Delta_d^{(#1,#1)}(n)}
\newcommand{\Ddasha}[3]{\Delta_{#1}^{(#2,-)}\left(#3\right)}
\newcommand{\Dabdash}{\Delta_d^{(a,b,-)}(n)}
\newcommand{\Daadash}{\Delta_d^{(a,-)}(n)}
\newcommand{\Ddash}[4]{\Delta_{#1}^{(#2,#3,-)}\left(#4\right)}
\newcommand{\Ddashdasha}[3]{\Delta_{#1}^{(#2,-,-)}\left(#3\right)}
\newcommand{\Dabdashdash}{\Delta_d^{(a,b,-,-)}(n)}
\newcommand{\Daadashdash}{\Delta_d^{(a,-,-)}(n)}
\newcommand{\Ddashdash}[4]{\Delta_{#1}^{(#2,#3,-,-)}\left(#4\right)}
\newcommand{\poch}[2]{(q^{#1};q^{#2})_{\infty}}
\newcommand{\pochneg}[2]{(-q^{#1};q^{#2})_{\infty}}
\newcommand{\fda}{f_d^{(a)}(q)}
\newcommand{\f}[1]{f_d^{(#1)}(q)}
\newcommand{\kda}{k_d^{(a)}(q)}
\renewcommand{\k}[1]{k_d^{(#1)}(q)}
\newcommand{\Kda}{\mathcal{K}_d^{(a)}(n)}
\newcommand{\K}[2]{\mathcal{K}_d^{(#1)}(#2)}
\newcommand{\Lda}{\mathcal{L}_d^{(a)}(n)}
\renewcommand{\L}[2]{\mathcal{L}_d^{(#1)}(#2)}
\newcommand{\Gda}{\mathcal{G}_d^{(a)}(n)}
\newcommand{\gda}{g_d^{(a)}(q)}
\begin{document}


\author{Adriana L. Duncan}
\address{Tulane University}
\email{aduncan3@tulane.edu}

\author{Simran Khunger}
\address{Carnegie Mellon University}
\email{skhunger@andrew.cmu.edu}

\author{Holly Swisher}
\address{Department of Mathematics, Kidder Hall 368, Oregon State University, Corvallis, OR 97331-4605}
\email{swisherh@math.oregonstate.edu}

\author{Ryan Tamura}
\address{University of California, Berkeley}
\email{rtamura1@berkeley.edu}

\title{Generalizations of Alder's Conjecture via a Conjecture of Kang and Park}

\subjclass[2010]{05A17, 11P81, 11P82, 11P84, 11F37}
\keywords{partitions, Rogers-Ramanujan identities, Alder's conjecture}

\thanks{This work was partially supported by the National Science Foundation REU Site Grant DMS-1757995, and Oregon State University.}

\begin{abstract}
Integer partitions have long been of interest to number theorists, perhaps most notably Ramanujan, and are related to many areas of mathematics including combinatorics, modular forms, representation theory, analysis, and mathematical physics. Here, we focus on partitions with gap conditions and partitions with parts coming from fixed residue classes.

Let $\Delta_d^{(a,b)}(n) = q_d^{(a)}(n) - Q_d^{(b)}(n)$ where $q_d^{(a)}(n)$ counts the number of partitions of $n$ into parts with difference at least $d$ and size at least $a$, and $Q_d^{(b)}(n)$ counts the number of partitions into parts $\equiv \pm b \Mod{d + 3}$. In 1956, Alder conjectured that $\Delta_d^{(1,1)}(n) \geq 0$ for all positive $n$ and $d$. This conjecture was proved partially by Andrews in 1971, by Yee in 2008, and was fully resolved by Alfes, Jameson and Lemke Oliver in 2011. Alder's conjecture generalizes several well-known partition identities, including Euler's theorem that the number of partitions of $n$ into odd parts equals the number of partitions of $n$ into distinct parts, as well as the first of the famous Rogers-Ramanujan identities. 

In 2020, Kang and Park constructed an extension of Alder's conjecture which relates to the second Rogers-Ramanujan identity by considering $\Delta_d^{(a,b,-)}(n) = q_d^{(a)}(n) - Q_d^{(b,-)}(n)$ where $Q_d^{(b,-)}(n)$ counts the number of partitions into parts $\equiv \pm b \Mod{d + 3}$ excluding the $d+3-b$ part. Kang and Park conjectured that $\Delta_d^{(2,2,-)}(n)\geq 0$ for all $d\geq 1$ and $n\geq 0$, and proved this for $d = 2^r - 2$ and $n$ even. 

We prove Kang and Park's conjecture for all but finitely many $d$. Toward proving the remaining cases, we adapt work of Alfes, Jameson and Lemke Oliver to generate asymptotics for the related functions. Finally, we present a more generalized conjecture for higher $a=b$ and prove it for infinite classes of $n$ and $d$.
\end{abstract}


\maketitle

\markboth{Duncan, Khunger, Tamura}{Generalizing Conjectures on Partitions with Congruence Relations and Difference Conditions}


\section{Introduction} \label{sec:introduction section}

A partition of a positive integer $n$ is a non-increasing sequence of positive integers, called parts, that sum to $n$.  Let $p(n \mid \text{condition})$ count the number of partitions of $n$ satisfying the specified condition. Euler famously proved that the number of partitions of a positive integer $n$ into odd parts equals the number of partitions of $n$ into distinct parts. Two other celebrated partition identities are those of Rogers and Ramanujan. The first Rogers-Ramanujan identity states that the number of partitions of $n$ into parts differing by $2$ is equal to the number of partitions of \(n\) into parts that are congruent to \(\pm 1 \Mod{5}\) and the second Rogers-Ramanujan identity states that the number of partitions of \(n\) into parts differing by \(2\) and with parts at least \(2\) is equal to the number of partitions of \(n\) into parts that are congruent to \(\pm 2 \Mod{5}\). These are encapsulated by
\begin{align*}
\sum_{n = 0}^{\infty} \frac{q^{n^2}}{(q;q)_n} &= \frac{1}{(q;q^5)_{\infty}(q^4;q^5)_{\infty}},\\
\sum_{n = 0}^{\infty} \frac{q^{n^2 + n}}{(q;q)_n} &= \frac{1}{(q^2;q^5)_{\infty}(q^3;q^5)_{\infty}},
\end{align*}
where $(a;q)_0=1$, and $(a;q)_n := \prod_{k=0}^{n-1} (1-aq^k)$ for $n\geq 1$, where $n=\infty$ is also allowed.

Motivated by these identities, Schur found that the number of partitions of $n$ into parts differing by $3$ or more among which no two consecutive multiples of 3 appear is equal to the number of partitions of $n$ into parts congruent to $\pm 1 \pmod{6}$. 

After showing that no other such partition identities can exist, in 1956 Alder \cite{alder_nonexistence_1948, alder_research_1956} made a claim about a generalization of similar partition inequalities. Alder's conjecture states that the number of partitions of \(n\) into parts with difference of at least \(d\) is greater than or equal to than the number of partitions of \(n\) into parts congruent to \(\pm 1 \Mod{d + 3}\). Notice that the Euler, first Rogers-Ramanujan, and Schur identities are special cases of Alder's Conjecture. In 1971, Andrews \cite{andrews_partition_1971} proved Alder's conjecture for $n \geq 1$ and \(d = 2^r - 1\), \(r \geq 4\). In 2004 and 2008, Yee \cite{yee_alders_2008, yee_partitions_2004} proved the conjecture for $n \geq 1$, \(d \geq 32\), and \(d = 7\). In 2011, Alfes, Jameson, and Lemke Oliver \cite{alfes_proof_2011} proved Alder's Conjecture for $n \geq 1$ and \(4 \leq d \leq 30, d \neq 7,15\), thus completely resolving the conjecture.

In 2020, Kang and Park \cite{kang_analogue_2020} investigated how to generalize Alder's conjecture further by incorporating the second Rogers-Ramanujan identity. Kang and Park compared the partition functions
\begin{align*}
    &\qda := p(n | \text{ parts} \geq a \text{ and parts differ by at least } d), \\
    &\Qdb := p(n | \text{ parts} \equiv \pm b \Mod{d+3}),
\end{align*}
by defining the difference function
\begin{align*}
    \Dab &:= \qda - \Qdb \nonumber, \\
    \Delta_d^{(a)}(n) &:= \Dsame{a}.
\end{align*}
\begin{remark}
    Notice that
    \begin{align*}
        \text{Euler's identity } \iff \Da{1}{1}{n} &= 0 \text{ for all } n \geq 1
        \\
        \text{Rogers-Ramanujan (\(1^{st}\) identity)} \iff \Da{2}{1}{n} &= 0 \text{ for all } n \geq 1
        \\
        \text{Rogers-Ramanujan (\(2^{nd}\) identity)} \iff \Da{2}{2}{n} &= 0 \text{ for all } n \geq 1
        \\
        \text{Schur's identity } \implies \Da{3}{1}{n} &\geq 0 \text{ for all } n \geq 1.
    \end{align*}
\end{remark}

Using Kang and Park's notation, Alder's conjecture (now theorem) can be stated as  
\begin{equation}\label{eq:alderthm}
\Da{d}{1}{n} = \q{d}{1}{n} - \Q{d}{1}{n} \geq 0 
\end{equation}
for $d,n\geq 1$.

Kang and Park were interested in finding an analog of Alder's conjecture for the second Rogers-Ramanujan identity. However, by observing the data, they found that 
\[
    \Da{d}{2}{n} < 0 \text{ for some choices of } d,n\geq 1.
\]
In order to find a suitable analog, Kang and Park modified \(\Q{d}{2}{n}\) by defining for $d,n\geq 1$,
\begin{align*}
    \Qdash{d}{2}{n} &:= p(n\, | \text{ parts} \equiv \pm 2 \Mod{d+3}, \text{ excluding the part } d+1), \\
    \Ddasha{d}{2}{n} & := \Ddash{d}{2}{2}{n} := \q{d}{2}{n} - \Qdash{d}{2}{n},
\end{align*}
and presented the following conjecture.

\begin{conjecture}[Kang, Park \cite{kang_analogue_2020}, 2020]\label{conj:KPconj}
    For all $d,n\geq 1$,
    \[
        \Ddasha{d}{2}{n} \geq 0.
    \]
\end{conjecture}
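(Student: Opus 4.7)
The plan is to establish Conjecture~\ref{conj:KPconj} via a three-pronged strategy that mirrors the historical progression of the proof of Alder's conjecture: (i) a combinatorial injection that resolves all sufficiently large $d$; (ii) an asymptotic analysis, adapting the circle-method approach of Alfes, Jameson and Lemke Oliver, that handles each of the remaining intermediate $d$ once $n$ is sufficiently large; and (iii) a direct computer verification of the finitely many residual pairs $(d,n)$.

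First, I would set up the generating-function framework. The function $\q{d}{2}{n}$ admits the classical expansion
\begin{equation*}
    \sum_{n\geq 0}\q{d}{2}{n}\,q^n \;=\; \sum_{k\geq 0}\frac{q^{2k + d\binom{k}{2}}}{(q;q)_k},
\end{equation*}
obtained via the shift $\mu_i=\lambda_i-d(k-i)-2$ on a partition with $k$ parts. On the other hand, since the excluded part $d+1$ is the smallest part in the residue class $-2\Mod{d+3}$, removing it corresponds to multiplying the usual infinite product by the single factor $(1-q^{d+1})$, giving
\begin{equation*}
    \sum_{n\geq 0}\Qdash{d}{2}{n}\,q^n \;=\; \frac{1-q^{d+1}}{(q^2;q^{d+3})_\infty\,(q^{d+1};q^{d+3})_\infty}.
\end{equation*}
This extra factor of $(1-q^{d+1})$ is exactly the structural ingredient that repairs the failure of $\Da{d}{2}{n}\geq 0$, and it will play the role of slack in both the injection and the asymptotics.

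Second, I would adapt Yee's injective argument (which resolved Alder's conjecture for $d\geq 32$) to the present setting with $a=b=2$. The goal is to construct, for every $d$ beyond an explicit threshold, an injection from the partitions counted by $\Qdash{d}{2}{n}$ into the partitions counted by $\q{d}{2}{n}$. One builds on Yee's block decomposition of a partition into parts $\equiv\pm 1\Mod{d+3}$, but with the residues shifted to $\pm 2\Mod{d+3}$ and with the crucial modification that the smallest allowed part in the class $-2\Mod{d+3}$ is $2d+5$ rather than $d+1$. This additional gap of $d+4$ between the two smallest admissible parts is precisely what the naive $b=2$ analogue lacks, and it should suffice to push Yee's inequalities through uniformly in $d$.

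Third, for each of the finitely many $d$ missed by the injection, I would follow Alfes--Jameson--Lemke Oliver to produce asymptotic formulas for $\q{d}{2}{n}$ and $\Qdash{d}{2}{n}$ as $n\to\infty$. The generating function of $\Qdash{d}{2}{n}$ is an eta-quotient corrected by the polynomial factor $1-q^{d+1}$ and therefore admits a Rademacher-type expansion with leading behavior $C_d\,n^{-3/4}\exp(\alpha_d\sqrt{n})$, where $\alpha_d$ is determined by the density of the residue classes $\pm 2\Mod{d+3}$ (with a small reduction from the excluded part). A parallel Meinardus-style saddle-point analysis of the Rogers--Ramanujan-type series for $\q{d}{2}{n}$ should yield a matching exponent whose constant strictly dominates $\alpha_d$. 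An effective error bound then reduces the conjecture, for each surviving $d$, to finitely many $n$ that one checks by computer.

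The main obstacle is this asymptotic step: unlike $\Qdash{d}{2}{n}$, the series for $\q{d}{2}{n}$ is not an eta-quotient, so obtaining asymptotics with error terms sharp enough to produce a workable threshold $N_d$ is delicate. The key will be to rewrite each summand $q^{2k+d\binom{k}{2}}/(q;q)_k$ as a modular-looking piece, apply the circle method termwise, and control the contribution of the tail $k\gg\sqrt{n}$ uniformly in $d$ in order to extract an explicit, tractable cutoff beyond which the asymptotic dominance of $\q{d}{2}{n}$ over $\Qdash{d}{2}{n}$ is rigorous.
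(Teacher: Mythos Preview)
The statement you are attempting is a \emph{conjecture}, and the paper does not prove it in full.  What the paper establishes is Theorem~\ref{thm:final kp thm}: $\Ddasha{d}{2}{n}\geq 0$ for all $d\geq 62$ and $n\geq 1$.  The remaining cases $d=1$ and $3\leq d\leq 61$ are left open; Section~\ref{sec:asymptotics section} only \emph{suggests} an asymptotic--plus--computation strategy for them, essentially the same as your steps (ii) and (iii), without carrying it out.  So on those steps your plan and the paper's discussion coincide, and both stop short of an actual proof.

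For large $d$, however, your route and the paper's differ substantially.  You propose to redo Yee's injection directly at the level $a=b=2$, working with parts $\equiv\pm 2\Mod{d+3}$ and exploiting the gap created by excluding $d+1$.  The paper instead \emph{reduces to the $a=1$ problem}: it proves a shifted Alder-type inequality $\q{d}{1}{n}\geq \Qdash{d-2}{1}{n}$ for $d\geq 31$ (Proposition~\ref{prop:k-2andmlemma}), and then pulls this back to $a=2$ via the elementary comparison lemmas $\q{d}{2}{n}\geq \q{\lceil d/2\rceil}{1}{\lceil n/2\rceil}$ (Lemma~\ref{lem:qstarlemma}) and $\Qdash{d}{2}{2n'}=\Qdash{(d+3)/2-3}{1}{n'}$ (Lemma~\ref{lem:Qidentitylemma}), splitting into four parity cases for $(d,n)$.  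The payoff of the paper's detour is that the heavy combinatorics (Andrews' and Yee's machinery) is invoked only at $a=1$, where it is already in the literature, and the passage to $a=2$ is handled by short bijections; the cost is the threshold $d\geq 62$ rather than the $d\geq 32$ one might hope for from a direct $a=2$ injection.  Your direct approach, if it can be made to work, would be cleaner and might lower the threshold, but you would have to rebuild Yee's argument with the new residue classes and verify that the excluded part $d+1$ really supplies enough slack at every stage---this is plausible but not automatic, and the paper does not attempt it.
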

Kang and Park \cite{kang_analogue_2020} proved Conjecture \ref{conj:KPconj} when $d=2$ or $d=2^s-2$ for any positive integer $s\geq 5$, and positive even integer $n$.  By developing a new way of comparing these partition functions, we prove Kang and Park's conjecture except for the cases $d = 1$ and $3 \leq d \leq 61$\footnote{The \(d = 2\) case is simply the second Rogers-Ramanujan identity.}. 

\begin{theorem}\label{thm:final kp thm}
    For \(d\geq 62\) and $n\geq 1$, 
    \[
        \Ddasha{d}{2}{n} \geq 0.\\
    \]
\end{theorem}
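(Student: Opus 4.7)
The plan is to adapt Yee's injective framework, which resolved Alder's Conjecture for $d \geq 32$, to the Kang-Park setting. The overall goal is to construct an explicit size-preserving injection from the set of partitions counted by $\Qdash{d}{2}{n}$ into the set counted by $\q{d}{2}{n}$.

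First, I would record the product formula
\[
\sum_{n \geq 0} \Qdash{d}{2}{n} \, q^n = \frac{1}{(q^2; q^{d+3})_\infty \, (q^{2d+4}; q^{d+3})_\infty},
\]
obtained by noting that excluding the part $d+1 \equiv -2 \pmod{d+3}$ shifts the smallest allowed part in the $-2$ residue class from $d+1$ up to $2d+4$. This observation is central: term-by-term, the Kang-Park allowed parts are strictly larger than the Alder allowed parts $\equiv \pm 1 \pmod{d+3}$, so Kang-Park partitions should be ``sparser'' than Alder partitions of the same weight, giving extra room to build an injection into gap-$\geq d$ partitions.

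Next, I would construct the injection. Given $\mu$ counted by $\Qdash{d}{2}{n}$, split its parts by residue class modulo $d+3$ into an $\equiv 2$ block and an $\equiv -2$ block. Each block is an arithmetic progression with internal spacing $d+3 \geq d$, so the gap condition holds trivially within a block; the difficulty is merging the two residue streams into a single sequence whose differences remain $\geq d$ and whose parts stay $\geq 2$. The pointwise headroom between Kang-Park and Alder parts, combined with the extra slack produced by forbidding $d+1$, should translate into enough flexibility to perform controlled shifts yielding a valid $\q{d}{2}{n}$ partition. Alternatively, one can use Alder's resolved theorem as a black box by sandwiching $\Qdash{d}{2}{n}$ between $\Q{d}{1}{n}$ and a reference function bounded above by $\q{d}{2}{n}$.

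The main obstacle will be to verify injectivity and image-correctness in the regime of small parts, and to pin down the precise threshold $d \geq 62$. I expect the bound to emerge from a worst-case analysis of interleaved small parts from the two residue classes, where the failure mode for smaller $d$ is that the two blocks collide or get squeezed below $2$ after shifting. The cutoff $d \geq 62 \approx 2 \cdot 32$ is consistent with a rough ``doubling'' of Yee's threshold, reflecting that one now handles two residue classes simultaneously rather than one. For the exceptional range $3 \leq d \leq 61$, the introduction signals that a separate asymptotic treatment adapting Alfes-Jameson-Lemke Oliver is needed; that argument is orthogonal to the combinatorial injection driving the $d \geq 62$ case.
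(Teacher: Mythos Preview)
Your primary plan---a direct injection splitting $\mu$ into two residue streams and merging---is not the paper's route, and as written it has a real gap: the merging step is never constructed. A partition such as $2+2+\cdots+2$ (many copies of $2$) must be sent to a gap-$d$ partition of the same weight with smallest part $\geq 2$, which requires a nontrivial redistribution of mass that your sketch does not supply; Yee's injection does not operate by interleaving residue classes, so ``adapting Yee's framework'' does not automatically hand you this map. Your alternative black-box idea also falls short: one can indeed bound $\Qdash{d}{2}{n} \leq \Q{d}{1}{n}$ via Theorem~\ref{thm:andrews ST}, but Alder's theorem only gives $\Q{d}{1}{n} \leq \q{d}{1}{n}$, whereas you need the strictly smaller $\q{d}{2}{n}$ on the right-hand side.

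The paper instead halves the problem back to $a=1$. For $d$ odd and $n=2n'$ even the chain is
\[
\q{d}{2}{2n'} \;\geq\; \q{\frac{d+1}{2}}{1}{n'} \;\geq\; \Qdash{\frac{d-3}{2}}{1}{n'} \;=\; \Qdash{d}{2}{2n'},
\]
where the first inequality is Lemma~\ref{lem:qstarlemma}, the equality is Lemma~\ref{lem:Qidentitylemma}, and the middle inequality is Proposition~\ref{prop:k-2andmlemma}: a \emph{new} Alder-type statement $\q{k}{1}{m} \geq \Qdash{k-2}{1}{m}$ for $k\geq 31$, proved by re-running the Andrews and Yee machinery against the shifted modulus $k+1$ rather than $k+3$. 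The remaining parity cases ($d$ even, or $n$ odd) require additional comparison steps via Lemma~\ref{lem:STlemma} or are trivial. Your threshold intuition is exactly right---$d\geq 62$ is precisely the condition $\lceil d/2\rceil \geq 31$ needed to invoke Proposition~\ref{prop:k-2andmlemma}---but the engine is this halving reduction together with the modified-Alder proposition, not a direct two-stream injection or a black-box call to the original Alder theorem.
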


It is natural to ask whether Conjecture \ref{conj:KPconj} can be generalized to consider higher \(a = b\). For $1 \leq b \leq d + 2$, define
\begin{align*}
    \Qdash{d}{b}{n} &:= p(n\, | \text{ parts} \equiv \pm b \Mod{d+3}, \text{ excluding the part } d+3 - b), \\
    \Dabdash &:= \qda - \Qdbdash \text{ and } \Ddasha{d}{a}{n} := \Ddash{d}{a}{a}{n}.
\end{align*}
We conjecture the following.
\begin{conjecture}\label{conj:schurconj}
For all $d,n\geq 1$,
\[
    \Ddasha{d}{3}{n} \geq 0.
\]
\end{conjecture}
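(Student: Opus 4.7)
The plan is to establish Conjecture~\ref{conj:schurconj} on infinite families of $(d,n)$ by adapting the two-phase strategy used for Theorem~\ref{thm:final kp thm}: an asymptotic comparison in the spirit of Alfes-Jameson-Lemke Oliver \cite{alfes_proof_2011} to handle all sufficiently large $d$, combined with a combinatorial shift argument for a specific infinite family of $d$ following Andrews \cite{andrews_partition_1971} and Kang-Park \cite{kang_analogue_2020}. The starting point is to record the generating functions. The shift $\lambda_i\mapsto\lambda_i-2$ sending a partition counted by $\q{d}{3}{n}$ with $k$ parts to one counted by $\q{d}{1}{n-2k}$ with $k$ parts gives
\[
\f{3}(q)\;=\;\sum_{k\geq 0}\frac{q^{d\binom{k}{2}+3k}}{(q;q)_k},
\]
and since the parts counted by $\Qdash{d}{3}{n}$ are $\{3,d+6,2d+3,2d+9,3d+6,\dots\}$, we have
\[
\k{3}(q)\;=\;\frac{1}{(q^3;q^{d+3})_\infty\,(q^{2d+3};q^{d+3})_\infty},
\]
so that $\Ddasha{d}{3}{n}$ is the coefficient of $q^n$ in $\f{3}(q)-\k{3}(q)$.

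Next I would execute the asymptotic phase. Adapting the circle-method analysis of \cite{alfes_proof_2011} used to prove Theorem~\ref{thm:final kp thm}, I would derive asymptotics of the form
\[
\q{d}{3}{n}\;\sim\; C_1(d)\,n^{-\alpha_1}\,e^{\lambda_1(d)\sqrt{n}}\quad\text{and}\quad\Qdash{d}{3}{n}\;\sim\; C_2(d)\,n^{-\alpha_2}\,e^{\lambda_2(d)\sqrt{n}},
\]
and then show that $\lambda_1(d)>\lambda_2(d)$ for all $d$ beyond an explicit threshold $d_0$, forcing $\Ddasha{d}{3}{n}\geq 0$ for all sufficiently large $n$ and all such $d$. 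Effective error bounds would reduce the remaining cases to a finite computation for each $d\leq d_0$, which can be dispatched by machine. The combinatorial phase, targeted at the infinite family $d=2^s-3$ with $n$ in a prescribed residue class modulo $d+3$, would iterate a $2$-adic shift following Kang-Park's treatment of $d=2^s-2$ with even $n$, aiming to produce an injection from the partitions counted by $\Qdash{d}{3}{n}$ into those counted by $\q{d}{3}{n}$.

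The main obstacle is the role of the excluded part $d$. Because the excluded part is $d$ rather than $d+1$ as in Kang-Park, the two smallest congruent parts in $\k{3}(q)$ are $3$ and $d+6$, but $d$ simultaneously plays the role of the forbidden size and the required minimum gap; consequently the Andrews-type iteration $d\mapsto 2d+1$ does not descend cleanly, and the pairing between partitions with smallest part exactly $3$ in $\q{d}{3}{n}$ and partitions using the part $3$ in $\Qdash{d}{3}{n}$ must absorb an extra boundary term. I expect this, rather than the asymptotic estimate, to be the technical heart of the argument; in the worst case it will restrict the combinatorial family to residue classes of $n$ complementary to those produced naturally by the shift, so that the full conjecture, even for a single family of $d$, remains out of reach without a genuinely new identity linking $\f{3}(q)$ to $\k{3}(q)$.
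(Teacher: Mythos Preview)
This statement is a \emph{conjecture} in the paper and is not proved there; the only result toward it is Proposition~\ref{prop:schurprop}, which handles the case $3\mid(d+3)$ with $d\geq 93$ (written there as $\Ddasha{3d'}{3}{n}\geq 0$ for $d'\geq 31$, $n\geq d'+6$). That proof is short: when $3\mid(d+3)$ every allowed part is a multiple of $3$, so $\Qdash{d}{3}{n}=0$ unless $3\mid n$, and when $n=3n'$ one applies Lemma~\ref{lem:qstarlemma}, Proposition~\ref{prop:k-2andmlemma}, and Lemma~\ref{lem:Qidentitylemma} to reduce to an Alder-type inequality with $a=1$. No asymptotics are used.

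Your proposal is not a proof, and you say so yourself in the last paragraph; but there is also a structural misreading you should correct. You describe the strategy for Theorem~\ref{thm:final kp thm} as ``asymptotic comparison in the spirit of Alfes--Jameson--Lemke Oliver'' for large $d$ plus a combinatorial argument for a special family. In fact Theorem~\ref{thm:final kp thm} is proved entirely combinatorially, via the same reduction-to-$a=1$ chain (Lemmas~\ref{lem:qstarlemma}, \ref{lem:Qidentitylemma}, Proposition~\ref{prop:k-2andmlemma}) applied case-by-case on the parities of $d$ and $n$; asymptotics appear only in Section~\ref{sec:asymptotics section} as a \emph{proposed} route for the finitely many small $d$ left over. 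Consequently your asymptotic phase, even with effective error terms, would at best show $\Ddasha{d}{3}{n}\geq 0$ for $n\geq N(d)$ with $N(d)$ depending on $d$, leaving infinitely many pairs $(d,n)$ with $d>d_0$ and $n<N(d)$ unaddressed --- not a finite computation.

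Your combinatorial phase takes a genuinely different tack from the paper: you aim at the family $d=2^s-3$ via a $2$-adic shift, whereas the paper exploits $3\mid(d+3)$ to collapse the problem to $a=1$, which sidesteps exactly the excluded-part-equals-gap difficulty you flag. The paper's reduction is cleaner but covers a sparser set of $d$; your approach, if it could be made to work, would cover a different infinite family, but the obstacle you identify (the excluded part $d$ coinciding with the minimum gap) is real and is why no analogue of Proposition~\ref{prop:schurprop} for $d\not\equiv 0\pmod{3}$ appears in the paper. A minor point: the symbols $f_d^{(3)}(q)$ and $k_d^{(3)}(q)$ already carry a different meaning in~\eqref{fkg_defs}; use fresh notation for your generating functions.
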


When $a\geq 4$, $\Daadash$ is not always nonnegative. Surprisingly, excluding just one additional part in the definition of $\Qdadash$ allows us to generalize Conjecture \ref{conj:KPconj} to arbitrary $a$.  For $1 \leq b \leq d + 2$, we define\footnote{For a discussion on why the exclusion of the \(b\) and \(d + 3 - b\) parts are necessary, see Section \ref{sec:data and dash discussion section}.}
\begin{align*}
\Qdbdashdash &:= p(n \,| \text{ parts} \equiv \pm b \Mod{d + 3}, \text{excluding the parts } b \text{ and } d+3-b), \\
\Dabdashdash &:= \qda - \Qdbdashdash \text{ and } \Ddashdasha{d}{a}{n} := \Ddashdash{d}{a}{a}{n},
\end{align*}
and posit the following conjecture.

\begin{conjecture}\label{conj:Generalized KP Conjecture}
Let $a,d$ be positive integers with $1 \leq a \leq d+2$.  Then for all $n \geq 1$,
\[
\Ddashdasha{d}{a}{n} \geq 0. 
\]
\end{conjecture}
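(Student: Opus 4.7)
The plan is to establish Conjecture \ref{conj:Generalized KP Conjecture} for two complementary infinite families of parameters by combining the two main techniques developed earlier in this paper: an injective binary-dissection argument handling an arithmetic family of $d$, and an asymptotic analysis handling all sufficiently large $n$ for each fixed pair $(a,d)$.

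For the arithmetic family, I would extend the Andrews-type binary construction that Kang and Park \cite{kang_analogue_2020} used to settle the $a=2$ case of Conjecture \ref{conj:KPconj} at $d = 2^s - 2$. Given a partition counted by $\Qdashdash{d}{a}{n}$, write each part as $k(d+3) + \epsilon a$ with $\epsilon \in \{\pm 1\}$; crucially, excluding both $a$ and $d+3-a$ forces $k \geq 1$ when $\epsilon = 1$ and $k \geq 2$ when $\epsilon = -1$, which is precisely the slack the construction will need. Expanding each $k$ in base $2$ and dissecting the multiset of parts level by level should produce a target partition with difference at least $d$ and smallest part at least $a$; the identity $d + 3 = 2^s + 1$ is what makes the binary carries behave predictably under multiplication by $d+3$. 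For the asymptotic family, I would adapt the Meinardus / circle-method analysis from the proof of Theorem \ref{thm:final kp thm} and \cite{alfes_proof_2011}. The generating function
\[
\sum_{n\geq 0} \Qdashdash{d}{a}{n}\, q^n = \frac{(1-q^a)(1-q^{d+3-a})}{\poch{a}{d+3}\poch{d+3-a}{d+3}}
\]
differs from its $a=2$ analogue only by elementary polynomial factors, so the same saddle-point framework should yield $\Ddashdasha{d}{a}{n} \geq 0$ for all $n \geq N_0(a,d)$; the finitely many remaining small $n$ per $(a,d)$ can be handled by direct computation, in the spirit of \cite{alfes_proof_2011}.

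The main obstacle will be the first step. Andrews' original binary construction is tailored to the image having smallest part $1$, whereas here we need smallest part $a$; a naive shift of every part up by $a-1$ would violate the gap condition at the low end. The second exclusion --- that of the part $a$ itself --- should provide exactly the room needed to absorb this shift, but verifying that the carries in the binary expansion interact correctly with both this shift and with the residue classes $\pm a \pmod{d+3}$ will require a delicate case analysis that genuinely generalizes, rather than merely mimics, the $a=2$ argument of Kang and Park. A secondary difficulty, relevant when $a$ is close to $(d+3)/2$, is that the two exclusions $a$ and $d+3-a$ begin to coincide, so the bookkeeping must degenerate gracefully at the boundary $d = 2a-3$.
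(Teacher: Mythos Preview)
The statement you are addressing is a \emph{conjecture} in the paper, not a theorem; the paper does not prove it in full. What the paper does is give four partial results (Theorems \ref{thm:stringarbitrary}, \ref{thm:andrewsarbitrary}, \ref{thm:yeearbitrary}, and \ref{thm:asymptotic result}), none of which, even combined, covers all $(a,d,n)$. So your proposal should be read not as an alternative proof of the conjecture, but as an alternative package of partial results, and compared on that basis.

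Your description of the two pieces as ``complementary'' is the main gap. The asymptotic piece gives, for each fixed $(a,d)$, an effective $N_0(a,d)$ beyond which the inequality holds, leaving finitely many $n$ to check per pair; but there are infinitely many admissible pairs $(a,d)$, so ``direct computation for the finitely many remaining small $n$'' is not a proof step --- it is an infinite task. The binary-dissection piece, as you have set it up with $d+3 = 2^s + 1$, targets only the one-parameter family $d = 2^s - 2$, which is much too sparse to absorb the leftover cases from the asymptotic argument. So the two families are not complementary in any sense that would yield the full conjecture; you are left with exactly the same kind of residual open territory the paper has.

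On the techniques themselves: your asymptotic piece is essentially the paper's Theorem \ref{thm:asymptotic result}, which already establishes $\Daa \to +\infty$ (hence $\Daadashdash \to +\infty$) under the coprimality hypothesis $\gcd(a,d+3)=1$ and $a < (d+3)/2$; note that this hypothesis is genuinely needed for the Meinardus asymptotic \eqref{eq:Andrews_Q}, and your sketch does not address what happens when $\gcd(a,d+3) > 1$. For the combinatorial piece, the paper's Theorem \ref{thm:andrewsarbitrary} already handles the broader family $d = 2^s - 2^t$ with $t \geq \log_2 a$, but only for $n$ divisible by $2^t$; your proposed target $d = 2^s - 2$ is the case $t=1$, which for $a \geq 3$ does not satisfy $t \geq \log_2 a$, so you would indeed be proving something the paper does not --- provided the construction can be made to land in partitions with smallest part $\geq a$. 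You correctly identify this as the crux, and the paper's approach (Lemma \ref{lem:STlemma} and Corollary \ref{cor:STgeneral}, which require the target set to have smallest element a specific $2^t$ dividing all others) suggests why pushing $t$ below $\log_2 a$ is not straightforward: the divisibility structure that makes the injection work collapses. The paper sidesteps this by instead proving Theorem \ref{thm:stringarbitrary} via a reduction to $a=1$ (Lemmas \ref{lem:qstarlemma} and \ref{lem:Qidentitylemma}), which handles all $d \geq 31a-3$ with $a \mid d+3$ and all $n$, a genuinely different and in practice more productive route than trying to force the binary carries to respect a lower bound of $a$.
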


\noindent We observe that Conjecture \ref{conj:Generalized KP Conjecture} is true when $a = 1$ and $d,n \geq 1$ by \eqref{eq:alderthm}.  Moreover, when $a = 2$, $d \geq 62$, and $n \geq 1$, Conjecture \ref{conj:Generalized KP Conjecture} follows from Theorem \ref{thm:final kp thm}.  

We now state three partial results toward a proof of Conjecture \ref{conj:Generalized KP Conjecture} for $a \geq 3$.  The first uses methods from the proof of Theorem \ref{thm:final kp thm}.

\begin{theorem} \label{thm:stringarbitrary}
Let $a \geq 3$, and $d\geq 31a - 3$, such that $a$ divides $d+3$. Then for all $n \geq 1$, 
\[ 
\Ddashdasha{d}{a}{n} \geq 0. 
\]
\end{theorem}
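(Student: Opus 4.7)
The plan is to exploit the hypothesis $a\mid d+3$ to reduce to an Alder-type inequality, and then to adapt the comparison technique from the proof of Theorem \ref{thm:final kp thm}. Write $d+3=am$, so that the hypothesis $d\geq 31a-3$ becomes $m\geq 31$. The parts $\equiv\pm a\pmod{am}$ are exactly the multiples $\{a,\;a(m-1),\;a(m+1),\;a(2m-1),\;a(2m+1),\ldots\}$ of $a$, from which $a$ and $a(m-1)=d+3-a$ are excluded. Hence every partition counted by $\Qdashdash{d}{a}{n}$ consists entirely of multiples of $a$, so $\Qdashdash{d}{a}{n}=0$ unless $a\mid n$; and for $n=an'$, dividing every part by $a$ yields the bijection
\[
\Qdashdash{d}{a}{an'}\;=\;\Qdashdash{m-3}{1}{n'}.
\]

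Two standard comparisons now reduce the target inequality. First, $\Qdashdash{m-3}{1}{n'}\leq \Q{m-3}{1}{n'}$ trivially (excluded parts only decrease the count), and second, $\Q{m-3}{1}{n'}\leq \q{m-3}{1}{n'}$ by Alder's theorem \eqref{eq:alderthm}, which applies since $m-3\geq 28\geq 1$. It therefore suffices to establish the key inequality
\[
\q{d}{a}{an'}\;\geq\;\q{m-3}{1}{n'}\qquad\text{for all }n'\geq 1,
\]
which combined with the reduction above implies $\qda\geq\Qdashdash{d}{a}{n}$ and hence $\Ddashdasha{d}{a}{n}\geq 0$. The natural candidate for proving the key inequality is the stretching map $\mu_i\mapsto a\mu_i$: this sends a partition of $n'$ with parts $\geq 1$ and gap $\geq m-3$ to one of $an'$ with parts $\geq a$ but only gap $\geq a(m-3)=am-3a$, falling short of the required $am-3=d$ by $3(a-1)$ at each step. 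Adapting the comparison-injection developed in the proof of Theorem \ref{thm:final kp thm}, I would correct this deficit by a shift-and-redistribute construction: set $\lambda_i:=a\mu_i+t_i$ for offsets satisfying $t_i-t_{i+1}\geq 3(a-1)$ and $\sum_i t_i=0$, handling the constraint $\lambda_k\geq a$ via a case split on the number of parts $k$ and the size of the smallest part $\mu_k$, and absorbing any mass shortfall into the largest part.

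The principal obstacle is the construction of this injection and the verification that it remains well-defined across all case splits. The bound $d\geq 31a-3$ (equivalently $m\geq 31$) should turn out to be precisely the threshold guaranteeing enough room to absorb the $3(a-1)$-per-step slack without violating either the floor $\lambda_k\geq a$ or the gap $\lambda_i-\lambda_{i+1}\geq d$, paralleling the role of $d\geq 62$ in the $a=2$ case of Theorem \ref{thm:final kp thm}. All other steps reduce cleanly either to immediate bijections or to Alder's theorem, so the entirety of the genuinely $a$-dependent combinatorics is concentrated in this injection.
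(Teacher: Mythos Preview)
Your reduction via $a\mid d+3$ and the bijection $\Qdashdash{d}{a}{an'}=\Qdashdash{m-3}{1}{n'}$ is correct and matches the paper (Lemma~\ref{lem:Qidentitylemma}). The gap is in the next step: the key inequality you set out to prove,
\[
\q{d}{a}{an'}\;\geq\;\q{m-3}{1}{n'},
\]
is \emph{false}. Take $a=3$, $m=31$ (so $d=90$, the boundary case $d=31a-3$) and $n'=32$: then $\q{28}{1}{32}=3$ (the partitions $(32)$, $(1,31)$, $(2,30)$) while $\q{90}{3}{96}=2$ (only $(96)$ and $(3,93)$). No shift-and-redistribute injection can exist here; indeed your offsets with $\sum_i t_i=0$ and $t_i-t_{i+1}\geq 3(a-1)$ force $t_k<0$ whenever there are $k\geq 2$ parts, which already kills the floor $\lambda_k\geq a$ when $\mu_k=1$, and the counterexample shows this is not a boundary artifact one can case-split away.

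The source of the failure is the step $\Qdashdash{m-3}{1}{n'}\leq\Q{m-3}{1}{n'}$: by passing to plain Alder you discard exactly the slack (the excluded parts $1$ and $m-1$) that makes the theorem true. The paper keeps the double dash and instead proves a \emph{modified} Alder inequality (Proposition~\ref{prop:k-3andmdashdashlemma}): for $m\geq 31$,
\[
\q{m}{1}{n'}\;\geq\;\Qdashdash{m-3}{1}{n'}.
\]
Note the gap parameter on the left is $m$, not $m-3$. This pairs with the easy stretching bound $\q{d}{a}{an'}\geq\q{\lceil d/a\rceil}{1}{n'}$ of Lemma~\ref{lem:qstarlemma}, where $\lceil d/a\rceil=m$ for $a\geq 4$ and $=m-1$ for $a=3$ (in which case one inserts the trivial $\q{m-1}{1}{n'}\geq\q{m}{1}{n'}$). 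So the genuine work, and the reason for the threshold $m\geq 31$, sits in the modified Alder comparison via the Andrews--Yee machinery, not in an $a$-dependent injection on the $q$-side.
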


\noindent The next two results require the following definition.  Given fixed $a,d\geq 1$, define $r$ (dependent on $d$ and $a$) to be the largest integer such that 
\begin{equation}\label{eq:r def}
2^r - 2^{a-1} \leq d.
\end{equation}
By generalizing methods of Andrews \cite{andrews_partition_1971}, we prove the following, which generalizes results of Andrews \cite[Theorem 4]{andrews_partition_1971} as well as Kang and Park \cite[Theorem 1.3]{kang_analogue_2020}.  

\begin{theorem} \label{thm:andrewsarbitrary}
Suppose $1 \leq a \leq d+2$, where $d = 2^{s}-2^{t}$, with $t \geq \log_2(a)$ and $s \geq t + 4$.  Then for all $n\geq 1$, 
\[
\Ddashdasha{d}{a}{2^t n} \geq 0.
\]
\end{theorem}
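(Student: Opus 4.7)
The plan is to extend Andrews' original argument for Alder's conjecture at $d = 2^r-1$, and its Kang--Park refinement for $d = 2^s - 2$, to the two-parameter family $d = 2^s - 2^t$. The guiding principle is to produce an intermediate partition function $\mathcal{E}_d^{(a)}(n)$, defined via the binary structure of $d$, satisfying the sandwich
\[
\q{d}{a}{2^t n} \;\geq\; \mathcal{E}_d^{(a)}(2^t n) \;\geq\; \Qdashdash{d}{a}{2^t n} \qquad \text{for all } n \geq 1,
\]
which immediately yields the theorem.

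I would first record the standard generating functions
\[
\sum_{n \geq 0} \qda\, q^n = \sum_{k \geq 0} \frac{q^{d\binom{k}{2} + ak}}{(q;q)_k}, \qquad \sum_{n \geq 0} \Qdadashdash\, q^n = \frac{(1-q^a)(1-q^{d+3-a})}{\poch{a}{d+3}\,\poch{d+3-a}{d+3}},
\]
and then define $\mathcal{E}_d^{(a)}(n)$ to count partitions of $n$ into distinct parts drawn from a set tailored to $d = 2^s - 2^t$. Following Andrews, this set should consist of \textbf{binary building blocks} of the form $a \cdot 2^j$ for $t \leq j < s$, together with selected parts congruent to $\pm a \Mod{d+3}$ of size exceeding $a \cdot 2^s$. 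These choices are made so that the generating function of $\mathcal{E}_d^{(a)}$ telescopes by way of the classical identity
\[
\prod_{j=t}^{s-1} (1 + q^{a \cdot 2^j}) = \frac{1 - q^{a \cdot 2^s}}{1 - q^{a \cdot 2^t}}.
\]

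The upper bound $\mathcal{E}_d^{(a)}(2^t n) \geq \Qdashdash{d}{a}{2^t n}$ will follow by factoring the generating function of $\mathcal{E}_d^{(a)}$ as the generating function of $\Qdadashdash$ times a formal power series with nonnegative coefficients, with the telescoping identity providing the crucial cancellation. The restriction to $2^t n$ enters here because every building block lies in $a \cdot 2^t \Z$; at coefficients of $q^m$ with $2^t \nmid m$ the telescoping identity degenerates. For the lower bound $\q{d}{a}{2^t n} \geq \mathcal{E}_d^{(a)}(2^t n)$, I would construct an explicit injection: each occurrence of a building block $a \cdot 2^j$ in an $\mathcal{E}_d^{(a)}$-partition is replaced by a chain of parts of size $\geq a$ with successive gaps $\geq d$. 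The hypothesis $s \geq t+4$ guarantees at least four consecutive binary slots $a \cdot 2^t, \ldots, a \cdot 2^{s-1}$, mirroring Andrews' original condition $r \geq 4$ and supplying enough room to perform these replacements without collision.

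The hardest step will be the explicit injection (equivalently, showing that the generating-function difference $\sum_n (\qda - \mathcal{E}_d^{(a)}(n))\,q^n$ has nonnegative coefficients at $q^{2^t n}$). When $a=1$ this reduces to Andrews' binary-expansion argument, and when $a=2,\, t=1$ to that of Kang and Park; for general $a \geq 3$ one must simultaneously track the difference condition, the minimum-part condition $\geq a$, and the exclusion of \emph{both} parts $a$ and $d+3-a$ from $\Qdadashdash$. The hypothesis $t \geq \log_2(a)$ (i.e.\ $2^t \geq a$) is precisely what makes the tracking feasible: it ensures that the smallest building block $a \cdot 2^t$ is of size at least $a^2$, so that after splitting it into a chain of parts each of size $\geq a$ with gaps $\geq d$, the result still lies in the admissible range. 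Verifying this through a careful case analysis on which building blocks appear in a given $\mathcal{E}_d^{(a)}$-partition will constitute the bulk of the proof.
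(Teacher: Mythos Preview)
Your plan has the right skeleton (sandwich by an intermediate count built from binary data, then telescope), but the specific choices you make diverge from what actually works, and the step you yourself flag as hardest is a genuine gap.

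The intermediate object in the paper is not built from blocks $a\cdot 2^j$; it is $D(A_{d,t,s};n)$, counting partitions of $n$ into \emph{distinct} parts congruent to $2^t,2^{t+1},\dots,2^{s-1}$ modulo $d$. The telescoping is at the level of infinite products,
\[
(-q^{2^i};q^d)_\infty=\frac{(q^{2^{i+1}};q^{2d})_\infty}{(q^{2^i};q^{2d})_\infty\,(q^{d+2^i};q^{2d})_\infty},
\]
which collapses $\prod_{i=t}^{s-1}(-q^{2^i};q^d)_\infty$ to $\rho(T_{t,s,d};n)$ with $T_{t,s,d}=\{y\equiv 2^t,\,d+2^{t+1},\dots,d+2^{s-1}\pmod{2d}\}$. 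Every element of $T_{t,s,d}$ is divisible by $2^t$ because $2^t\mid d$; that, not ``$a\cdot 2^t$ divides each building block,'' is why the statement lives at $2^t n$. The upper bound $\rho(T_{t,s,d};2^tn)\ge Q_d^{(a,-,-)}(2^tn)$ is then obtained not by factoring generating functions but by ordering $T_{t,s,d}$ and $S_{a,d}=\{x\equiv\pm a\pmod{d+3}\}\setminus\{a,d+3-a\}$ by size, checking $y_i\le x_i$ termwise (this is where $s\ge t+4$ enters, via Corollary~\ref{cor:STgeneral}), and applying the $m$-divisible comparison Lemma~\ref{lem:STlemma} with $m=2^t$.

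The lower bound $q_d^{(a)}(2^tn)\ge D(A_{d,t,s};2^tn)$ is the crux, and the paper does \emph{not} build an ad~hoc injection. It invokes Andrews' 1969 identity $D(A_{d,t,s};n)=E(A'_{d,t,s};n)$ (Theorem~\ref{thm:andrewsDandEsets}) and then simply observes that every $E(A'_{d,t,s};\cdot)$-partition already has smallest part $\ge 2^t\ge a$ and gaps $\ge d$, hence is counted by $q_d^{(a)}$. Your proposed replacement---turn each block $a\cdot 2^j$ into a chain of parts $\ge a$ with gaps $\ge d$---does not work as stated: for $j$ near $t$ one has $a\cdot 2^j\le 2^{2t}$, while $d\ge 15\cdot 2^t$, so a single block is far too small to accommodate even one gap of size $d$, and you give no mechanism for injectivity across blocks. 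Without Andrews' theorem (or a substitute of comparable strength) this step remains open, and the ``careful case analysis'' you promise is really a nontrivial theorem that the paper imports as a black box.
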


\noindent By generalizing methods of Yee \cite{yee_alders_2008},  We futher prove the following.

\begin{theorem} \label{thm:yeearbitrary}
Let $a\geq 3$, and $d=2^{a-1}m$ where $m\geq 31$ and $m \not= 2^{r-a+1} - 1$. Then for all $n \geq 2m + 2^{r-a+1}+1$, 
\[
\Ddashdasha{d}{a}{2^{a-1}n} \geq 0.
\]   
\end{theorem}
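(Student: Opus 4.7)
The plan is to adapt Yee's injection-based proof of Alder's conjecture from \cite{yee_alders_2008}: first reduce the claim at modulus $d$ to the nearby modulus $d_0$ that is handled by Theorem~\ref{thm:andrewsarbitrary}, and then build a mass-preserving combinatorial bridge from $d_0$ back to $d$.

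Set $d_0 := 2^r - 2^{a-1}$. By the maximality of $r$ we have $2^{r-a+1} \leq m+1$, and the assumption $m \neq 2^{r-a+1}-1$ upgrades this to $2^{r-a+1} \leq m$, so $d_0 = 2^{a-1}(2^{r-a+1}-1) \leq d - 2^{a-1}$. Writing $d_0 = 2^s - 2^t$ with $s = r$ and $t = a-1$, we have $t = a-1 \geq \log_2 a$ for $a \geq 3$ (since $2^{a-1} \geq a$), and $s - t = r - a + 1 \geq 5$ because $m \geq 31$ forces $2^{r-a+1} \geq 32$. Theorem~\ref{thm:andrewsarbitrary} therefore yields
\[
\Delta_{d_0}^{(a,-,-)}(2^{a-1} n') \geq 0 \qquad \text{for every } n' \geq 1.
\]

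The second step is to construct an explicit injection $\Phi$ from the partitions enumerated by $Q_d^{(a,-,-)}(2^{a-1} n)$ into those enumerated by $q_d^{(a)}(2^{a-1} n)$. Given such a partition $\lambda$ with parts of the form $k_j(d+3) + \epsilon_j a$ (where $\epsilon_j \in \{\pm 1\}$ and $(k_j,\epsilon_j) \notin \{(0,+1),(1,-1)\}$), I group parts into blocks of common index $k$ and transform each block into a sequence of parts with successive difference at least $d$, using the slack $d - d_0 = 2^{a-1}(m - 2^{r-a+1} + 1) > 0$ as the flexible gap that absorbs repeated parts. More precisely, I shrink $\lambda$ block by block to a $Q_{d_0}^{(a,-,-)}$-partition of a smaller integer, invoke Step~1 together with the injection coming from its proof to produce a $q_{d_0}^{(a)}$-partition, and finally re-inflate each part by the appropriate multiple of $d - d_0$ to both restore the original total $2^{a-1} n$ and enforce the difference-$d$ condition.

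The main obstacle is verifying that $\Phi$ is well-defined, injective, preserves the total mass $2^{a-1} n$, and lands inside $q_d^{(a)}$ (parts $\geq a$ with successive differences $\geq d$). This requires a careful case analysis on the block structure of $\lambda$, modelled closely on Yee's argument in \cite{yee_alders_2008}. The threshold $n \geq 2m + 2^{r-a+1} + 1$ is exactly what is needed so that $2^{a-1} n$ is large enough to accommodate the spread-out image partition in the extremal case where $\lambda$ consists mostly of its smallest allowed parts; the assumption $m \neq 2^{r-a+1}-1$ ensures $d_0 < d$ strictly, leaving positive room for the bridging map to operate.
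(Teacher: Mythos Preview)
Your plan has a genuine gap in Step~2 that appears unfixable as stated. The ``shrink--inject--inflate'' scheme breaks because the injection underlying Theorem~\ref{thm:andrewsarbitrary} does not track the data you need to re-inflate. Concretely: shrinking a part $k(d+3)+\varepsilon a$ to $k(d_0+3)+\varepsilon a$ loses mass $k(d-d_0)$, so different $\lambda$'s counted by $Q_d^{(a,-,-)}(2^{a-1}n)$ shrink to $Q_{d_0}^{(a,-,-)}$-partitions of \emph{different} integers $2^{a-1}n'_\lambda$. You then apply the Theorem~\ref{thm:andrewsarbitrary} injection at each of these separate levels. That injection passes through Andrews's identity $D(A_{d_0,a-1,r};\,\cdot\,)=E(A'_{d_0,a-1,r};\,\cdot\,)$ and a set comparison via Corollary~\ref{cor:STgeneral}; the resulting $q_{d_0}^{(a)}$-partition $\mu$ retains no memory of the indices $k_j$ from $\lambda$. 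Without those indices there is no canonical ``appropriate multiple of $d-d_0$'' to add to each part of $\mu$ that simultaneously (i) restores the exact total $2^{a-1}n$, (ii) upgrades all gaps from $\geq d_0$ to $\geq d$, and (iii) yields a globally injective map across the different levels $n'_\lambda$. Any ad hoc choice (e.g.\ adding $(i-1)(d-d_0)$ to the $i$th part) fixes (ii) but fails (i), since the mass deficit $\sum_j k_j(d-d_0)$ depends on $\lambda$, not on the number of parts of $\mu$.

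This is also not what Yee actually does, so the appeal to ``modelled closely on Yee's argument'' is misplaced. The paper's proof works entirely at modulus $d$ and never reduces to $d_0$ or to Theorem~\ref{thm:andrewsarbitrary}. It establishes the chain
\[
q_d^{(a)}(2^{a-1}n)\ \geq\ \mathcal{L}_d^{(a)}(2^{a-1}n)+\mathcal{L}_d^{(a)}(2^{a-1}n-2^r)\ \geq\ \mathcal{K}_d^{(a)}(2^{a-1}n)\ \geq\ \mathcal{G}_d^{(a)}(2^{a-1}n)\ \geq\ Q_d^{(a,-,-)}(2^{a-1}n),
\]
where $\mathcal{L},\mathcal{K},\mathcal{G}$ are explicit auxiliary generating functions. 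The first inequality (Lemma~\ref{yee2.7analog}) is a delicate piecewise injection $Y_d^{(a)}(n-2^r)\hookrightarrow X_d^{(a)}(n)\setminus Y_d^{(a)}(n)$ generalising Yee's, and it is \emph{this} lemma that produces the threshold $2^{a-1}n\geq 2d+2^r+2^{a-1}-1$, i.e.\ $n\geq 2m+2^{r-a+1}+1$; the second inequality is a generating-function positivity argument; the third (Lemma~\ref{Lemma2.2analog}) is a sign-reversing involution; the last is Corollary~\ref{cor:STgeneral}. The hypotheses $m\geq 31$ and $m\neq 2^{r-a+1}-1$ enter in the last step and in Lemmas~\ref{yee2.7analog}--\ref{Lemma2.2analog} respectively, not as ``room for a bridging map.'' To repair your proposal you would essentially have to rebuild these intermediate functions; the shortcut through $d_0$ does not survive.
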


Finally, we investigate and generalize asymptotic results of Andrews \cite{andrews_partition_1971}, as well as Alfes, Jameson, and Lemke Oliver \cite{alfes_proof_2011}. 

\begin{theorem}\label{thm:asymptotic result}
Let $a\geq 1$ and $d \geq 4$ such that $a < \frac{d + 3}{2}$ and $\gcd(a,d+3)=1$. Then for all $n\geq 1$,
\[
\lim_{n \to \infty} \Daadashdash = + \infty.
\]
\end{theorem}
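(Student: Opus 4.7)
The plan is to derive Meinardus-type asymptotic formulas for $\qda$ and $\Qdadashdash$ separately and then show that the exponential growth rate of the former strictly exceeds that of the latter whenever $d\ge 4$, following the overall strategy of Alfes, Jameson, and Lemke Oliver \cite{alfes_proof_2011} and Andrews \cite{andrews_partition_1971}.

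The generating function of $\Qdadashdash$ is the infinite product
\begin{equation*}
\sum_{n\ge 0}\Qdadashdash\,q^n=(1-q^a)(1-q^{d+3-a})\prod_{\substack{m\ge 1 \\ m\equiv\pm a\,\Mod{d+3}}}\frac{1}{1-q^m}.
\end{equation*}
The hypothesis $a<(d+3)/2$ keeps the two excluded parts $a$ and $d+3-a$ distinct, and $\gcd(a,d+3)=1$ guarantees that the associated Dirichlet series $D(s)=\sum m^{-s}$ over admissible $m$ has a simple pole at $s=1$ of residue $2/(d+3)$ and extends meromorphically into a left half-plane, as required by Meinardus's hypotheses. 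Meinardus's theorem then yields
\begin{equation*}
\Qdadashdash\sim C_1(a,d)\,n^{-3/4}\exp\!\left(2\pi\sqrt{\tfrac{n}{3(d+3)}}\right).
\end{equation*}

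For $\qda$ I would extend the saddle-point analysis used in \cite{andrews_partition_1971, alfes_proof_2011} for $a=1$ to the Rogers--Ramanujan-type sum
\begin{equation*}
\sum_{n\ge 0}\qda\,q^n=\sum_{k\ge 0}\frac{q^{ak+d\binom{k}{2}}}{(q;q)_k}.
\end{equation*}
Setting $q=e^{-t}$ with $t\to 0^+$, the dominant index $k(t)$ is of order $1/t$, and replacing the linear weight $q^{k}$ by $q^{ak}$ shifts only sub-exponential factors. This produces
\begin{equation*}
\qda\sim C_2(a,d)\,n^{-3/4}\exp(\gamma_d\sqrt{n}),
\end{equation*}
where $\gamma_d$ is Meinardus's growth constant for difference-$d$ partitions, expressible through dilogarithms.

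Combining the two asymptotics with the strict inequality $\gamma_d>2\pi/\sqrt{3(d+3)}$, valid for every $d\ge 3$ and with equality exactly at $d\in\{1,2\}$ (which reproduces Euler's identity and the first Rogers--Ramanujan identity), gives
\begin{equation*}
\Daadashdash\sim C_2(a,d)\,n^{-3/4}\exp(\gamma_d\sqrt{n})\longrightarrow+\infty.
\end{equation*}
The main obstacle is carrying out the saddle-point analysis of $\qda$ uniformly in the minimum-part parameter $a$: one must verify that imposing the constraint ``smallest part $\ge a$'' perturbs only polynomial (and not exponential) factors in the asymptotic. Once this is established, the dilogarithm inequality $\gamma_d>2\pi/\sqrt{3(d+3)}$ for $d\ge 3$ is a classical estimate, and the conclusion is immediate.
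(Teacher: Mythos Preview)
Your strategy---compare the exponential growth rates of $\qda$ and of the $Q$-side and invoke the strict inequality between them---is exactly the route the paper takes. The execution differs only in bookkeeping, and the paper's version is more economical for two reasons you should know about.

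First, the step you flag as ``the main obstacle'' is already a theorem in the literature: Meinardus \cite{meinardus_uber_1954} proved the asymptotic
\[
\qda \sim C(d,a)\,n^{-3/4}\exp\bigl(2\sqrt{An}\bigr)
\]
for general $a$, with $A$ depending only on $d$ (via the real root $\alpha$ of $\alpha^d+\alpha-1=0$). So there is no need to redo a saddle-point analysis; the fact that the smallest-part condition $\ge a$ affects only the polynomial prefactor is precisely what Meinardus established. The paper simply cites this. Likewise, the key inequality you call a ``classical estimate''---in the paper's notation $A>\pi^2/(3d+9)$---is proved by Andrews in \cite[proof of Theorem~2]{andrews_partition_1971}, and the paper cites that too.

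Second, rather than applying Meinardus directly to $\Qdadashdash$ (which, as you note, works since excluding two parts leaves the Dirichlet residue unchanged), the paper uses the trivial inequality $\Daadashdash\ge\Daa$ from \eqref{rem:thing} to reduce to $\Qda$, whose asymptotic is quoted straight from Andrews's textbook \cite[Example~1, p.~97]{andrews_theory_1976}. This is where the hypotheses $a<(d+3)/2$ and $\gcd(a,d+3)=1$ are actually used. Your direct approach to $\Qdadashdash$ is fine, but the reduction saves a verification of Meinardus's hypotheses.

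In short: your outline is correct and matches the paper's, but what you describe as obstacles are already published theorems of Meinardus and Andrews; citing them turns the proof into a few lines.
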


We note that it is useful to keep in mind the following observation.  For $1 \leq a \leq d+2$ and $n\geq 1$,
\begin{equation} \label{rem:thing}
\Ddashdasha{d}{a}{n} \geq \Ddasha{d}{a}{n} \geq \Da{d}{a}{n}.
\end{equation}

We now outline the rest of the paper.  In Section \ref{sec:preliminaries section}, we generalize a theorem of Andrews \cite{andrews_partition_1971} and establish some other useful lemmas. In Section \ref{sec:modified alders section}, we prove two modified versions of Alder's conjecture that will be used in the proofs of Theorem \ref{thm:final kp thm} and Theorem \ref{thm:stringarbitrary}. In Section \ref{sec:kang and park conjecture section}, we prove Theorem \ref{thm:final kp thm} by considering four cases based on the parities of $n$ and $d$, and present a partial proof of Conjecture \ref{conj:schurconj} in Proposition \ref{prop:schurprop}. In Section \ref{sec:generalized kang and park conjecture section}, we modify methods from the proof Theorem \ref{thm:final kp thm} to prove Theorem \ref{thm:stringarbitrary}, adapt methods of Andrews \cite{andrews_partition_1971} to prove Theorem \ref{thm:andrewsarbitrary}, and generalize methods of Yee \cite{yee_alders_2008} to prove Theorem \ref{thm:yeearbitrary}. In Section \ref{sec:asymptotics section}, we prove Theorem \ref{thm:asymptotic result} as well as analog a method of Alfes, Jameson, and Lemke Oliver \cite{alfes_proof_2011} to obtain an asymptotic expression for $\Daa$ with explicit error term.  We then discuss how asymptotics could be useful to prove remaining cases of Conjecture \ref{conj:KPconj}. Finally, in Section \ref{sec:data and dash discussion section}, we conclude with some remarks about our choice of definitions.

\section{Preliminaries}\label{sec:preliminaries section}

In this section, we first develop generating functions for our partition functions, and then establish some important lemmas that we will employ in later sections.   As in Yee \cite{yee_alders_2008}, we occasionally denote the coefficient of $q^n$ in an infinite series $s(q)$ as $[q^n](s(q))$.

For $a,d\geq 1$, the generating function for $\qda$ is
\begin{equation}
\sum_{n = 0}^{\infty} \qda q^n = \sum_{k = 0}^{\infty} \frac{q^{d{k \choose 2} + ka}}{(q;q)_{k}}. \label{eq:qdagenfunc}
\end{equation}
Equation \eqref{eq:qdagenfunc} appears in work of Alder \cite{alder_nonexistence_1948}, and the case where \(a = 1\) is proved combinatorially in \cite{lehmer_two_1946}, which can be easily generalized to $a\geq 1$.

As stated in Alfes, Jameson, and Lemke Oliver \cite{alfes_proof_2011}, the generating function for $\Qdb$ is
\begin{equation}\label{eq:Qdbgenfunc}
\sum_{n = 0}^{\infty} \Qdb q^n = \frac{1}{(q^{d + 3 - b};q^{d + 3})_{\infty}(q^{b};q^{d + 3})_{\infty}}. 
\end{equation}
From \eqref{eq:Qdbgenfunc} we easily obtain the generating function for $\Qdbdash$ by removing the term that generates the part $d + 3 - b$.  For $1\leq b \leq d+2$, the generating function for $\Qdbdash$ is thus given by
\begin{equation}
\sum_{n = 0}^{\infty} \Qdbdash q^n =
\begin{cases}
\frac{1}{(q^{2d + 6 - b};q^{d + 3})_{\infty}} &\text{ for } b = \frac{d + 3}{2},\\
\frac{1}{(q^{2d + 6 - b};q^{d + 3})_{\infty}(q^{b};q^{d + 3})_{\infty}} &\text{ otherwise.} \label{eq:Qdbdashgenfunc}
\end{cases}
\end{equation}
Similarly we can construct the generating function for $\Qdbdashdash$, which for $1\leq b \leq d+2$, is given by
\begin{equation}\label{eq:Qdbdashdashgenfunc}
\sum_{n = 0}^{\infty} \Qdbdashdash q^n =
\begin{cases}
\frac{1}{(q^{2d + 6 - b};q^{d + 3})_{\infty}} &\text{ for } b = \frac{d + 3}{2}, \\
\frac{1}{(q^{2d + 6 - b};q^{d + 3})_{\infty}(q^{d + 3 + b};q^{d + 3})_{\infty}} &\text{ otherwise.}\\ 
\end{cases}
\end{equation}

Andrews \cite{andrews_partition_1971} uses the following theorem in his proof of certain cases of Alder's conjecture.  Write
\[
\rho(R;n):= p(n \mid \text{parts are from the set } R).
\]

\begin{theorem}[Andrews \cite{andrews_partition_1971}, 1971 ]\label{thm:andrews ST}
Let $S = \{x_i\}_{i = 1}^{\infty}$ and $T = \{y_i\}_{i = 1}^{\infty}$ be two strictly increasing sequences of positive integers such that $y_1 = 1$ and $x_i \geq y_i$ for all $i$. Then for all $n >0$,
\[
\rho(T;n) \geq \rho(S;n).
\]
\end{theorem}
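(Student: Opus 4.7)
The plan is to prove Theorem \ref{thm:andrews ST} by constructing an explicit injection $\Phi$ from partitions of $n$ with parts in $S$ to partitions of $n$ with parts in $T$. The hypotheses are perfectly tailored to such a construction: the inequality $x_i \geq y_i$ provides a nonnegative ``excess'' $x_i - y_i$ at each index, and $y_1 = 1$ guarantees that the part $1$ lies in $T$ and can absorb any such excess without changing the total.

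Given a partition $\lambda$ of $n$ with parts in $S$, let $m_i$ denote the multiplicity of $x_i$ in $\lambda$, so $n = \sum_{i \geq 1} m_i x_i$. Define $\Phi(\lambda)$ to be the partition with multiplicity $m_j$ of $y_j$ for each $j \geq 2$, and multiplicity $m_1 + \sum_{i \geq 1} m_i(x_i - y_i)$ of $1 = y_1$; equivalently, replace each $x_i$-part of $\lambda$ by $y_i$, then pad with copies of $1$ until the sum returns to $n$. A direct computation using $y_1 = 1$ shows that $\Phi(\lambda)$ has total sum $n$, and by construction all its parts lie in $T$, so $\Phi(\lambda)$ is a partition counted by $\rho(T;n)$.

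Injectivity is the crux. Since $T$ is strictly increasing, $y_j > 1$ for all $j \geq 2$; the padding $1$'s cannot account for any occurrence of such a $y_j$ in $\Phi(\lambda)$, so the multiplicity of $y_j$ in $\Phi(\lambda)$ recovers $m_j$ exactly. Having determined $m_j$ for all $j \geq 2$, the value of $m_1$ is then forced by the constraint $n = \sum_i m_i x_i$, and hence $\lambda$ is reconstructed uniquely from $\Phi(\lambda)$. This establishes the desired inequality.

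The main obstacle I expect in writing this up rigorously is purely bookkeeping: carefully treating the $i = 1$ contribution in the formula for the number of $1$'s in $\Phi(\lambda)$, and separating the edge case $x_1 = 1$ (in which $y_1 = x_1 = 1$ and $i = 1$ contributes no excess) from $x_1 > 1$. As a cross-check, one can give a generating-function proof via telescoping: replace the factors $(1 - q^{x_k})^{-1}$ by $(1 - q^{y_k})^{-1}$ one at a time, and use the factorization $q^{y_k} - q^{x_k} = q^{y_k}(1 - q)(1 + q + \cdots + q^{x_k - y_k - 1})$ together with the factor $(1-q)^{-1}$ supplied by $y_1 = 1$ to show that each step introduces only nonnegative coefficients into the power series.
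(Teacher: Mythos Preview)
Your proposal is correct and matches the approach the paper takes: the paper does not prove Theorem~\ref{thm:andrews ST} separately but proves its generalization Lemma~\ref{lem:STlemma} via exactly the injection you describe (replace each $x_i$ by $y_i$ and absorb the excess as copies of $y_1$), attributing this idea to Yee's proof of Theorem~\ref{thm:andrews ST}. Your injectivity argument and bookkeeping are the same as theirs specialized to $m=1$; the generating-function cross-check you sketch is a nice bonus not present in the paper.
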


\noindent Yee \cite{yee_alders_2008} employs this theorem in her work on Alder's conjecture. Kang and Park \cite{kang_analogue_2020} present a generalization of Theorem \ref{thm:andrews ST} which we generalize further below.  

\begin{lemma}\label{lem:STlemma}
Let $S = \{x_{i}\}_{i =1}^{\infty}$ and $T = \{y_{i}\}_{i=1}^{\infty}$ be strictly increasing sequences of positive integers such that $y_{1} = m$, $m$ divides each $y_{i}$, and $x_{i} \geq y_{i}$ for all $i$. Then for all $n\geq 1$,
\[
\rho(T;mn) \geq \rho(S;mn). 
\]
\end{lemma}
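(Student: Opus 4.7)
The plan is to build an explicit injection $\phi$ from partitions of $mn$ with parts in $S$ into partitions of $mn$ with parts in $T$, which immediately yields $\rho(S;mn)\le \rho(T;mn)$.  This directly generalizes the standard proof of Theorem~\ref{thm:andrews ST}, exploiting the two hypotheses simultaneously: the smallest allowable part $y_1=m$ serves as a ``filler'', and the divisibility $m\mid y_i$ guarantees that the number of fillers needed is always an integer.

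First, I would represent a partition $\lambda$ of $mn$ via multiplicities, writing $\lambda=\sum_{i\ge 1} c_i x_i$ with $c_i\ge 0$ and $\sum_i c_i x_i=mn$, and then define $\phi(\lambda)$ to be the partition of $mn$ consisting of $c_i$ copies of $y_i$ for every $i\ge 2$, together with $\tfrac{1}{m}\bigl(mn-\sum_{i\ge 2} c_i y_i\bigr)$ copies of $y_1=m$.  Three routine checks make $\phi(\lambda)$ valid: (i) the filler count is a non-negative integer, since $x_i\ge y_i$ gives $\sum_{i\ge 2}c_i y_i \le \sum_{i\ge 2}c_i x_i \le mn$, and $m\mid y_i$ for all $i$ together with $m\mid mn$ ensure integrality; (ii) the total weight is $mn$ by a direct calculation; and (iii) every part of $\phi(\lambda)$ lies in $T$.

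Next, I would establish injectivity.  Since $y_i>y_1=m$ for every $i\ge 2$, the multiplicities of the parts of $\phi(\lambda)$ strictly larger than $m$ recover $c_i$ for all $i\ge 2$.  Given these, the constraint $\sum_i c_i x_i=mn$ forces the remaining multiplicity to be $c_1=(mn-\sum_{i\ge 2} c_i x_i)/x_1$, so $\lambda$ is uniquely reconstructed from $\phi(\lambda)$.  Existence of $\phi(\lambda)$ as a partition from $T$, together with this uniqueness, is the entire argument.

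The main (mild) obstacle is recognizing that both hypotheses $m\mid y_i$ and $x_i\ge y_i$ are needed in tandem: divisibility keeps the filler count integral, while $x_i\ge y_i$ keeps it non-negative; either failing would invalidate the construction.  No generating functions or induction are needed, and the argument reduces to the standard proof of Andrews' Theorem~\ref{thm:andrews ST} with the role of ``$1$'' played uniformly by $m$.
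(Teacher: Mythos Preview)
Your proof is correct and is essentially the same as the paper's: both construct the identical injection that replaces each part $x_i$ by $y_i$ and then pads with copies of $y_1=m$, with the filler count $\tfrac{1}{m}\bigl(mn-\sum_{i\ge 2}c_i y_i\bigr)$ being exactly the paper's $f_1=e_1+\tfrac{1}{m}\sum_j(x_j-y_j)e_j$ after simplification. The well-definedness and injectivity checks match as well.
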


\begin{proof}
Let $\widetilde{S}_{mn}$, $\widetilde{T}_{mn}$ be the sets of partitions of $mn$ with parts coming from $S$, $T$, respectively.  We construct an injection $\varphi: \widetilde{S}_{mn} \to \widetilde{T}_{mn}$, similar to Yee's \cite{yee_alders_2008} proof of Theorem \ref{thm:andrews ST}.

Let $\lambda$ be a partition in $\widetilde{S}_{mn}$. Define $e_i$ to be the number of times $x_i$ occurs in $\lambda$, so that $\lambda$ can be described by $mn = \sum_{i=1}^{\infty} e_ix_i$, which is clearly divisible by $m$. Since each $y_{i}$ is divisible by $m$, we can conclude that 
\[
\sum_{i=1}^{\infty}(x_i - y_i)e_i
\] 
is divisible by $m$. 

Define the sequence $\{f_{i}\}_{i=1}^{\infty}$ by
\[  
f_{i} = \begin{cases}
e_1 + \frac{1}{m} \sum_{j=1}^{\infty}(x_j - y_j)e_j, & i = 1\\
e_i, & i > 1.
\end{cases}
\]
From this we can define $\varphi(\lambda)$ to be the partition consisting of parts $y_i$ each occurring $f_i$ times.  Since $y_1=m$, 
\[
\sum_{i=1}^{\infty}f_iy_i = e_{1}y_{1} + \sum_{i=1}^{\infty}(x_i -y_i)e_i + \sum_{i=2}^{\infty}e_iy_i = \sum_{i=1}^{\infty}e_ix_i = mn,
\] 
so $\varphi(\lambda)$ is indeed a partition of $mn$ and thus $\varphi(\lambda) \in \widetilde{T}_{mn}$ as desired.   

It remains to show that $\varphi$ is injective. Let $\lambda, \lambda' \in \widetilde{S}_{mn}$ such that $\lambda$ is described by $mn = \sum_{i=1}^{\infty} e_ix_i$, and $\lambda'$ is described by $mn = \sum_{i=1}^{\infty} g_ix_i$.  If $\varphi(\lambda) =\varphi(\lambda')$, then we can immediately conclude $e_i=g_i$ for all $i \geq 2$ by the definition of $\varphi$.  Moreover, we must have
\[
e_1 + \frac{1}{m} \sum_{i=1}^{\infty}(x_i - y_i)e_i = g_1 + \frac{1}{m} \sum_{i=1}^{\infty}(x_i - y_i)g_i,
\]
and thus $e_1=g_1$ as well. So $\lambda = \lambda'$ and we conclude that $\varphi$ is injective. 
\end{proof} 

The following application of Lemma \ref{lem:STlemma} will be used in the proofs of Theorems \ref{thm:andrewsarbitrary} and \ref{thm:yeearbitrary}. 

\begin{corollary}\label{cor:STgeneral}
Let $a\geq 1$, $t\geq \log_2(a)$, $s\geq t+4$, $d\geq 2^s-2^t$ such that $2^t$ divides $d$, and $d-3 \leq b \leq d$.  Then
\[
\rho(T_{t,s,d}; 2^t n) \geq \rho(S_{a,b}; 2^t n),
\]
where
\begin{align*}
T_{t,s,d} & := \{y \in \N \mid y \equiv 2^t, d+2^{t+1}, \ldots, d+ 2^{s-1} \!\!\! \pmod{2d} \}, \\
S_{a,b} & := \{x\in \N \mid x \equiv \pm a \!\!\! \pmod{b+3} \} \backslash \{ a, b+3-a\}.
\end{align*}
\end{corollary}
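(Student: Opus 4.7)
The plan is to apply Lemma~\ref{lem:STlemma} with modulus $m = 2^t$ to the sequences $T := T_{t,s,d}$ and $S := S_{a,b}$ listed in increasing order as $\{y_i\}_{i\geq 1}$ and $\{x_i\}_{i\geq 1}$. First I would verify the two structural hypotheses of Lemma~\ref{lem:STlemma}: (i) $y_1 = 2^t$ and (ii) $2^t$ divides every element of $T$. The residues $2^t, d+2^{t+1}, \ldots, d+2^{s-1}$ modulo $2d$ all lie in $(0, 2d]$ (using $2^{s-1} \leq d$, which follows from $d \geq 2^s - 2^t \geq 2^{s-1}$), and the smallest is $2^t$, giving (i). For (ii), each $d + 2^{t+j}$ with $j \geq 1$ and $2d$ itself are divisible by $2^t$ since $2^t \mid d$ by hypothesis.

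The main step is to prove $x_i \geq y_i$ for every $i \geq 1$. This is equivalent to showing $N_T(N) \geq N_S(N)$ for every $N \geq 0$, where $N_T(N) := \#\{y \in T : y \leq N\}$ and $N_S(N)$ is defined analogously; since $N_S$ is a step function that only jumps at elements of $S$ and $N_T$ is non-decreasing, it suffices to check this at $N = x$ for each $x \in S$. Parameterizing $T$ by $y_{(s-t)k+j} = 2dk + \tau_j$ with $\tau_1 = 2^t$ and $\tau_j = d + 2^{t+j-1}$ for $2 \leq j \leq s-t$, one computes
\[
N_T(N) = (s-t)\lfloor N/(2d)\rfloor + \#\{j : \tau_j \leq r\}, \qquad r := N - 2d\lfloor N/(2d)\rfloor,
\]
provided $N \geq \tau_{s-t} = d + 2^{s-1}$. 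The elements of $S$ are $k(b+3)+a$ for $k \geq 1$ and $k(b+3)-a$ for $k \geq 2$, and a direct enumeration gives $N_S(k(b+3)+a) = 2k-1$ and $N_S(k(b+3)-a) = 2k-2$.

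Setting $\epsilon := b+3-d \in \{0,1,2,3\}$, both cases reduce to estimating $\lfloor k/2 + (k\epsilon \pm a)/(2d)\rfloor$, using $a \leq 2^t$ (from $t \geq \log_2 a$) and $d \geq 15 \cdot 2^t$ (from $d \geq 2^s - 2^t$ and $s \geq t+4$). When $k$ is even, even if $\lfloor N/(2d)\rfloor$ drops to $k/2 - 1$ the remainder $r$ is at least $2d - a$, which one verifies is bounded below by every $\tau_l$, so $N_T(N) \geq (s-t)\cdot k/2 \geq 2k$. When $k \geq 3$ is odd, $\lfloor N/(2d)\rfloor \in \{(k-1)/2, (k+1)/2\}$; in the harder sub-case the remainder is still at least $d$, so $\tau_1 = 2^t \leq r$ contributes the one extra term needed, yielding $N_T(N) \geq (s-t)(k-1)/2 + 1 \geq 2k-1$. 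The edge case $k=1$ in the $+a$ sub-case is handled directly: $N_T(b+3+a) \geq 1 = N_S(b+3+a)$ since $2^t \leq b+3+a$ is immediate from $b+3 \geq d \geq 15\cdot 2^t$.

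Once $N_T(N) \geq N_S(N)$ is verified at every $x \in S$, Lemma~\ref{lem:STlemma} immediately yields the desired inequality $\rho(T_{t,s,d}; 2^t n) \geq \rho(S_{a,b}; 2^t n)$. I expect the main obstacle to be the odd-$k$ case when $\epsilon = 0$ (i.e., $b = d - 3$), since then $\lfloor N/(2d)\rfloor$ loses a half-integer and the $\tau_l$ for $l \geq 2$ all exceed the partial-period remainder; the required extra contribution must come from $\tau_1 = 2^t$, which works precisely because the remainder equals $d \pm a$, bounded below by $2^t$, and because $s \geq t+4$ ensures $s-t \geq 4$ to compensate for the loss.
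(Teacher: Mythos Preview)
Your argument is correct and uses the same overall strategy as the paper: verify the hypotheses of Lemma~\ref{lem:STlemma} with $m=2^t$ and then establish the termwise inequality $x_i\ge y_i$. The tactical difference is in how the termwise comparison is carried out. The paper first observes that decreasing $s$ can only enlarge the $i$th element of $T_{t,s,d}$ (fewer columns in the table), reduces to the extremal case $s=t+4$, and then compares the $j$th rows of the resulting four-column tables for $T_{t,t+4,d}$ and $S_{a,b}$ entry by entry. You instead keep $s$ general, recast $x_i\ge y_i$ as the counting-function inequality $N_T(N)\ge N_S(N)$, and invoke $s-t\ge4$ directly in the estimate; this is slightly more computational but avoids the monotonicity-in-$s$ reduction. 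One minor slip: in the odd-$k$, $-a$ subcase the remainder is only $\ge d-a$, not $\ge d$ as you wrote; this is harmless, since for $-a$ you only need $N_T\ge 2k-2$, which already follows from $(s-t)(k-1)/2\ge 2k-2$ without any contribution from the residual term. Similarly, for large $k$ with $\epsilon>0$ the floor $\lfloor N/(2d)\rfloor$ can exceed $(k+1)/2$, but this only strengthens your bound.
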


\begin{proof}
Since $d$ is divisible by $2^t$, so is each element of $T_{t,s,d}$.  Moreover, the smallest element of $T_{t,s,d}$ is $2^t$, so if we can show that the $i$th term of $T_{t,s,d}$ is at most the size of the $i$th term of $S_{a,b}$ (ordering by size), then $\rho(T_{t,s,d}; 2^t n) \geq \rho(S_{a,b}; 2^t n)$ will follow directly from Lemma \ref{lem:STlemma}.

By hypothesis, $2^t\leq 2^s - 2^t \leq d$.  It follows that the elements of $T_{t,s,d}$ ordered by size across rows are as shown in Figure \ref{GeneralT}.

\begin{figure}[h!]\caption{Elements of $T_{t,s,d}$ ordered by size across rows}\label{GeneralT} 
\[
\begin{array}{|c|c|c|c|}
\hline
2^t & d + 2^{t+1} & \cdots & d + 2^{s-1} \\
\hline
2d + 2^t & 3d + 2^{t+1} & \cdots & 3d + 2^{s-1} \\ 
\hline
\vdots & \vdots & \vdots & \vdots \\
\hline
(2j-2)d + 2^t & (2j-1)d + 2^{t+1} & \cdots & (2j-1)d + 2^{s-1} \\ 
\hline
\vdots & \vdots & \vdots & \vdots \\
\hline
\end{array}
\]
\end{figure}

Notice that the first column is independent of $s$, and there are $s-t$ elements in each row.  Decreasing $s$ will yield fewer columns, so we can conclude that when ordered by size, the $i$th element of $T_{t,s+1,d}$ is less than the $i$th element of $T_{t,s,d}$ for any $s \geq t+4$.  Thus to finish the proof it suffices to show that $i$th term of $T_{t,t+4,d}$ is at most the size of the $i$th term of $S_{a,b}$.  Here, $T_{t,t+4,d}$ has elements ordered by size across rows as given in Figure \ref{SpecificT}.

\begin{figure}[h!]\caption{Elements of $T_{t,t+4,d}$ ordered by size across rows}\label{SpecificT} 
\[
\begin{array}{|c|c|c|c|}
\hline
2^t & d + 2^{t+1} & d + 2^{t+2}  & d + 2^{t+3} \\
\hline
2d + 2^t & 3d + 2^{t+1}  & 3d+ 2^{t+2} & 3d + 2^{t+3}\\ 
\hline
\vdots & \vdots & \vdots & \vdots \\
\hline
(2j-2)d + 2^t & (2j-1)d + 2^{t+1}  & (2j-1)d + 2^{t+2} & (2j-1)d + 2^{t+3} \\ 
\hline
\vdots & \vdots & \vdots & \vdots \\
\hline
\end{array}
\]
\end{figure}

By hypothesis, $a\leq 2^t$ and $d \geq 15\cdot 2^t$.  Thus for $k$ a positive integer, $k(d+3) + a < (k+1)(d+3) -a$.  It follows that the elements of $S_{a,b}$ are given in order of size in Figure \ref{Selements}.

\begin{figure}[h!]\caption{Elements of $S_{a,b}$ ordered by size across rows}\label{Selements} 
\[
\begin{array}{|c|c|c|c|}
\hline
(b+3) + a  & 2(b+3) - a & 2(b+3) + a & 3(b+3) - a \\
\hline
3(b+3) + a & 4(b+3) - a & 4(b+3) + a & 5(b+3) - a \\ 
\hline
\vdots & \vdots & \vdots & \vdots \\
\hline
(2j-1)(b+3) + a & 2j(b+3) - a & 2j(b+3) + a & (2j+1)(b+3) - a \\ 
\hline
\vdots & \vdots & \vdots & \vdots \\
\hline
\end{array}
\]
\end{figure}

Let $x_i$ and $y_i$ be the $i$th smallest elements of $S_{a,b}$ and $T_{t,t+4,d}$, respectively.  We now prove that $x_i \geq y_i$ by considering the corresponding elements in the $j$th rows of the tables in Figures \ref{Selements} and \ref{SpecificT}.  Each of the four cases follows in a straightforward manner from the facts that $d \geq 15\cdot 2^t$, $d-3\leq b \leq d$, and $a\leq 2^t$, which concludes the proof.  
\end{proof}


We now present two lemmas that we use in the proofs of Theorem \ref{thm:final kp thm}, Theorem \ref{thm:stringarbitrary}, and later Proposition \ref{prop:schurprop}.

\begin{lemma}\label{lem:qstarlemma}
Let $a, d \geq 1$, and $n\geq d+2a$.  Then,
\[ 
\qda \geq \q{\ceil*{\frac{d}{a}}}{1}{\ceil*{\frac{n}{a}}}.
\]
\end{lemma}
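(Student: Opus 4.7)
My plan is to construct an injection from the partitions counted by $\q{\ceil*{d/a}}{1}{\ceil*{n/a}}$ into those counted by $\qda$. Set $d' = \ceil*{d/a}$, $m = \ceil*{n/a}$, and $r = am - n$, so $r \in \{0, 1, \ldots, a-1\}$. Given $\mu = (\mu_1 > \mu_2 > \cdots > \mu_\ell)$ with each $\mu_i - \mu_{i+1} \geq d'$ and $\mu_\ell \geq 1$ summing to $m$, the starting idea is simply to scale each part by $a$: the parts $a\mu_i$ are automatically $\geq a$ and have consecutive differences $\geq a d' \geq d$, but they sum to $am = n + r$, so I must shed $r$ units while maintaining both constraints.

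The construction splits into two cases according to whether the correction can be absorbed by the smallest part. In the main case, when $\mu_\ell \geq 2$ or $r = 0$, I set $\lambda_i = a\mu_i$ for $i < \ell$ and $\lambda_\ell = a\mu_\ell - r$; then $\lambda_\ell \geq a$ (since either $r = 0$, or $\mu_\ell \geq 2$ together with $r \leq a - 1$), the difference $\lambda_{\ell-1} - \lambda_\ell = a(\mu_{\ell-1} - \mu_\ell) + r \geq d$, and all remaining differences are untouched. In the exceptional case $\mu_\ell = 1$ with $r > 0$, naively subtracting $r$ would push the smallest part below $a$, so I would instead drop $\mu_\ell$ outright and deposit the shortfall $a - r$ onto the largest part: $\lambda_1 = a\mu_1 + (a - r)$ and $\lambda_i = a\mu_i$ for $1 < i \leq \ell - 1$. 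The sum is again $n$, the smallest part is at least $a$, and the only perturbed difference $\lambda_1 - \lambda_2 = a(\mu_1 - \mu_2) + (a - r) \geq d$ remains valid. The hypothesis $n \geq d + 2a$ enters precisely here: it forces $m \geq 2$, hence $\ell \geq 2$ whenever $\mu_\ell = 1$, so that there is a part other than $\mu_\ell$ onto which to deposit the $a - r$ units.

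To conclude I would verify injectivity. Because $r$ depends only on $n$, the residue $\lambda_1 \bmod a$ detects which case was used: Case 1 gives $a \mid \lambda_1$, while Case 2 produces $\lambda_1 \equiv -r \pmod{a}$ with $-r \not\equiv 0$ since $0 < r < a$. Within each case, the formulas $\mu_i = \lambda_i / a$ for $i < \ell$ together with $\mu_\ell = (\lambda_\ell + r)/a$ (in Case 1), and $\mu_1 = (\lambda_1 - (a - r))/a$ combined with $\mu_i = \lambda_i / a$ for $2 \leq i < \ell$ and $\mu_\ell = 1$ (in Case 2), recover $\mu$ unambiguously. The main obstacle throughout is really just isolating the combined edge case $\mu_\ell = 1$ with $r > 0$; once that is peeled off and the correction is routed through the largest part rather than the smallest, everything else is routine verification of the sum, minimum-part, and difference conditions.
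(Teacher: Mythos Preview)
Your injectivity argument has a genuine gap. You claim that ``Case~1 gives $a \mid \lambda_1$'', but this is only true when $\ell \geq 2$: if $\ell = 1$ then $\lambda_1 = \lambda_\ell = a\mu_1 - r$, which is \emph{not} divisible by $a$ once $r > 0$. Concretely, whenever $r > 0$ the two source partitions
\[
\mu = (m) \quad\text{and}\quad \mu' = (m-1,\,1)
\]
collide. The first has $\ell = 1$ and $\mu_\ell = m \geq 2$, so Case~1 sends it to $\lambda = (am - r) = (n)$. The second has $\mu'_\ell = 1$ with $r > 0$, so Case~2 drops the $1$ and sends it to $\lambda = (a(m-1) + (a-r)) = (n)$ as well. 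Note that $\mu'$ is always a legitimate $d'$-difference partition of $m$ here, since the hypothesis $n \geq d + 2a$ forces $m \geq d' + 2$ and hence $(m-1) - 1 \geq d'$. So your map is not injective.

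The paper's proof takes essentially the same scale-by-$a$-then-correct approach but splits into four cases rather than two, and the extra case-splitting is exactly what repairs this collision: the single-part source partition is handled separately (it is sent to the two-part partition $(a,\, n-a)$, whose smallest part equals $a$), while the image $(n)$ is reserved for the two-part source $(a,\, a(m-1))$. Your construction can be patched along the same lines---peel off the case $\ell = 1$ and route it somewhere other than $(n)$---but as written the proof is incomplete.
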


\begin{proof}
For a positive integer $x$, write $\hat{x}$ to denote the least residue of $-x \pmod{a}$ so that $\ceil*{\frac{x}{a}} = \frac{x + \hat{x}}{a}$.  Fix $n\geq d+2a$, and define $\q{d+\hat{d}}{a}{n+\hat{n}}^*$ to count the number of partitions of $n+\hat{n}$ into parts $\geq a$, each a multiple of $a$, with difference between parts $\geq d+\hat{d}$.  Then it is straightforward to see that 
\[
\q{d+\hat{d}}{a}{n+\hat{n}}^* =  \q{\frac{d + \hat{d}}{a}}{1}{\frac{n + \hat{n}}{a}} = \q{\ceil*{\frac{d}{a}}}{1}{\ceil*{\frac{n}{a}}},
\]
by the natural bijection between the relevant sets of partitions obtained by dividing (or multiplying) the parts by $a$.  Thus it suffices to show that $\qda \geq \q{d+\hat{d}}{a}{n+\hat{n}}^*$. 

Let $X(n)$ be the set of partitions of $n$ counted by $\qda$ and $Y(n+\hat{n})$ the set of partitions of $n+\hat{n}$ counted by $\q{d+\hat{d}}{a}{n+\hat{n}}^*$.  We construct an injection $\varphi : Y(n+\hat{n}) \to X(n)$.  Note that when $n$ is a multiple of $a$, the identity map gives such an injection.  Suppose $n\not\equiv 0 \pmod{a}$, and $\lambda\in Y(n+\hat{n})$ is of the form $n+\hat{n} = \lambda_1 + \cdots + \lambda_k$, where the parts are listed in increasing order.  Define $\varphi(\lambda)$ by
\[ 
\varphi(\lambda) =
\begin{cases}
(a, n-a) &\text{ if }k=1 \\
(n) &\text{ if } \lambda_1=a, k=2 \\ 
(\lambda_2, ..., \lambda_k+a-\hat{n}) &\text{ if } \lambda_1=a, k\geq 3 \\
(\lambda_1-\hat{n}, \lambda_2, ..., \lambda_k) &\text{ if }  \lambda_1 > a, k\geq 2.
\end{cases}
\]
Since $n\geq d+2a$, it follows that $\varphi(\lambda) \in X(n)$.  Moreover, observe that the first case above is the only case where the smallest part of $\varphi(\lambda)$ is $a$.  Thus, the only way for two partitions $\lambda \neq \lambda' \in Y(n+\hat{n})$ to map to the same partition under $\varphi$ would be through the third and fourth cases above.  However, since $\hat{n}\neq 0$, the last part arising from the third case is not a multiple of $a$ and thus cannot equal the any last part arising from the fourth case.  So $\varphi$ is an injection and thus $\qda \geq \q{d+\hat{d}}{a}{n+\hat{n}}^*$ as desired.  
\end{proof}

\begin{lemma}\label{lem:Qidentitylemma}
Let $a, d, n \geq 1$ such that $a$ divides $d + 3$.  Then, 
\begin{align*}
\Qdash{d}{a}{an} &= \Qdash{\frac{d+3}{a}-3}{1}{n},\\
\Qdashdash{d}{a}{an} &= \Qdashdash{\frac{d+3}{a}-3}{1}{n}.
\end{align*}

\end{lemma}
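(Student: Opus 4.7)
The plan is to prove both identities via one direct bijection, obtained by dividing every part by $a$. First I would observe that because $a$ divides $d+3$, every positive integer congruent to $\pm a \pmod{d+3}$ — i.e., of the form $k(d+3)\pm a$ for some $k\geq 0$ — is automatically a multiple of $a$. Writing $m := (d+3)/a$ and $d' := m-3 = \tfrac{d+3}{a}-3$, the allowable parts on the left-hand side are exactly the integers of the form $a(km\pm 1)$, which is to say $a$ times the positive integers congruent to $\pm 1 \pmod{m} = \pmod{d'+3}$.

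Next I would define $\varphi$ on partitions of $an$ counted by $\Qdash{d}{a}{an}$ (resp.\ $\Qdashdash{d}{a}{an}$) by dividing each part by $a$. Since every part is a multiple of $a$, the image is a legitimate partition of $n$, and its parts are congruent to $\pm 1 \pmod{d'+3}$. Checking the forbidden parts: on the left the excluded part $d+3-a = a(m-1)$ maps to $m-1 = d'+3-1$, which is precisely the part excluded by $\Qdash{d'}{1}{n}$; and for the doubly-dashed statement, the additional forbidden part $a$ on the left maps to $1$, matching the other exclusion in $\Qdashdash{d'}{1}{n}$. The inverse map — multiplying every part by $a$ — clearly sends partitions of $n$ counted on the right back to partitions of $an$ counted on the left, so $\varphi$ is a bijection and both identities follow.

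As a sanity check (and an alternative route), one can also argue purely via generating functions: taking \eqref{eq:Qdbdashgenfunc} and \eqref{eq:Qdbdashdashgenfunc} with $b=a$, substituting $x = q^a$, and using $(d+3)/a = m = d'+3$ and $(2d+6-a)/a = 2m-1$, one obtains exactly the generating functions for $\Qdash{d'}{1}{n}$ and $\Qdashdash{d'}{1}{n}$ respectively; comparing coefficients of $x^n$ (equivalently $q^{an}$) gives the lemma.

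I do not anticipate a real obstacle: once the divisibility observation is made the bijection is essentially forced. The only fussy point to handle is the degenerate boundary $a = (d+3)/2$, where $m=2$ forces the piecewise branch of \eqref{eq:Qdbdashgenfunc}--\eqref{eq:Qdbdashdashgenfunc} and makes $d' = -1$; since $a \mid (d+3)$ with $1 \leq a \leq d+2$ forces $a \leq (d+3)/2$, this is the single edge case, and as the lemma is applied later with $a$ small compared to $d$ it causes no problem and can simply be noted.
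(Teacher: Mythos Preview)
Your approach is exactly the one the paper takes: its proof is the single sentence ``The result follows directly from the natural bijection between the relevant sets of partitions obtained by dividing (or multiplying) the parts by $a$,'' and your write-up simply fills in the details of that bijection. The generating-function check and the remark on the boundary case $a=(d+3)/2$ are extras the paper omits, but the core argument is identical.
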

\begin{proof}
The result follows directly from the natural bijection between the relevant sets of partitions obtained by dividing (or multiplying) the parts by $a$.
\end{proof}

\section{A modification of Alder's conjecture}\label{sec:modified alders section}

In this section we utilize work of Andrews \cite{andrews_partition_1971} and Yee \cite{yee_alders_2008} to prove two modifications of \eqref{eq:alderthm}.  The first, stated below, will be used to prove Theorem \ref{thm:final kp thm} and later Proposition \ref{prop:schurprop}.

\begin{proposition}\label{prop:k-2andmlemma}
If $d=15$ or $d \geq 31$, then for $n \geq 1$, 
\[
\q{d}{1}{n} \geq \Qdash{d-2}{1}{n}.
\]
\end{proposition}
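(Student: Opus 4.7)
The plan is to apply Andrews' sequence comparison theorem (Theorem \ref{thm:andrews ST}) via an intermediate set $T_d \subset \mathbb{N}$ with $1 \in T_d$, chosen so that (i) $q_d^{(1)}(n) \geq \rho(T_d; n)$ and (ii) the $i$-th smallest element of $T_d$ is at most the $i$-th smallest element of
\[
S_d := \{x \in \mathbb{N} : x \equiv \pm 1 \pmod{d+1}\} \setminus \{d\},
\]
the set of allowed parts in $Q_{d-2}^{(1,-)}$. Then Theorem \ref{thm:andrews ST} applied to (ii) gives $\rho(T_d; n) \geq Q_{d-2}^{(1,-)}(n)$, which combined with (i) yields the proposition.

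The natural starting point is Andrews' 1971 identity \cite{andrews_partition_1971}: when $d = 2^r - 1$, one has $q_d^{(1)}(n) = \rho(T_r; n)$ with $T_r := \{(2^{r+1} - 2)k + 2^j : 0 \leq j \leq r-1,\, k \geq 0\}$. This handles $d = 15$ (where $r = 4$) and $d = 31$ (where $r = 5$) directly. For general $d \geq 31$, I would combine the monotonicity $q_d^{(1)}(n) \geq q_{d'}^{(1)}(n)$ (for $d \leq d'$) with Andrews' identity at the smallest $d' = 2^{r'} - 1 \geq d$, so that $r' = \lceil \log_2(d+1) \rceil \geq 5$. The termwise comparison (ii) is then plausible because $T_{r'}$ contributes $r'$ elements per batch of length $2^{r'+1} - 2$ while $S_d$ contributes $2$ elements per batch of length $d+1$; for $d \geq 31$ and $r' \geq 5$ the respective growth rates favor $T_{r'}$, and the initial positions reduce to the observation that the powers of $2$ in the first batch of $T_{r'}$ fit comfortably between $1$ and $d+2$ in $S_d$.

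The hard part will be handling $d \geq 31$ with $d \neq 2^r - 1$. When $d$ sits strictly between $2^{r'-1} - 1$ and $2^{r'} - 1$, the coarse monotonicity bound wastes a factor and the termwise comparison can fail at some intermediate position (for instance, for $d = 32$ the $13$-th element of $T_6$ is $253$, already exceeding the $13$-th element $230$ of $S_{32}$). Overcoming this will likely require either a customized $T_d$ tailored to the specific $d$, via an Andrews-type generating function argument, or a direct injection argument in the spirit of Yee \cite{yee_alders_2008}. This refinement is presumably the new comparison technique alluded to in the abstract, and I expect its construction—together with the initial-position checks and the eventual cross-batch verification for all $d \geq 31$—to be the technical crux of the proof.
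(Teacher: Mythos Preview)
Your overall strategy---find an intermediate set $T_d$ with $q_d^{(1)}(n)\ge\rho(T_d;n)$ and then apply Theorem~\ref{thm:andrews ST} against $S_d$---is exactly what the paper does. But as written this is a plan, not a proof: you explicitly leave the case $d\ge 31$, $d\ne 2^r-1$ unresolved, and that is most of the proposition.

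The missing ingredient is not a new technique but an existing one. The paper invokes Yee's Lemmas 2.2 and 2.7 \cite{yee_alders_2008}, which for $d\ge 31$ with $d\ne 2^r-1$ and $n\ge 4d+2^r$ give $q_d^{(1)}(n)\ge \rho(T_{r-1,d};n)$ with the $d$-dependent set
\[
T_{r-1,d}=\{y\in\mathbb{N}\mid y\equiv 1,\,d+2,\,d+4,\ldots,\,d+2^{r-2}\pmod{2d}\}.
\]
This is precisely the ``customized $T_d$'' you speculate about, and the termwise comparison $T_{r-1,d}$ versus $S_d$ then goes through for $d\ge 31$ (one reduces to the four-column table $T_{4,d}$ and checks row by row). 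Your monotonicity route via $q_{2^{r'}-1}^{(1)}$ is not needed and, as you note yourself with the $d=32$ example, does not work.

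There is a second gap you do not mention: Yee's bound only holds for $n\ge 4d+2^r$, so the range $1\le n< 4d+2^r$ (hence $n\le 5d$) requires a separate argument. The paper handles this by direct enumeration of $Q_{d-2}^{(1,-)}(n)$ in sub-intervals of $[1,5d]$ and matching lower bounds for $q_d^{(1)}(n)$.

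Two smaller corrections: Andrews' 1971 result is an inequality $q_d^{(1)}(n)\ge \mathcal{L}_d(n)$, not an identity, and after a product manipulation one has $\mathcal{L}_d(n)=\rho(T_{s,d};n)$ with $T_{s,d}=\{y\equiv 1,\,d+2,\ldots,\,d+2^{s-1}\pmod{2d}\}$, which differs from your $T_r$. This does not affect the argument for $d=2^s-1$, but the set you wrote down is not the one that works.
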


\begin{proof} 
Proposition \ref{prop:k-2andmlemma} will be proved in three cases\footnote{See Section \ref{sec:asymptotics section} for a comment on the cases when $1 \leq d < 31$, $d\neq 15$.}, which we will prove by adapting the methods of Andrews \cite{andrews_partition_1971} and Yee \cite{yee_alders_2008}.

\underline{Case I:} 
Let $d \geq 31$, $d \neq 2^{r} - 1$, where $r$ is defined in \eqref{eq:r def} with $a = 1$, and $n \geq 4d + 2^r$.  By work of Yee \cite[Lemmas 2.2 and 2.7]{yee_alders_2008} it follows that   
\begin{equation}\label{eq:yeepart}
\q{d}{1}{n} \geq \mathcal{G}_d^{(1)}(n), 
\end{equation}
where 
\begin{equation}\label{eq:Gdef}
g_d^{(1)}(q) = \sum_{k\geq 0} \mathcal{G}_d^{(1)}(n) q^n =  \frac{\pochneg{d+2^{r-1}}{2d}} {(q;q^{2d})_\infty\poch{d+2}{2d}\cdots \poch{d+2^{r-2}}{2d}}.
\end{equation}
From \eqref{eq:Gdef}, we see that $\mathcal{G}_d^{(1)}(n)$ counts partitions of $n$ into distinct parts from the set $\{ x \in \N \mid x \equiv d+2^{r-1} \!\!\! \pmod{2d}\}$ and unrestricted parts from the set 
\[
T_{r-1,d} = \{y \in \N \mid y \equiv 1, d+2, \ldots, d+ 2^{r-2} \!\!\! \pmod{2d} \}.
\]  
Thus clearly $\mathcal{G}_d^{(1)}(n) \geq \rho(T_{r-1,d}; n)$, so we have
\begin{equation}\label{eq:Yeefirst}
\q{d}{1}{n} \geq \rho(T_{r-1,d}; n).
\end{equation}
Now define
\begin{equation}\label{eq:Sdef}
S_d := \{x \in \N \mid x \equiv \pm 1 \!\!\! \pmod{d+1}\} \backslash \{ d \},
\end{equation}
so that $\rho(S_d; n) =  Q_{d-2}^{(1, -)}(n)$.  Thus in this case the result will follow from proving that $\rho(T_{r-1,d}; n) \geq \rho(S_d; n)$ for all $n\geq 1$.  Unfortunately we can't use Corollary \ref{cor:STgeneral} here, since we are only removing one part in our definition of $S_d$.  However, we can use the same method as we used in the proof of Corollary \ref{cor:STgeneral}.

By hypothesis, $d\geq 2^r-1$.  It follows that the elements of $T_{r-1,d}$, ordered by size across rows, are as shown in Figure \ref{GeneralT_r}.

\begin{figure}[h!]\caption{Elements of $T_{r-1,d}$ ordered by size across rows}\label{GeneralT_r} 
\[
\begin{array}{|c|c|c|c|}
\hline
1 & d + 2 & \cdots & d + 2^{r-2} \\
\hline
2d + 1& 3d + 2^{t+1} & \cdots & 3d + 2^{r-2} \\ 
\hline
\vdots & \vdots & \vdots & \vdots \\
\hline
(2j-2)d + 1 & (2j-1)d + 2 & \cdots & (2j-1)d + 2^{r-2} \\ 
\hline
\vdots & \vdots & \vdots & \vdots \\
\hline
\end{array}
\]
\end{figure}

Since $d\geq 31$, we have that $r\geq 5$.  As in the proof of Corollary \ref{cor:STgeneral}, it thus suffices to show that $i$th term of $T_{4,d}$ is at most the size of the $i$th term of $S_d$.  Here, $T_{4,d}$ has elements ordered by size across rows as given in Figure \ref{SpecificT_r}.

\begin{figure}[h!]\caption{Elements of $T_{4,d}$ ordered by size across rows}\label{SpecificT_r} 
\[
\begin{array}{|c|c|c|c|}
\hline
1 & d + 2 & d + 4  & d + 8 \\
\hline
2d + 1 & 3d + 2  & 3d+ 4 & 3d + 8\\ 
\hline
\vdots & \vdots & \vdots & \vdots \\
\hline
(2j-2)d + 1 & (2j-1)d + 2  & (2j-1)d + 4 & (2j-1)d + 8 \\ 
\hline
\vdots & \vdots & \vdots & \vdots \\
\hline
\end{array}
\]
\end{figure}

Since $d\geq 31$ it follows that the elements of $S_d$ are given ordered by size across rows in Figure \ref{S_delements}.

\begin{figure}[h!]\caption{Elements of $S_d$ ordered by size across rows}\label{S_delements} 
\[
\begin{array}{|c|c|c|c|}
\hline
1  & d+2 & 2d+1 & 2d+3 \\
\hline
3d+2 & 3d+4 & 4d+3 & 4d+5 \\ 
\hline
\vdots & \vdots & \vdots & \vdots \\
\hline
(2j-1)d + (2j-2) & (2j-1)d + 2j & (2j)d + (2j-1) & (2j)d + (2j+1)  \\ 
\hline
\vdots & \vdots & \vdots & \vdots \\
\hline
\end{array}
\]
\end{figure}

Let $x_i$ and $y_i$ be the $i$th smallest elements of $S_d$ and $T_{4,d}$, respectively.  We now prove that $x_i \geq y_i$ by considering the $j$th rows of the tables in Figures \ref{S_delements} and \ref{SpecificT_r}.  Each of the four cases follows in a straightforward manner from the fact that $d \geq 31$. Thus a direct application of Theorem \ref{thm:andrews ST} concludes the proof of this case.  

\underline{Case II:}
Let $d = 2^s-1$ for $s \geq 4$, and $n\geq 1$.  By Andrews \cite[Theorem 1]{andrews_partition_1971}, it is known that 
\[
q_d^{(1)}(n) \geq \mathcal{L}_d(n),
\]
where $\mathcal{L}_d(n)$ counts the number of partitions of $n$ into distinct parts $\equiv 1,2, \ldots, 2^{s-1} \pmod{d}$. Since
\begin{align*}
\sum_{n = 0}^{\infty} \mathcal{L}_d(n) q^{n} 
&= (-q;q^d)_\infty \pochneg{2}{d} \cdots \pochneg{2^{s - 1}}{d}\\
&=\frac{(q^2;q^{2d})_\infty}{ (q;q^{2d})_\infty (q^{d+1};q^{2d})_\infty} \cdots \frac{(q^{2^s};q^{2d})_\infty}{ (q^{2^{s-1}};q^{2d})_\infty (q^{d+2^{s-1}};q^{2d})_\infty} \\
&= \frac{1}{(q;q^{2d})_\infty (q^{d+2};q^{2d})_\infty \cdots (q^{d+2^{s-1}};q^{2d})_\infty},
\end{align*}
we see that in fact for all $n$,
\begin{equation}\label{eq:Andrewsfirst}
q_d^{(1)}(n) \geq  \mathcal{L}_d(n)  = \rho(T_{s,d}; n), 
\end{equation}
where $T_{s,d} := \{y \in \N \mid y \equiv 1, d+2, \cdots, d+2^{s-1} \pmod{2d} \}$.  Let $S_d$ be defined as before in \eqref{eq:Sdef} so that $\rho(S_d; n) =  Q_{d-2}^{(1, -)}(n)$.  Then to finish the proof it suffices to show $\rho(T_{s,d}; n) \geq \rho(S_d; n)$.  This follows from the previous case, since the sets $T_{4,d}$, $S_d$ are exactly those appearing in that setting. 
    
\underline{Case III:}
Let $d \geq 15$ and $1\leq n < 4d + 2^{r}$, where $r$ is defined in \eqref{eq:r def} with $a = 1$.  It thus follows that $n \leq 5d$.  Again we let $S_d$ be defined as in \eqref{eq:Sdef} so that $\rho(S_d; n) =  Q_{d-2}^{(1, -)}(n)$.  Here we will prove the lemma in cases depending on the size of $n$ in relation to the sizes of available parts in $S_d$. 

We write $\lambda^x$ to denote that the part $\lambda$ appears $x$ times as a part in a partition.  Moreover, observe that $\q{d}{1}{n}$ is a (weakly) increasing function of $n$, since adding $1$ to the largest part of any partition of $n$ counted by $\q{d}{1}{n}$ yields a partition of $n+1$ counted by $\q{d}{1}{n+1}$.  Similarly $Q_{d-2}^{(1,-)}(n)$ is a (weakly) increasing function of $n$, since adding one part of size $1$ to any partition of $n$ counted by $Q_{d-2}^{(1,-)}(n)$ will yield a partition of $n+1$ counted by $Q_{d-2}^{(1,-)}(n+1)$.

First consider the case when $1\leq n \leq d+1$.  The only available part in $S_d$ in this case is $1$, so $Q_{d-2}^{(1,-)}(n)=1$.  Also $q_d^{(1)}(n)= 1$, since it only counts the partition $(n)$.
    
When $d+2 \leq n \leq d + 3$, it follows that $\Qdash{d-2}{1}{n} = 2$, counting the partitions $(n)$ and $(1^n)$. Likewise $\q{d}{1}{n} = 2$, as it will count the partitions $(n)$ and $(1, n-1)$. 
    
Now consider $d+4 \leq n \leq 2d +2$. Here $\Qdash{d-2}{1}{n} \in \{2, 3\}$, counting $(1^n)$, $(1^{n - d -2}, (d + 2))$, and possibly $(1^{n - 2d -1},(2d - 1))$.  However, $\q{d}{1}{n} \geq \q{d}{1}{d+4} = 3$, since $\q{d}{1}{d+4}$ counts the partitions $(d+4)$, $(1,(d + 3))$, and $(2, (d + 2))$. 
     
We now consider $2d + 3 \leq n \leq 4d + 1$.  Here $\Qdash{d-2}{1}{n} \leq \Qdash{d-2}{1}{4d+1}=10$, which we calculate by observing that there is exactly one partition of $4d+1$ with parts in $S_d$ and largest part equal to each of $3d+4$, $3d+2$, and $1$, there are two each with largest part $2d+3$ and $2d+1$, and three with largest part $d+2$.  Thus since $\q{d}{1}{n} \geq \q{d}{1}{2d+3}$, it suffices to show that $\q{d}{1}{2d+3} \geq 10$.  We can see this by observing that since $d\geq 15$, the partitions $(2d+3)$, $(1, 2d+2)$, $(2, 2d+1)$, $\ldots$, $(9, 2d-6)$ are all counted by $\q{d}{1}{2d+3}$. 

Lastly, we consider $4d+2 \leq n \leq 5d$.  Here $\Qdash{d-2}{1}{n} \leq \Qdash{d-2}{1}{5d}=20$, which we calculate as above in terms of largest parts.  There is one partition of $5d$ with parts in $S_d$ for each largest part in $\{4d+5, 4d+3, 1\}$, two for each largest part in $\{3d+4, 3d+2\}$, five with largest part $2d+3$, and four for each largest part in $\{2d+1, d+2\}$.  Here $\q{d}{1}{n} \geq \q{d}{1}{4d+2}$, so it suffices to show that $\q{d}{1}{4d+2} \geq 20$.  We can see this by observing that since $d\geq 15$, the partitions $(4d+2)$, $(1, 4d+1)$, $(2, 4d)$, $\ldots$, $(19, 4d-17)$ are all counted by $\q{d}{1}{4d+2}$. 
\end{proof} 

To prove Theorem \ref{thm:stringarbitrary} we will make use of the following analogous result. 

\begin{proposition}\label{prop:k-3andmdashdashlemma}
If $d=15$ or $d \geq 31$, then for $n \geq 1$, 
\[
\q{d}{1}{n} \geq \Qdashdash{d-3}{1}{n}.
\]
\end{proposition}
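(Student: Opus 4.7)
The plan is to follow closely the three-case proof of Proposition \ref{prop:k-2andmlemma}, changing only the target set. Define
\[
S'_d := \{x \in \N \mid x \equiv \pm 1 \!\!\! \pmod{d}\} \setminus \{1,\, d - 1\},
\]
so that $\rho(S'_d; n) = \Qdashdash{d-3}{1}{n}$, and let $r$ be defined by \eqref{eq:r def} with $a = 1$. The argument will split into the same three cases as Proposition \ref{prop:k-2andmlemma}.

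For Case I, assume $d \geq 31$, $d \neq 2^r - 1$, and $n \geq 4d + 2^r$. The Yee lower bound \eqref{eq:Yeefirst} still gives $\q{d}{1}{n} \geq \rho(T_{r-1,d}; n)$, and as in the earlier proof it suffices to compare $T_{4,d}$ term-by-term to $S'_d$. Arranged in rows of four by increasing size, row $j$ of $S'_d$ reads
\[
(2j-1)d+1,\ 2jd-1,\ 2jd+1,\ (2j+1)d-1,
\]
which I would compare column by column to the row $(2j-2)d+1,\ (2j-1)d+2,\ (2j-1)d+4,\ (2j-1)d+8$ of $T_{4,d}$. The four inequalities reduce respectively to $d \geq 1$, $d \geq 4$, $d \geq 4$, and $2d \geq 10$, all of which are easily satisfied when $d \geq 31$; Theorem \ref{thm:andrews ST} then yields $\rho(T_{4,d}; n) \geq \rho(S'_d; n)$. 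Case II, with $d = 2^s - 1$ for $s \geq 4$, is handled identically using the Andrews bound \eqref{eq:Andrewsfirst}, since $T_{s,d}$ term-wise dominates $T_{4,d}$.

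For Case III, take $d = 15$ or $d \geq 31$ with $1 \leq n < 4d + 2^r \leq 5d$. Since the smallest element of $S'_d$ is $d+1$, we have $\rho(S'_d; n) = 0$ for $n \leq d$, so the inequality is trivial there. For $d+1 \leq n \leq 5d$, every part of any partition counted by $\rho(S'_d; n)$ exceeds $d$, so such a partition uses at most four parts; enumerating by largest part and using the constraint that the number of parts $\equiv +1 \pmod d$ and the number $\equiv -1 \pmod d$ differ by exactly $n \bmod d$ (which forces a very restricted shape when the total part count is at most $4$) shows that $\rho(S'_d; n)$ is bounded by a small absolute constant throughout this range. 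The matching lower bound on $\q{d}{1}{n}$ will follow by exhibiting enough two-part partitions $(j, n-j)$ with $n - 2j \geq d$, which are abundant once $d \geq 15$ and $n$ is at least a small multiple of $d$; the handful of remaining small values of $n$ can be checked directly.

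The main obstacle is the bookkeeping in Case III, especially for the $d = 15$ case, where one must verify that $\rho(S'_d; n)$ is genuinely small in every subrange of $n \leq 5d$ while pinning down a concrete count of admissible two-part partitions from below. Happily, the Case I table comparison is cleaner here than in Proposition \ref{prop:k-2andmlemma}, because excluding the additional part $1$ from $S'_d$ pushes all of its elements strictly further out than those of the corresponding set $S_d$, leaving extra slack in each column-wise comparison with $T_{4,d}$.
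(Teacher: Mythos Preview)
Your plan is correct and follows the same three-case structure as the paper's proof. The only differences are that the paper streamlines Cases I and II by invoking Corollary~\ref{cor:STgeneral} directly with $t=0$ and $b=d-3$ (which applies precisely because two parts, not one, are excluded from $S'_d$---exactly the observation you make at the end), and in Case III the paper carries out the enumeration explicitly to obtain the uniform bound $\Qdashdash{d-3}{1}{n}\leq 3$ for $1\leq n\leq 5d$, which is then beaten by $\q{d}{1}{n}\geq \q{d}{1}{d+4}=3$ for $n\geq d+4$.
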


\begin{proof} 

As in the proof of Proposition \ref{prop:k-2andmlemma} we prove the result in three cases.

\underline{Case I:}
Let $d \geq 31$, $d \neq 2^{r} - 1$ with $r$ defined in \eqref{eq:r def} with $a = 1$, and $n \geq 4d + 2^r$. It follows that $r\geq 5$.  Recall from \eqref{eq:Yeefirst} that 
\[
\q{d}{1}{n} \geq \rho(T_{r-1,d}; n),
\]
where
\[
T_{r-1,d} = \{y \in \N \mid y \equiv 1, d+2, \ldots, d+ 2^{r-2} \!\!\! \pmod{2d} \}.
\]
Furthermore, let
\begin{equation}\label{eq:Sdef2}
S_{1,d-3} := \{x \in \N \mid x \equiv \pm 1 \!\!\! \pmod{d}\} \backslash \{1, d-1 \},
\end{equation}
so that $\rho(S_{1,d-3}; n) =  Q_{d-3}^{(1, -)}(n)$.  Thus it suffices to show that $\rho(T_{r-1,d}; n) \geq \rho(S_{1,d-3}; n)$, which follows directly from Corollary \ref{cor:STgeneral} by setting $t=0$.

\underline{Case II:}
Let $d = 2^{s}- 1$ for $s \geq 4$, and $n\geq 1$.  Recall from \eqref{eq:Andrewsfirst} that
\[
q_d^{(1)}(n) \geq \rho(T_{s,d}; n), 
\]
where $T_{s,d} := \{y \in \N \mid y \equiv 1, d+2, \cdots, d+2^{s-1} \pmod{2d} \}$.  Let $S_{1,d-3}$ be defined as before in \eqref{eq:Sdef2} so that $\rho(S_{1,d-3}; n) =  Q_{d-3}^{(1, -)}(n)$.  Thus it suffices to show that $\rho(T_{s,d}; n) \geq \rho(S_{1,d-3}; n)$, which again follows directly from Corollary \ref{cor:STgeneral} with $t=0$.

\underline{Case III:} 
Let $d \geq 5$ and $1\leq n < 4d + 2^{r}$, where $r$ is defined in \eqref{eq:r def} with $a = 1$.  It thus follows that $n \leq 5d$.  Again we let $S_{1,d-3}$ be defined as in \eqref{eq:Sdef2} so that $\rho(S_{1,d-3}; n) =  Q_{d-3}^{(1, -)}(n)$.  As in the proof of Proposition \ref{prop:k-2andmlemma}, we prove this lemma in cases depending on the size of $n$.  While $\q{d}{1}{n}$ is a (weakly) increasing function of $n$, here $Q_{d-3}^{(1,-,-)}(n)$ is not.  However, we can calculate all the partitions counted by $Q_{d-3}^{(1,-,-)}(n)$ in the range $1\leq n \leq 5d$ in a straightforward manner grouping by largest part.  For each possible largest part, the values which can be partitioned into parts from $S_{1,d-3}$ are distinct, and are listed in Figure \ref{fig:counting}.

\begin{figure}[h!]\caption{Values partitioned into parts from $S_{1,d-3}$ for $1\leq n \leq 5d$}\label{fig:counting} 
\[
\begin{array}{|c|c|}
\hline
\text{largest part} & \text{distinct values of }n \\
\hline
4d+1 & 4d+1 \\ 
\hline
4d-1 & 4d-1, 5d \\
\hline
3d+1 & 3d+1, 4d+2, 5d \\ 
\hline
3d-1 & 3d-1, 4d, 5d-2, 5d \\
\hline
2d+1 & 2d+1, 3d+2, 4d, 4d+2, 4d+3 \\ 
\hline
2d-1 & 2d-1, 3d, 4d+1, 4d-2 \\
\hline
d+1 & d+1, 2d+2, 3d+3, 4d+4 \\ 
\hline
\end{array}
\]
\end{figure}

From Figure \ref{fig:counting}, we see that $Q_{d-3}^{(1,-,-)}(n)\leq 3$ for $1\leq n \leq 5d$. When $d+4 \leq n \leq 5d$, $\q{d}{1}{n} \geq \q{d}{1}{d+4} = 3$, since $\q{d}{1}{d+4}$ counts the partitions $(d+4)$, $(1,(d + 3))$, and $(2, (d + 2))$. Furthermore when $1 \leq n \leq d+3$, $\q{d}{1}{n} \geq 1$ and $Q_{d-3}^{(1,-,-)}(n)\leq 1$. Thus the lemma is proved.
\end{proof}

\section{A proof of Kang and Park's conjecture for $d\geq 62$}\label{sec:kang and park conjecture section}

In this section we first prove Theorem \ref{thm:final kp thm}, utilizing Lemmas \ref{lem:STlemma}, \ref{lem:qstarlemma}, and \ref{lem:Qidentitylemma}, as well as Proposition \ref{prop:k-2andmlemma}.  We then provide a partial result toward Conjecture \ref{conj:schurconj}.

\begin{proof}[Proof of Theorem \ref{thm:final kp thm}]
Let $d \geq 62$ and $n \geq 1$.  We observe that if $n \leq d+3$, then $2$ is the only possible part in a partition counted by $\Qdash{d}{2}{n}$, and thus $\Qdash{d}{2}{n}\leq 1 = q_d^{(2)}(n)$.

Now let $n\geq d+4$.  We approach the proof by cases, considering the parities of $d$ and $n$.

First, we suppose $d$ is odd.  If $n$ is also odd, then $\Qdash{d}{2}{n}=0$, so $\Delta_d^{(2)}(n)\geq 0$ trivially in this case.  If $n=2n'$ is even, then 
\[
\q{d}{2}{2n'} \geq \q{\frac{d+1}{2}}{1}{n'} \geq \Qdash{\frac{d-3}{2}}{1}{n'} = \Qdash{d}{2}{2n'},
\]
where the first inequality follows directly from Lemma \ref{lem:qstarlemma}, the second from Proposition \ref{prop:k-2andmlemma}, and the third from Lemma \ref{lem:Qidentitylemma}. 

We now suppose $d$ is even.  If $n=2n'$ is also even, then we first observe that 
\[
\q{d}{2}{2n'} \geq \q{\frac{d}{2}}{1}{n'} \geq \Qdash{\frac{d}{2} - 2}{1}{n'} = \Qdash{d-1}{2}{2n'}, 
\]
where again the first inequality follows directly from Lemma \ref{lem:qstarlemma}, the second from Proposition \ref{prop:k-2andmlemma}, and the third from Lemma \ref{lem:Qidentitylemma}.  It remains to show $\Qdash{d-1}{2}{2n'} \geq \Qdash{d}{2}{2n'}$, which we do with an application of Lemma \ref{lem:STlemma}.  Let
\begin{align}\label{eq:STdefs}
S &= \{ x\in \N \mid x \equiv \pm 2 \!\!\! \pmod{d+3} \} \backslash \{d+1\}, \\
T &= \{ y\in \N \mid y \equiv \pm 2 \!\!\! \pmod{d+2} \} \backslash \{d\}, \nonumber
\end{align}
so that $\rho(S;2n') = \Qdash{d}{2}{2n'}$ and $\rho(T;2n') =\Qdash{d-1}{2}{2n'}$.  Ordering the elements of $S$ and $T$ by size, we have 
\begin{align*}
S &= \{ 2, d+5, 2d+4, 2d+8,  \ldots, jd+3j-2 , jd+ 3j+2,  \ldots \}, \\
T &= \{2, d+4, 2d+2, 2d+6, \ldots, jd+2j-2 , jd+ 2j+2,  \ldots \},
\end{align*}
so it is easy to see that the hypotheses of Lemma \ref{lem:STlemma} are satisfied with $m=2$.  Thus it follows that $\Qdash{d-1}{2}{2n'} \geq \Qdash{d}{2}{2n'}$ as desired. 

In the case when $n=2n'-1$ is odd, while $d$ is even, we observe that 
\[
\q{d}{2}{2n'-1} \geq \q{\frac{d}{2}}{1}{n'} \geq \Qdash{\frac{d}{2} - 2}{1}{n'} = \Qdash{d-1}{2}{2n'},
\]
where the first inequality follows directly from Lemma \ref{lem:qstarlemma}, the second from Proposition \ref{prop:k-2andmlemma}, and the third from Lemma \ref{lem:Qidentitylemma}.  It remains to show that $\Qdash{d-1}{2}{2n'} \geq \Qdash{d}{2}{2n'-1}$, which we do by constructing an injection from the set $X$ of partitions counted by $\Qdash{d}{2}{2n'-1}$ to the set $Y$ of partitions counted by $\Qdash{d-1}{2}{2n'}$.  As in the previous case, let $S=\{x_i\}_{i=1}^\infty$, $T=\{y_i\}_{i=1}^\infty$ be increasing, where $S,T$ are defined in \eqref{eq:STdefs}. Suppose $\lambda\in X$ has parts $x_{i_j}\in S$, $j\geq 1$.  Then $2n'-1 = \sum_{j\geq 1} x_{i_j}$ is odd, so since each $y_i$ is even it follows that $\sum_{j\geq 1} (x_{i_j}-y_{i_j})$ is also odd.  Let $\beta$ be the positive integer with $\sum_{j\geq 1} (x_{i_j}-y_{i_j})=2\beta-1$.  We then define $\varphi : X \rightarrow Y$ by defining $\varphi(\lambda)$ to be the partition with corresponding parts $y_{i_j}\in T$, plus $\beta$ additional parts of size $2$.  Then $\varphi(\lambda)$ is a partition of 
\[
\sum_{j\geq 1} y_{i_j} + 2\beta = \sum_{j\geq 1} x_{i_j} + 1 = 2n',
\]
so $\varphi(\lambda)\in Y$.  Moreover, it is easy to see that $\varphi$ is an injection, since for any $\lambda\neq \lambda'\in X$, there must exist a part $x_i>2$ that occurs in $\lambda$ a different number of times than it does in $\lambda'$.  But then $y_i$ must occur in $\varphi(\lambda)$ a different number of times than it does in $\varphi(\lambda')$ which implies $\varphi(\lambda) \neq \varphi(\lambda')$.   
\end{proof}

We conclude this section with a partial result toward Conjecture \ref{conj:schurconj}.

\begin{proposition}\label{prop:schurprop}
For $d\geq 31$ and $n \geq d+6$, 
\[ 
\Ddasha{3d}{3}{n} \geq 0. 
\]
\end{proposition}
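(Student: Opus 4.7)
The plan is to mirror the proof of Theorem \ref{thm:final kp thm}, chaining Lemma \ref{lem:qstarlemma}, Proposition \ref{prop:k-2andmlemma}, and Lemma \ref{lem:Qidentitylemma} with the parameter $a=3$ in place of $a=2$.  The first step is to observe that every admissible part counted by $\Qdash{3d}{3}{n}$ lies in $\{x \equiv \pm 3 \!\!\!\pmod{3d+3}\} \setminus \{3d\}$, and since $3 \mid 3d+3$ every such part is divisible by $3$.  Consequently $\Qdash{3d}{3}{n} = 0$ whenever $3 \nmid n$, so the inequality is immediate in that residue class and it suffices to treat $n = 3n'$.

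For $n = 3n' \geq 3d+6$, I would invoke Lemma \ref{lem:qstarlemma} with parameters $(a,d,n) \mapsto (3, 3d, 3n')$, whose hypothesis $n \geq d + 2a$ becomes $3n' \geq 3d+6$, to obtain
\[
\q{3d}{3}{3n'} \geq \q{d}{1}{n'}.
\]
Since $d \geq 31$, Proposition \ref{prop:k-2andmlemma} then yields $\q{d}{1}{n'} \geq \Qdash{d-2}{1}{n'}$.  Finally, Lemma \ref{lem:Qidentitylemma} applied with $a=3$ to $3d$ (using $3 \mid 3d+3$, so that $(3d+3)/3 - 3 = d-2$) gives $\Qdash{d-2}{1}{n'} = \Qdash{3d}{3}{3n'}$.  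Chaining these three facts establishes the inequality in this range.

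The residual range is $d+6 \leq n < 3d+6$ with $3 \mid n$.  Here the only admissible part of $\Qdash{3d}{3}{n}$ not exceeding $n$ is $3$ itself, since the next allowable part is $3d+6 > n$; hence $\Qdash{3d}{3}{n} = 1$, witnessed by $(3^{n/3})$.  On the other side $\q{3d}{3}{n} \geq 1$ via the one-part partition $(n)$, so the inequality holds.  I do not anticipate a serious obstacle: the parity-injection argument needed in the proof of Theorem \ref{thm:final kp thm} to cross between even and odd $n$ does not arise here, because $3$ divides $3d+3$ exactly and the problem therefore splits cleanly by residue modulo $3$, while the small-$n$ range is dispatched trivially by counting available parts.
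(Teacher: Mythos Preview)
Your proof is correct and matches the paper's approach: split by residue modulo $3$, and for $n=3n'$ chain Lemma \ref{lem:qstarlemma}, Proposition \ref{prop:k-2andmlemma}, and Lemma \ref{lem:Qidentitylemma}. Your extra treatment of the range $d+6 \leq n < 3d+6$ (to respect the stated hypothesis $n \geq d+2a$ of Lemma \ref{lem:qstarlemma}) is a careful addition the paper omits; the paper applies the lemma for all $n=3n'$ directly, which is justified since the proof of Lemma \ref{lem:qstarlemma} shows its conclusion holds whenever $a\mid n$ without the size constraint, but your explicit handling is equally valid.
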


\begin{proof}
First observe that if $n$ is not divisible by $3$, then $\Qdash{3d}{3}{n} = 0$ so $\Ddasha{3d}{3}{n}$ is trivially nonnegative.  If $n=3n'$, then 
\[
\q{3d}{3}{3n'} \geq \q{d}{1}{n'} \geq \Qdash{d-2}{1}{n'} =\Qdash{3d}{3}{3n'},
\]
where the first inequality follows directly from Lemma \ref{lem:qstarlemma}, the second from Proposition \ref{prop:k-2andmlemma}, and the third from Lemma \ref{lem:Qidentitylemma}. 
\end{proof}

\section{A generalization of Kang and Park's conjecture}\label{sec:generalized kang and park conjecture section}

In this section we use three different methods to prove partial results toward Conjecture \ref{conj:Generalized KP Conjecture}. We begin by adapting methods developed in the proof of Theorem \ref{thm:final kp thm} to prove Theorem \ref{thm:stringarbitrary}. We then use methods of Andrews \cite{andrews_partition_1971} to prove Theorem \ref{thm:andrewsarbitrary} and methods of Yee \cite{yee_alders_2008} to prove Theorem \ref{thm:yeearbitrary}. 

\begin{proof}[Proof of Theorem \ref{thm:stringarbitrary}.]
Let $a \geq 3$, and $d\geq 31a - 3$, such that $a$ divides $d+3$.  We observe that if $n \leq d+2a-1$, then $d+3+a$ is the only possible part in a partition counted by $\Qdashdash{d}{a}{n}$, and thus $\Qdashdash{d}{a}{n}\leq 1 \leq q_d^{(a)}(n)$.  Now let $n\geq d+2a$.  We first observe for $n$ that are not divisible by $a$, $\Qdashdash{d}{a}{n} = 0$, and thus $\Ddashdasha{d}{a}{n} \geq 0$ is trivially true.  It thus suffices to consider $n=an'$, for $n' \geq 1$.  When $a\geq 4$, then $\lceil \frac{d}{a} \rceil = \frac{d+3}{a}$.  Thus,
\[
\q{d}{a}{an'} \geq \q{\frac{d+3}{a}}{1}{n'} \geq \Qdashdash{\frac{d+3}{a}-3}{1}{n'} =\Qdashdash{d}{a}{an'},
\]
where the first inequality follows directly from Lemma \ref{lem:qstarlemma}, the second from Proposition \ref{prop:k-3andmdashdashlemma}, and the third from Lemma \ref{lem:Qidentitylemma}. 

If $a=3$, then $d$ is divisible by $3$, so we have
\[
\q{d}{3}{3n'} \geq \q{\frac{d}{3}}{1}{n'} \geq \q{\frac{d}{3}+1}{1}{n'} \geq  \Qdashdash{\frac{d}{3}-2}{1}{n'}=\Qdashdash{d}{3}{3n'},
\] 
using Lemma \ref{lem:qstarlemma}, Proposition \ref{prop:k-3andmdashdashlemma}, Lemma \ref{lem:Qidentitylemma}, and the definition of $\q{d}{a}{n}$.
\end{proof}

In order to prove Theorems \ref{thm:andrewsarbitrary} and \ref{thm:yeearbitrary} we require a useful theorem of Andrews \cite{andrews_partition_1971}, stated below\footnote{The statement of Theorem \ref{thm:andrewsDandEsets} is adjusted to our specific context.}, and the following notation.  For integers $0\leq k \leq \ell$ and $d\geq 2^\ell - 2^k$, let
\begin{align}
A_{d,k,\ell} &= \{x \in \N \mid x \equiv 2^i \!\!\! \pmod{d}, k \leq i \leq \ell-1\}, \nonumber \\
A_{d,k,\ell}' &= \{y \in \N \mid y \equiv 2^k i  \!\!\! \pmod{d}, 1 \leq i \leq 2^{\ell-k} - 1\}. \label{A'def}
\end{align}
Let $\beta_d(x)$ be the least positive residue of $x$ modulo $d$, let $b(x)$ be the number of (nonzero) terms appearing in the binary representation of $x$, and let $\nu(x)$ be the least $2^i$ in this representation.

\begin{theorem}[Andrews \cite{andrews_general_1969}, 1969] \label{thm:andrewsDandEsets}
Using the notation above, let $D(A_{d,k,\ell};n)$ denote the number of partitions of $n$ into distinct parts taken from $A_{d,k,\ell}$, and let $E(A_{d,k,\ell}';n)$ be the number of partitions of $n$ into parts taken from $A_{d,k,\ell}'$ of the form $n = \la_1 + \la_2 + \cdots + \la_s$, such that 
\[
\la_{i + 1} - \la_{i} \geq d \cdot b(\beta_d(\la_i)) + \nu(\beta_d(\la_i))- \beta_d(\la_i). 
\]
Then $D(A_{d,k,\ell};n)=E(A_{d,k,\ell}';n)$.
\end{theorem}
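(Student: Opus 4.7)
The plan is to establish $D(A_{d,k,\ell};n) = E(A'_{d,k,\ell};n)$ by computing the generating function of the left-hand side directly and constructing an explicit bijection from partitions counted by $E$ to those counted by $D$. The hypothesis $d \geq 2^\ell - 2^k$ forces the residues $2^i \pmod d$ for $k \leq i \leq \ell-1$ to be pairwise distinct, since any coincidence $2^i \equiv 2^j \pmod d$ would require $d \mid 2^j - 2^i$ with $0 < |2^j - 2^i| < d$. Consequently $A_{d,k,\ell}$ is the disjoint union of the arithmetic progressions $\{2^i + md : m \geq 0\}$, and
\[
\sum_n D(A_{d,k,\ell};n) q^n = \prod_{i=k}^{\ell-1}(-q^{2^i};q^d)_\infty.
\]

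For the bijection, given $\pi = (\lambda_1 < \lambda_2 < \cdots < \lambda_s)$ counted by $E$, I would write the binary expansion $\beta_d(\lambda_i) = 2^{j^{(i)}_1} + \cdots + 2^{j^{(i)}_{t_i}}$ with $k \leq j^{(i)}_1 < \cdots < j^{(i)}_{t_i} \leq \ell-1$, and set $m_i = (\lambda_i - \beta_d(\lambda_i))/d \geq 0$. The idea is to \emph{explode} each $\lambda_i$ into $t_i$ parts of the form $2^{j^{(i)}_u} + c^{(i)}_u d \in A_{d,k,\ell}$, with $\sum_u c^{(i)}_u = m_i$. The coefficients $c^{(i)}_u$ cannot be chosen naively: concentrating all excess on a single exponent produces collisions in small examples where consecutive $\lambda_i$'s share a residue class. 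They must instead be distributed so that, across all $i$, each residue class $2^j \pmod d$ contributes parts at strictly increasing levels of $d$-multiples.

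The main obstacle is specifying a canonical choice of $c^{(i)}_u$ that yields global distinctness and admits a clean inverse. A natural candidate is a greedy rule: process $\lambda_1, \lambda_2, \ldots$ in order and, for each $\lambda_i$, assign $c^{(i)}_u$ so that $2^{j^{(i)}_u} + c^{(i)}_u d$ is the smallest representative of residue class $2^{j^{(i)}_u} \pmod d$ not yet used, subject to $\sum_u c^{(i)}_u = m_i$. The difference condition, rewritten in this binary notation as
\[
\lambda_{i+1} \geq \nu(\beta_d(\lambda_i)) + \bigl(m_i + b(\beta_d(\lambda_i))\bigr) d,
\]
should furnish precisely enough slack to guarantee the procedure succeeds, by forcing $\lambda_{i+1}$ to contain enough excess multiples of $d$ that its exploded parts lie strictly above the previous ones within each shared residue class. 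The technical heart of the proof is verifying this claim and exhibiting the inverse, which scans the partition in $D$ from smallest part upward and groups parts into blocks by monitoring the tracked maxima in each residue class. As an alternative, one might bypass the bijection entirely: peel off the smallest part of a partition counted by $E$ to derive a recursion on its generating function and check that the recursion telescopes into $\prod_{i=k}^{\ell-1}(-q^{2^i};q^d)_\infty$; this would likely be shorter but more opaque than the bijective proof.
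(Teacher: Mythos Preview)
The paper does not supply its own proof of this theorem; it is quoted from Andrews's 1969 paper and used as a black box. So there is no in-paper argument to compare against, and your task, if you want a self-contained proof, is to reproduce Andrews's result.

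What you have written is a plan rather than a proof, and the plan has a genuine gap at exactly the point you flag. Your greedy rule ``assign $c^{(i)}_u$ so that $2^{j^{(i)}_u} + c^{(i)}_u d$ is the smallest unused representative in its residue class, subject to $\sum_u c^{(i)}_u = m_i$'' is overdetermined as stated: choosing the smallest unused level in each class fixes every $c^{(i)}_u$ individually, and there is no reason a priori that these forced values sum to $m_i$. You then say the difference condition ``should furnish precisely enough slack'' for this to work out, but you neither prove that the forced values do sum to $m_i$, nor describe how to redistribute if they do not, nor construct the inverse. That is the entire content of the theorem, so at present the bijective route is an outline with the hard step missing. (A correct bijection along these lines does exist, but the bookkeeping is more delicate than a one-line greedy rule; one must track, for each residue class, how many parts have already been placed and show inductively that the difference inequality forces $m_{i+1}$ to be large enough to clear all previously occupied levels.)

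Your closing remark is the more efficient path and is in fact what Andrews does: set up a recursion for the $E$-generating function by splitting on the smallest part and show it collapses to $\prod_{i=k}^{\ell-1}(-q^{2^i};q^d)_\infty$. If you want a complete argument, I would pursue that alternative rather than the bijection.
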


\begin{proof}[Proof of Theorem \ref{thm:andrewsarbitrary}]

Let $1\leq a \leq d+2$, where $d = 2^s-2^{t}$ for some $t \geq \log_2(a)$ and $s \geq t + 4$.  Consider $A_{d,t,s}'$ as defined in \eqref{A'def}.  Then any partition counted by $E(A_{d,t,s}';2^t n)$ has parts at least $2^t \geq a$, and the difference between parts satisfies
\begin{equation}\label{eq:diffcond}
\la_{i + 1} - \la_{i} \geq d \cdot b(\beta_d(\la_i)) + \nu(\beta_d(\la_i))- \beta_d(\la_i). 
\end{equation}
Let $\lambda_i$ be a part of such a partition counted by $E(A_{d,t,s}';2^t n)$.  If $\beta_d(\la_i) = 2^j$ for some $t\leq j \leq s-1$, then $\nu(2^j)=\beta(2^j)$, so \eqref{eq:diffcond} implies that $\la_{i + 1} - \la_{i} \geq d$.  If $\beta_d(\la_i) \neq 2^j$ for some $t\leq j \leq s-1$, then $b(\beta_d(\la_i))\geq 2$.  Since $\beta_d(\la_i) \leq 2^s-2^t=d$, \eqref{eq:diffcond} again implies that $\la_{i + 1} - \la_{i} \geq d$.  Thus we can conclude by Theorem \ref{thm:andrewsDandEsets} that 
\begin{equation}\label{eq:andrewsineq1}
q_d^{(a)}(2^tn) \geq E(A_{d,t,s}';2^t n) = D(A_{d,t,s};2^t n),
\end{equation}
where $D(A_{d,t,s};2^t n)$ counts the number of partitions of $n$ into distinct parts from $A_{d,t,s}$, as defined in \eqref{A'def}. Since
\begin{align*}
\sum_{n = 0}^{\infty} D(A_{d,t,s};n) q^{n} 
&= \pochneg{2^{t}}{d} \pochneg{2^{t+1}}{d} \cdots \pochneg{2^{s - 1}}{d}\\
&=\frac{\poch{2^{t+1}}{2d}}{\poch{2^{t}}{2d} \poch{d + 2^{t}}{2d}} \cdots \frac{\poch{2^{s}}{2d}}{\poch{2^{s - 1}}{2d}\poch{d + 2^{s - 1}}{2d}}\\
&= \frac{1}{\poch{2^{t}}{2d}\poch{d + 2^{t + 1}}{2d} \cdots \poch{d + 2^{s-1}}{2d}},
\end{align*}
we see that in fact for all $n$,
\begin{equation}\label{eq:andrewsineq2}
D(A_{d,t,s};n) = \rho(T_{t,s,d}; n), 
\end{equation}
where $T_{t,s,d} := \{y \in \N \mid y \equiv 2^{t}, d+2^{t+1}, \cdots, d+2^{s-1} \pmod{2d} \}$.   

Now define
\[
S_{a,d} := \{x \in \N \mid x \equiv \pm a \!\!\! \pmod{d+3}\} \backslash \{ a, d+3-a \},
\]
so that $\rho(S_{a,d}; n) =  \Qdadashdash$.  From \eqref{eq:andrewsineq1} and \eqref{eq:andrewsineq2} we see that to finish the proof it suffices to show $\rho(T_{t,s,d}; 2^t n) \geq \rho(S_{a,d}; 2^t n)$.  This follows directly from Corollary \ref{cor:STgeneral} by setting $b=d$, which completes the proof.
\end{proof} 

\subsection{The proof of Theorem \ref{thm:yeearbitrary}}

In this subsection we modify a method of Yee \cite{yee_alders_2008} to prove Theorem \ref{thm:yeearbitrary}.  We do this by proving, under the hypotheses of the theorem, the following series of inequalities 
\begin{equation*}
q_{d}^{(a)} (2^{a-1}n) 
\geq \mathcal{L}_d^{(a)}(2^{a-1}n) + \mathcal{L}_d^{(a)}(2^{a-1}n-2^r)
\geq \mathcal{K}_d^{(a)}(2^{a-1}n)
\geq \mathcal{G}_d^{(a)}(2^{a-1}n)
\geq Q_d^{(a,-,-)}(2^{a-1}n),
\end{equation*}
where
\begin{align}\label{fkg_defs}
\fda &:= \sum_{n = 0}^{\infty} \Lda q^n := \pochneg{2^{a-1}}{d}\pochneg{2^{a}}{d} \cdots \pochneg{2^{r-1}}{d}, \nonumber \\
\kda &:= \sum_{n = 0}^{\infty} \Kda q^n := \frac{1-q^{d+2^{a-1}}}{1-q^{2^{a-1}}} \pochneg{d+2^{a - 1}}{d} \pochneg{d+2^{a}}{d}\cdots \pochneg{d+2^{r-1}}{d}, \\
\gda &:= \sum_{n = 0}^{\infty} \Gda q^n := \frac{\pochneg{d+2^{r-1}}{2d}} {\poch{2^{a-1}}{2d}\poch{d+2^{a}}{2d}\cdots \poch{d+2^{r-2}}{2d}}. \nonumber 
\end{align}

Since the first and third inequality above require a substantial combinatorial argument, we first address those as separate lemmas.  Our first lemma is as follows.

\begin{lemma}\label{yee2.7analog}
Let $a\geq 3$ and $d \geq 2^{a+2}$, where $d\not= 2^r - 2^{a-1}$ with $r$ defined by \eqref{eq:r def}.  Then for all $n\geq 2d+2^r + 2^{a-1}-1$, 
\[
\q{d}{a}{n} \geq \L{a}{n} + \L{a}{n - 2^r}. 
\]
\end{lemma}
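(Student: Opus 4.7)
The plan is to construct an injection from the disjoint union of the sets of partitions counted by $\L{a}{n}$ and $\L{a}{n-2^r}$ into the set of partitions counted by $\q{d}{a}{n}$, adapting the argument of Yee \cite[Lemma 2.7]{yee_alders_2008} from the setting $a=1$. The starting point is Theorem \ref{thm:andrewsDandEsets} applied with $k=a-1$ and $\ell=r$: the defining product for $\f{a}$ is exactly the generating function of $D(A_{d,a-1,r};\cdot)$, so $\L{a}{n}=E(A_{d,a-1,r}';n)$. As verified in the proof of Theorem \ref{thm:andrewsarbitrary} with $t=a-1$ and $s=r$, the Andrews gap condition $\lambda_{i+1}-\lambda_{i}\geq d\cdot b(\beta_d(\lambda_i))+\nu(\beta_d(\lambda_i))-\beta_d(\lambda_i)$ forces consecutive differences of at least $d$, while parts are at least $2^{a-1}\geq a$ and their least residues modulo $d$ all lie in $\{2^{a-1},\ldots,2^r-2^{a-1}\}$. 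This identifies each partition counted by $\L{a}{n}$ with a partition in $\q{d}{a}{n}$, yielding the first embedding $\varphi_1$.

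Applying the same identification to $\L{a}{n-2^r}$ produces a partition $\lambda$ of $n-2^r$ with parts at least $a$, gaps at least $d$, and each part having least residue $\beta_d(\lambda_i)\in\{2^{a-1},\ldots,2^r-2^{a-1}\}$ modulo $d$. The idea for the second embedding $\varphi_2$ is to augment $\lambda$ by exactly $2^r$ so that the resulting partition, while still satisfying the size and gap constraints of $\q{d}{a}{n}$, has a smallest part whose least residue modulo $d$ lies \emph{outside} $\{2^{a-1},\ldots,2^r-2^{a-1}\}$. Such a residue is always available: $2^r \bmod d$ equals either $2^r$ (if $2^r\leq d$) or $2^r-d$ (otherwise), and the hypothesis $d\neq 2^r-2^{a-1}$ is precisely what keeps this residue out of the forbidden list. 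Concretely, when $\lambda$ is empty or its smallest part exceeds $d+2^r$, one simply prepends a new smallest part of size $2^r$; otherwise, a local surgery on the smallest one or two parts of $\lambda$ produces a new smallest part of the distinguishing residue. Disjointness of the images of $\varphi_1$ and $\varphi_2$ is then immediate from the residue criterion, and injectivity of $\varphi_2$ follows from reversing the surgery.

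The main obstacle I expect is the generic subcase of $\varphi_2$ where $\lambda_1\leq d+2^r$: the surgery must simultaneously preserve all gaps of size at least $d$, introduce the distinguishing residue $2^r \bmod d$ on the new smallest part, and remain reversible. The hypotheses $d\geq 2^{a+2}$ and $n\geq 2d+2^r+2^{a-1}-1$ enter precisely at this point. The size bound on $n$ guarantees that $\lambda$ has enough total weight for a nonempty and nontrivial rearrangement of its two smallest parts, while the lower bound on $d$ guarantees that $r\geq a+2$ and that the full residue set $\{2^{a-1},\ldots,2^r-2^{a-1}\}\cup\{2^r \bmod d\}$ sits strictly inside $\{1,\ldots,d\}$ with enough separation between its elements for the surgery to be well defined. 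Beyond translating Yee's original construction from the scaling factor $1$ to the scaling factor $2^{a-1}$ and carefully tracking the shifted residue classes, no new conceptual ingredient is anticipated; the remaining work is the careful verification of the handful of subcases of the surgery.
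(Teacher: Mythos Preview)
Your overall framework is correct and matches the paper's: identify $\L{a}{m}$ with $E(A_{d,a-1,r}';m)$ via Theorem~\ref{thm:andrewsDandEsets}, embed $Y_d^{(a)}(n)\subseteq X_d^{(a)}(n)$, and then inject $Y_d^{(a)}(n-2^r)$ into $X_d^{(a)}(n)\setminus Y_d^{(a)}(n)$ by adapting Yee's map $\psi$.

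However, your description of $\varphi_2$ significantly underestimates what Yee's construction actually does, and this matters. The surgery is \emph{not} confined to ``the smallest one or two parts,'' and the distinguishing criterion is \emph{not} always the residue of the smallest part. Yee's map splits into eight cases ($Z$, $V_1$, $V_2$, and $V_{3,1},\ldots,V_{3,5}$). In case $Z$ a \emph{middle} part $\lambda_i$ is replaced by $\lambda_i+2^r$; in $V_{3,4}$ the smallest and \emph{largest} parts are both altered; in $V_{3,5}$ three parts at positions $i-1$, $i$, $s$ are changed. Moreover, in cases $V_{3,2}$ and $V_{3,5}$ the image's smallest part $\mu_1=\lambda_1$ retains its original residue in $\{2^{a-1},\ldots,2^r-2^{a-1}\}$; what takes $\Psi(\lambda)$ out of $Y_d^{(a)}(n)$ is that a \emph{different} part (e.g.\ $\mu_2=d+2^r-1$ in $V_{3,2}$) fails to lie in $A_{d,a-1,r}'$. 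So the clean ``smallest-part residue'' criterion you propose does not separate the images in general.

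Finally, your expectation that ``no new conceptual ingredient is anticipated'' beyond rescaling is not quite right. The paper must genuinely \emph{modify} Yee's map in the subcase $V_{3,3}$: Yee's original definition there would give $\mu_1=0$, so instead one sets $\mu_1=2^{a-1}-1$ (and compensates in $\mu_3$). The hypothesis $a\geq 3$ is used exactly here to ensure $2^{a-1}-1\geq a$, and one must then recheck disjointness of $\Psi(V_{3,3}^{(a)})$ from the other images. This is the one place where passing from $a=1$ to general $a$ requires more than a translation.
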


\begin{proof}
By \eqref{fkg_defs}, we see that $\L{a}{n}$ counts the number of partitions into distinct parts coming from the set $A_{d,a-1,r}$ as defined in \eqref{A'def}, so by Theorem \ref{thm:andrewsDandEsets} it follows that
\begin{equation}\label{eqnL_E}
\L{a}{n} = D(A_{d,a-1,r};n)=E(A_{d,a-1,r}';n).
\end{equation}
First we observe that by \eqref{eqnL_E} it is equivalent to show that 
\[
\q{d}{a}{n} \geq E(A_{d,a-1,r}';n) + E(A_{d,a-1,r}';n - 2^r).
\]
Recall from the statement of Theorem \ref{thm:andrewsDandEsets} that $E(A_{d,a-1,r}';m)$ counts the number of partitions of \(n\) into parts taken from $A_{d,a-1,r}'$ of the form \(n = \la_1 + \la_2 + \cdots + \la_s\) such that 
\begin{equation}\label{diffcond}
\la_{i + 1} - \la_{i} \geq d \cdot b(\beta_d(\la_i)) + \nu(\beta_d(\la_i))- \beta_d(\la_i). 
\end{equation}
Define $Y_d^{(a)}(n)$ to be the set of partitions counted by $E(A_{d,a-1,r}';n)$. Additionally, let $X_d^{(a)}(n)$ denote the set of partitions counted by $q_d^{(a)}(n)$, i.e., the partitions of $n$ with parts $\geq a$ having difference at least $d$. By \eqref{A'def}, partitions in $Y_d^{(a)}(n)$ have parts $\leq a$.  Moreover, \eqref{diffcond} shows that the difference between parts must be at least $d$.  This can be seen from the facts that $\nu(\beta_d(\la_i))= \beta_d(\la_i)$ when $\la_i$ is a power of $2$, and $\beta_d(\la_i)\leq 2^{r} - 2^{a-1} < d$, when $\la_i$ is not a power of $2$.  Thus we have $Y_d^{(a)}(n)\subseteq X_d^{(a)}(n)$, and so to prove the lemma it suffices to show that 
\begin{equation}\label{setgoal}
|Y_d^{(a)}(n-2^r)| \leq |X_d^{(a)}(n)\backslash Y_d^{(a)}(n)|. 
\end{equation}

In a 2008 paper, Yee \cite{yee_alders_2008} constructed an injective map
\begin{equation}\label{eq:psi}
\psi: Y_d^{(1)}(n-2^r)\rightarrow X_d^{(1)}(n)\backslash Y_d^{(1)}(n),
\end{equation}
which proves \eqref{setgoal} for $a=1$.  We note that although Yee \cite{yee_alders_2008} assumes $n \geq 4d+2^r$ in the proof, it is only necessary that $n \geq 2d+2^r$.  By the definitions of $X_d^{(a)}(n)$ and $Y_d^{(a)}(n)$, it is easy to see that $Y_d^{(a)}(n) \subseteq Y_d^{(1)}(n)$ and $X_d^{(a)}(n) \subseteq X_d^{(1)}(n)$.  Thus to finish the proof it would suffice to show that the restriction of Yee's map to $Y_d^{(a)}(n-2^r)$,
\[
\Psi: Y_d^{(a)}(n-2^r)\rightarrow X_d^{(1)}(n)\backslash Y_d^{(1)}(n) \subseteq X_d^{(1)}(n)\backslash Y_d^{(a)}(n),
\]
has image contained in $X_d^{(a)}(n)$.  Since we already know the image is in $X_d^{(1)}(n)$, we would only need to show that parts of partitions in the image are $\geq a$.

Yee's map $\psi$ in \eqref{eq:psi} is defined piecewise on four sets (in the case $a=1$), defined by 
\begin{align*}
Z^{(a)} &:= \{\lambda \in Y_d^{(a)}(n-2^r) \mid \lambda_{i+1}-\lambda_i \geq 2d, \mbox{ and } \beta_d(\lambda_i) + 2^r \leq d  \mbox{ for some } 1\leq i \leq s\}  \\
V_1^{(a)} & := \{ \lambda \in Y_d^{(a)}(n-2^r) \backslash Z^{(a)} \mid \lambda_1 \geq 2d + 2^{a-1} - 1\}, \\
V_2^{(a)} & := \{ \lambda \in Y_d^{(a)}(n-2^r) \backslash Z^{(a)} \mid d \leq \lambda_1 < 2d + 2^{a-1} - 1\}, \\
V_3^{(a)} & := \{ \lambda \in Y_d^{(a)}(n-2^r) \backslash Z^{(a)} \mid \lambda_1 < d\}, \\
\end{align*}
where $\lambda = \lambda_1 + \cdots + \lambda_s$ with parts listed in increasing order.  Observe that by definition of $Z^{(a)}$, we have $Y_d^{(a)}(n-2^r) \cap Z^{(1)} = Z^{(a)}$, so it follows that $Y_d^{(a)}(n-2^r) \backslash Z^{(a)} \subseteq Y_d^{(1)}(n-2^r) \backslash Z^{(1)}$, so the sets above are all subsets of their $a=1$ versions. Moreover the set $V_3^{(a)}$ is further partitioned into the five subsets
\begin{align*}
V_{3,1}^{(a)} & := \{ \lambda \in V_3^{(a)} \mid \lambda_2 - \beta_d(\lambda_2) \geq 6d \}, \\
V_{3,2}^{(a)} & := \{ \lambda \in V_3^{(a)} \mid \lambda_2 - \beta_d(\lambda_2) =4d \mbox{ or }5d \}, \\
V_{3,3}^{(a)} & := \{ \lambda \in V_3^{(a)} \mid \lambda_2 - \beta_d(\lambda_2) = 3d \}, \\
V_{3,4}^{(a)} & := \{ \lambda \in V_3^{(a)} \mid \lambda_2 - \beta_d(\lambda_2) = 2d \}, \\
V_{3,5}^{(a)} & := \{ \lambda \in V_3^{(a)} \mid \lambda_2 - \beta_d(\lambda_2) = d \}. \\
\end{align*}
where we note that by our hypothesis on the size of $n$, the number of parts is $s\geq 2$ for any $\lambda \in V_2^{(a)}\cup V_3^{(a)}$.  We are able to simply restrict $\psi$ to $\Psi$ in each case except $V_{3,3}^{(a)}$, where we need to make a slight modification to the map.  Thus for the remainder of the proof, we state Yee's definition of $\psi$ from \cite{yee_alders_2008} for each of the above subsets except $V_{3,3}^{(a)}$ and show that restricting to $Y_d^{(a)}(n-2^r)$ yields partitions into parts $\geq a$.  We then give our modification for the case $V_{3,3}^{(a)}$, and show not only that partitions in the image of $V_{3,3}^{(a)}$ have parts $\geq a$, but also that the difference between parts is $\geq d$, the map is an injection, and that the image of $V_{3,3}^{(a)}$ is disjoint with $Y_d^{(a)}(n)$ and the image of all of the other sets in the partition of $Y_d^{(a)}(n-2^r)$.

For $\lambda \in Y_d^{(a)}(n-2^r)$, write $\lambda = \lambda_1 + \cdots + \lambda_s$ with parts listed in increasing order.  Then Yee defines $\psi(\lambda) = \mu_1 + \cdots + \mu_t$, where again the parts are listed in increasing order.  Thus to show parts in the image are $\geq a$ it suffices to simply show $\mu_1\geq a$.

For our first case, let $\lambda \in Z^{(a)}$, and let $i$ be the least positive integer such that $\lambda_{i+1}-\lambda_i \geq 2d$, and $\beta_d(\lambda_i) + 2^r \leq d$.  We define $\Psi(\lambda)$ to have parts $\mu_j$, where 
\[
\mu_j = 
\begin{cases}
\la_j + 2^r, &\text{if } j = i, \\
\la_j, &\text{if } j \neq i.
\end{cases}
\]
By \eqref{A'def}, each $\lambda_j\geq 2^{a-1} \geq a$, so $\mu_1\geq a$ as desired.

Next, let  $\lambda \in V_1^{(a)}$, and define $\Psi(\lambda)$ to have parts $\mu_j$, where
\[
\mu_j = 
\begin{cases}
2^r, &\text{if } j = 1,\\
\la_{j - 1}, &\text{if } j \geq 2.
\end{cases}
\]
Here $\mu_1 = 2^r \geq 2^{a-1}\geq a$, as desired.

Next, let  $\lambda \in V_2^{(a)}$.  Define $i$ to be the least positive integer such that 
\[
(\lambda_{i+1}-\beta_d(\lambda_{i+1})) - (\lambda_i-\beta_d(\lambda_i)) \geq 2d,
\]
if such an integer exists, and set $i=s$ if not.  Define $\Psi(\lambda)$ by
\[
\mu_j = 
\begin{cases}
\beta_d(\lambda_1), &\text{if } j = 1,\\
\lambda_{j-1}-\beta_d(\lambda_{j-1}) + \beta_d(\lambda_j), &\text{if } 2 \leq j \leq i,\\
\lambda_i-\beta_d(\lambda_i) + 2^r, &\text{if } j = i + 1,\\
\la_{j - 1}, &\text{if } j \geq i + 2.
\end{cases}
\] 
By \eqref{A'def}, $\mu_1 = \beta_d(\lambda_1) \geq 2^{a-1} \geq a$ as desired.

Next, let $\lambda \in V_{3,1}^{(a)}$.  Define $\Psi(\lambda)$ by 
\[
\mu_j = 
\begin{cases}
2^r - 1, &\text{if } j = 1,\\
2d + \beta_d(\lambda_1), &\text{if } j = 2,\\
\lambda_2 - 2d + 1, &\text{if } j = 3,\\
\la_{j - 1}, &\text{if } j \geq 4.
\end{cases}
\]  
Since we are assuming $d\geq 2^{a+2}$, \eqref{eq:r def} yields that $r\geq a+2$, so $\mu_1 = 2^r-1 \geq a$ as desired.

Next, let $\lambda \in V_{3,2}^{(a)}$.  Define $\Psi(\lambda)$ by    
\[
\mu_j = 
\begin{cases}
\lambda_1, &\text{if } j = 1,\\
d + 2^r - 1, &\text{if } j = 2,\\
\lambda_2- d + 1, &\text{if } j = 3,\\
\la_{j - 1}, &\text{if } j \geq 4.
\end{cases}
\]  
By \eqref{A'def}, $\mu_1 = \lambda_1 \geq 2^{a-1} \geq a$ as desired.

Next, let $\lambda \in V_{3,4}^{(a)}$.  Define $\Psi(\lambda)$ by
\[
\mu_j = 
\begin{cases}
2^r - \lambda_1 - 1, &\text{if } j = 1,\\
\la_j, &\text{if } 2 \leq j \leq s-1,\\
\la_s + 2\lambda_1 + 1, &\text{if } j = s.
\end{cases}
\]  
By definition of $V_{3,4}^{(a)}$, we have $\lambda_2 - \beta_d(\lambda_2) = 2d$. Thus \eqref{diffcond} implies that $b(\lambda_1) \leq 2$.  Thus $b(2^r - \lambda_1 - 1) \geq r-2$.  However, again since we are assuming $d\geq 2^{a+2}$ we have $r\geq a+2$, so $\mu_1 = 2^r - \lambda_1 - 1 \geq b(2^r - \lambda_1 - 1) \geq a$ as desired. 
 
Next, let $\lambda \in V_{3,5}^{(a)}$.  Define $i$ to be the least positive integer such that 
\[
(\lambda_{i+1}-\beta_d(\lambda_{i+1})) - (\lambda_i-\beta_d(\lambda_i)) \geq 2d,
\]
if such an integer exists, and set $i=s$ if not. Set 
\[
x_\lambda = 
\begin{cases}
5, & \text{if } \beta_d(\lambda_{i - 1}) \not\in \{ 1,4\},\\
10, &\text{if } \beta_d(\lambda_{i - 1}) \in \{ 1,4\},
\end{cases}
\]
and define $\Psi(\lambda)$ by 
\[
\mu_j = 
\begin{cases}
\la_j, &\text{if } j \leq i - 2,\\
\la_j + x_\lambda, &\text{if } j = i - 1 \text{ or } i,\\
\la_{j}, &\text{if } i+1 \leq j \leq s-1,\\
\la_{s} + 2^r -2x_\lambda, &\text{if } j = s.
\end{cases}
\] 
By \eqref{A'def}, $\mu_1 \geq \lambda_1 \geq 2^{a-1} \geq a$, as desired.

It remains to consider the case when $\lambda \in V_{3,3}^{(a)}$ where we make a minor change to Yee's definition.  In this case we define $\Psi(\lambda)$ by 
\[
\mu_j = 
\begin{cases}
2^{a - 1} - 1, &\text{if } j = 1,\\
d + \lambda_1 + 1, &\text{if } j = 2,\\
\lambda_2- d + 2^r - 2^{a-1}, &\text{if } j = 3,\\
\la_{j - 1}, &\text{if } j \geq 4.
\end{cases}
\]  
By this construction, $\Psi$ is injective on $V_{3,3}^{(a)}$.  To see that the parts differ by at least $d$, we first observe that 
$\lambda_1 = \beta_d(\lambda_1) \geq 2^{a-1}$ by \eqref{A'def}.  Thus $\mu_2-\mu_1 \geq d$.  By definition of $V_{3,3}^{(a)}$, we have $\lambda_2 - \beta_d(\lambda_2) = 3d$.  Thus 
\[
\mu_3-\mu_2 \geq d \iff ((2^r-2^{a-1}) - \lambda_1) + (\beta_d(\lambda_2) - 1) \geq 0,
\]
which follows from the fact that $\lambda_1 = \beta_d(\lambda_1)$ and \eqref{A'def}.  Since $\lambda_3 - \lambda_2 \geq d$, \eqref{eq:r def} gives that $\mu_4-\mu_3 \geq d$.  Since $\lambda_{j+1} - \lambda_j \geq d$ for each $j$ we have directly that $\mu_j-\mu_{j-1}\geq d$ for $j\geq 5$.  By hypothesis we have $a\geq 3$, so it follows that $\mu_1 = 2^{a-1}-1 \geq a$.  It remains to show that  $\Psi(V_{3,3}^{(a)})$ does not intersect with the images of any of the other sets in the partition of $Y_d^{(a)}(n-2^r)$.     
    
Since $\mu_1 = 2^{a-1}-1$,  $\Psi(\lambda) \not \in Y_d^{(a)}(n)$, by definition of $A^{(a)'}_d$ in \eqref{A'def}.  Moreover, since 
\[
\mu_1 = 2^{a-1}-1< \beta_d(\lambda_1) < 2^r,
\]
it follows that $\Psi(\lambda) \not \in \Psi(Z^{(a)})$, $\Psi(\lambda) \not \in \Psi(V_1^{(a)})$, $\Psi(\lambda) \not \in \Psi(V_2^{(a)})$, and $\Psi(\lambda) \not \in \Psi(V_{(3,c)}^{(a)})$ for $c=1,2,5$.  To see that $\Psi(\lambda) \not \in \Psi(V_{(3,4)}^{(a)})$, we observe that the only way $\mu_1 = 2^{a-1}-1 = 2^r - \lambda_1' - 1$ is if $\lambda_1' = 2^r-2^{a-1}$ where $\lambda_1'$ is the smallest part of a partition in $V_{(3,4)}^{(a)}$.  But we saw in that case that $b(\lambda_1') \leq 2$, so either $\lambda_1' = 2^k$ or $\lambda_1' = 2^k + 2^\ell$.  As we have observed, $d\geq 2^{a+2}$ implies that $r\geq a+2$, and thus, $2^{a-1} < \lambda_1 <2^r$.  So $a-1< k < \ell < r$ (omitting $\ell$ if $b(\lambda_1')=1$), and since distinct powers of $2$ cannot sum to a power of $2$, we conclude $\lambda_1' \neq 2^r-2^{a-1}$ which finishes the proof.     
\end{proof}

We now turn our attention to the third inequality.
\begin{lemma}\label{Lemma2.2analog}
For any $a\geq 1$, and $d \geq 2^{a+1}$, where $d\not= 2^r - 2^{a-1}$ with $r$ defined by \eqref{eq:r def}, we have for $n \geq 1$,
\[
\Kda \geq \Gda.
\]
\end{lemma}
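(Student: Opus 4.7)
The plan is to adapt and generalize Yee's argument for the $a=1$ case of this lemma \cite{yee_alders_2008}, proceeding in two main steps: an algebraic reduction of the ratio $\kda/\gda$ to a compact closed form, followed by a coefficient-wise positivity argument.

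For the first step, I would use $\pochneg{d+2^i}{d} = \pochneg{2^i}{d}/(1+q^{2^i})$ together with the iterated difference-of-squares telescoping $(1-q^{2^{a-1}})\prod_{i=a-1}^{r-1}(1+q^{2^i}) = 1-q^{2^r}$ to rewrite
\[
\kda = \frac{1-q^{d+2^{a-1}}}{1-q^{2^r}} \prod_{i=a-1}^{r-1}\pochneg{2^i}{d}.
\]
Then, applying $\pochneg{x}{d} = (q^{2x};q^{2d})_\infty/((q^x;q^{2d})_\infty(q^{d+x};q^{2d})_\infty)$ to each factor and telescoping the numerator across $i=a-1,\dots,r-1$, together with the parallel simplification of $\gda$ via $\pochneg{x}{2d}=(q^{2x};q^{4d})_\infty/(q^x;q^{2d})_\infty$ and the split $(q^x;q^{2d})_\infty = (q^x;q^{4d})_\infty(q^{x+2d};q^{4d})_\infty$, I expect the ratio to reduce after cancellation to the clean identity
\[
\frac{\kda}{\gda} = \frac{(q^{4d+2^r};q^{4d})_\infty}{(q^{3d+2^{a-1}};q^{2d})_\infty} =: \mathcal{H}(q).
\]

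Since $\gda$ has nonnegative coefficients, the inequality $\kda \geq \gda$ reduces to showing that $\gda\cdot(\mathcal{H}(q)-1)$ has nonnegative coefficients. The series $\mathcal{H}(q)-1$ has first nonzero coefficient $+1$ at $q^{3d+2^{a-1}}$ (from the denominator expansion) and first negative coefficient $-1$ at $q^{4d+2^r}$ (from the leading $(1-q^{4d+2^r})$ factor of the numerator); the hypothesis $d\neq 2^r-2^{a-1}$ is precisely what ensures these two degrees are distinct so that the $-1$ is not partially cancelled.  Following Yee, I would analyze the convolution degree by degree: at each position where $\mathcal{H}(q)-1$ contributes negatively, explicit partitions counted by $\gda$ at a complementary lower degree produce positive convolution contributions that dominate.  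For instance, the $-1$ at $q^{4d+2^r}$ is compensated by $\Gda|_{n=d+2^r-2^{a-1}}\cdot [q^{3d+2^{a-1}}]\mathcal{H}(q)$; the hypothesis $d\geq 2^{a+1}$ forces $r\geq a+1$, so the partition $(d+2^{r-1})+(2^{a-1})^{2^{r-a}-1}$ of $d+2^r-2^{a-1}$ lies in the set enumerated by $\gda$, giving a positive contribution of at least $1$.

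The main obstacle is extending this local compensation argument uniformly to all degrees: $\mathcal{H}(q)$ generates infinitely many negative contributions from the successive factors $(1-q^{(4k+4)d+2^r})$ and their products, and organizing these via inclusion–exclusion while matching each with an appropriate family of positive contributions from $\gda$ is where the technical core of the argument lies.  The hypothesis $d\geq 2^{a+1}$ (equivalently $r\geq a+1$) is used repeatedly to guarantee $\gda$ is rich enough to supply the needed compensations at every degree, and the hypothesis $d\neq 2^r-2^{a-1}$ ensures the degree separation used above is never degenerate.
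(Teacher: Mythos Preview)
Your algebraic reduction is correct and in fact matches the paper's own first step exactly: the paper rewrites $\kda$ as
\[
\frac{(q^{4d+2^r};q^{4d})_\infty\,(-q^{d+2^{r-1}};q^{2d})_\infty}{(q^{2^{a-1}};q^{2d})_\infty\,(q^{3d+2^{a-1}};q^{2d})_\infty\,(q^{d+2^{a}};q^{2d})_\infty\cdots(q^{d+2^{r-2}};q^{2d})_\infty},
\]
which upon dividing by $\gda$ gives precisely your $\mathcal{H}(q)$. So the first half of your plan is solid.

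The gap is in the second half. What you describe as ``analyze the convolution degree by degree'' and then concede is ``the main obstacle'' is in fact the entire content of the lemma, and you have not supplied an argument. Your single worked example (compensating the $-1$ at $q^{4d+2^r}$) is correct, but a degree-by-degree compensation scheme of the kind you sketch is dangerous: the same positive contribution from $\gda$ can be inadvertently used to cancel several different negative terms, and nothing in your outline prevents this double-counting. The paper (generalizing Yee) does \emph{not} proceed by convolution bookkeeping. Instead it interprets $\Kda = |S^+(n)| - |S^-(n)|$ as a signed count over partitions whose parts $\equiv 2^r\pmod{4d}$ are distinct (sign $= (-1)^{\#\text{such parts}}$), observes $\Gda = |T(n)|$ with $T(n)\subseteq S^+(n)$, and then constructs an explicit sign-reversing involution $\varphi$ on $S(n)=S^+(n)\cup S^-(n)$ whose fixed-point set contains $T(n)$ and is contained in $S^+(n)$. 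The involution is built iteratively as a family $\varphi_{a-1},\dots,\varphi_{r-2}$, each trading a single part $\equiv 2^r\pmod{4d}$ for a specified bundle of unrestricted parts (or vice versa). This structure is what guarantees every negative contribution is matched \emph{injectively} to a positive one, which your convolution sketch does not. To complete your proof you would need to either construct such an involution explicitly or replace it with an equally rigorous injective matching; the hypothesis $d\neq 2^r-2^{a-1}$ is used in the paper not merely for the degree separation you note, but to ensure the part-substitutions in $\varphi_t$ genuinely change the residue class modulo $4d$ and hence flip the sign.
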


\begin{proof}
This proof is a generalization of, and follows the method of Yee \cite[Lemma 2.2]{yee_alders_2008}.  First, by our hypothesis that $d\geq 2^{a+1}$, we have by \eqref{eq:r def} that $r\geq a+1$.  Using \eqref{fkg_defs} we rewrite $\k{a}$ as
\begin{align}
\k{a} &= \frac{1-q^{d+2^{a-1}}}{1-q^{2^{a-1}}} \cdot \frac{\poch{2d+2^{a}}{2d}}{\poch{d+2^{a-1}}{d}} \cdot \frac{\poch{2d+2^{a+1}}{2d}}{\poch{d+2^{a}}{d}} \cdot \cdots \cdot \frac{\poch{2d+2^r}{2d}}{\poch{d+2^{r-1}}{d}}  \nonumber \\
&= \frac{\poch{4d+2^r}{4d} \pochneg{d+2^{r-1}}{2d}}{\poch{2^{a-1}}{2d} \poch{3d+2^{a-1}}{2d} \poch{d+2^{a}}{2d} \cdots \poch{d+2^{r-2}}{2d}}. \label{eq:k}
\end{align}
We observe that the presence of the term $(q^{4d+2^r};q^{4d})_\infty$ in the numerator of \eqref{eq:k} yields a partition theoretic interpretation of $\K{a}{n}$ as a difference of two types of partitions.  We define $S^+(n)$ to be the set containing partitions of $n$ into unrestricted parts $\equiv 2^{a-1}, d+2^{a-1}, d+2^{a}, ..., d+2^{r-2} \pmod{2d}$ excluding the part $d+2^{a-1}$,  distinct parts $\equiv d + 2^{r-1} \pmod{2d}$, and an \emph{even} number of distinct parts $\equiv 2^r \pmod{4d}$ that are at least $4d+2^r$.  Similarly, define $S^-(n)$ to be the set containing partitions of $n$ into unrestricted parts $\equiv 2^{a-1}, d+2^{a-1}, d+2^{a}, ..., d+2^{r-2} \pmod{2d}$ excluding the part $d+2^{a-1}$,  distinct parts $\equiv d + 2^{r-1} \pmod{2d}$, and an \emph{odd} number of distinct parts $\equiv 2^r \pmod{4d}$ that are at least $4d+2^r$.  Then we have that 
\[
\Kda = |S^+(n)|-|S^-(n)|.
\] 
Write $S(n) = S^+(n) \cup S^-(n)$, and define the sign of a partition $\pi \in S(n)$ by
\[
\textrm{sgn}(\pi) =
\begin{cases}
1 & \text{if } \pi \in S^+(n) \\
-1 & \text{if } \pi \in S^-(n). \\
\end{cases}
\] 
Additionally, let $T(n)$ denote the set of partitions of $n$ into unrestricted parts $\equiv 2^{a-1}, d+2^{a}, ..., d+2^{r-2} \pmod{2d}$ and distinct parts $\equiv d + 2^{r-1} \pmod{2d}$ so that
\[
\mathcal{G}_d^{(a)}(n) = |T(n)|.
\]
Thus it suffices to prove that $|T(n)| \leq |S^+(n)| - |S^-(n)|$, which since $T(n) \subseteq S^+(n)$ is disjoint from $S^-(n)$ is equivalent to showing
\[
|T(n)\cup S^-(n)| \leq |S^+(n)|.
\]
We prove this by constructing a bijective function $\varphi: S(n) \rightarrow S(n)$ such that 
\begin{equation}\label{eq:goal}
\varphi(T(n)\cup S^-(n))\subseteq S^+(n).
\end{equation}

The function $\varphi$ is constructed by an iterative process which we outline below.  Set $S_{a-2}:=\emptyset$.  For each $a-1\leq t \leq r-2$, we will define 
\[
\varphi_t : S(n)\backslash \bigcup_{j=a-1}^{t-1} S_j(n) \rightarrow S(n)\backslash \bigcup_{j=a-1}^{t-1} S_j(n),
\]
and correspondingly set
\begin{equation}\label{S_tdef}
S_t(n) := \{\pi \mid \varphi_t(\pi) \neq \pi \}.
\end{equation}
By definition, the sets $S_t(n)$, for $a-1\leq t \leq r-2$ are disjoint and together with $\bigcup_{t=a-1}^{r-2} S_t(n)$, partition $S(n)$.  We define $\varphi: S(n) \rightarrow S(n)$ by 
\[
\varphi(\pi) = 
\begin{cases}
\varphi_t(\pi) & \text{if } \pi \in S_t(n), \text{ for } a-1\leq t \leq r-2, \\
\pi & \text{otherwise}.\\
\end{cases}
\]
Since $T(n) \subseteq S^+(n)$, the proof of \eqref{eq:goal} can then be completed by showing that 
\begin{enumerate}
\item
$\varphi_t$ is a bijection for each $a-1\leq t \leq r-2$, 
\item 
If $\pi \in S_t(n)$, for $a-1\leq t \leq r-2$, then $\rm{sgn}(\varphi_t(\pi))=-\rm{sgn}(\pi)$, 
\item
$S^-(n) \subseteq \bigcup_{t=a-1}^{r-2} S_t(n)$, and
\item
$T(n) \subseteq S(n) \backslash \bigcup_{t=a-1}^{r-2} S_t(n)$.
\end{enumerate}

In order to define $\varphi_t$, we use the following notation used by Yee \cite{yee_alders_2008}.  Write $j \in \pi$ if $j$ is a part of $\pi$ and write $j^{x}$ to indicate $x$ occurrences of $j$ as a part.  Also, when the partition $\pi$ is set, let $m_j$ denote the number of times $j$ occurs as a part. Finally, let $\alpha := d + 2^{r} - 2^{a -1}$ where $r$ is defined by \eqref{eq:r def}.  
\begin{remark}\label{rem:parts}
In what follows we define $\varphi_t$ for the cases $t=a-1$, and $a \leq t \leq r-2$ separately.  In all cases, the only parts that are altered are either of the form $2^{r-j}kd+2^r$, where $a-1\leq j \leq r-2$ and $k\in \Z$, $2^{a-1}$, or $kd+2^t$, where $a-1\leq j \leq r-2$ and $k\in \Z$.  Clearly $2^{r-j}kd+2^r \equiv 2^r \pmod{4d}$.  Also, $2^{a-1} \not  \equiv 2^r \pmod{4d}$, since $2^{a-1} < 2^r < 4d$.  By \eqref{eq:r def} and our hypothesis that $d\not= 2^r - 2^{a-1}$ we have $2^r-2^j <d$, so it follows that $2^{r-j}kd+2^r \not \equiv 2^r \pmod{4d}$ as well.  
\end{remark}

We first define $\varphi_{a-1}$.  For a fixed partition $\pi \in S(n)$, let
\begin{align*} 
x  &:= \text{smallest integer } i \text{ such that } 2^{r-(a-1)}id+2^r \in \pi \\ 
y  &:= \text{smallest integer } j \text{ such that } (2^{r-(a-1)}j-1)d+2^{a-1} \in \pi \\
z  &:= \text{smallest } l > y \text{ such that } m_{ld + 2^{a - 1}} \geq 2^{r-(a-1)},
\end{align*} 
where if there is no such $x$, $y$, $z$, we set $x = \infty$, $y = \infty$, or $z = \infty$, respectively.  We then define $\varphi_{a-1}(\pi)$ by making the following substitutions among parts of $\pi$,
\[
\begin{cases}
2^{r-(a-1)}xd+2^r \to (2^{r-(a-1)}x - 1)d + 2^{a-1}, (2^{a-1})^{\frac{\alpha}{2^{a-1}}} & \text{if } x \leq y, x < \infty, \\
(2^{r-(a-1)}y - 1)d +2^{a-1},( 2^{a-1})^{\frac{\alpha}{2^{a-1}}} \to 2^{r-(a-1)}yd+2^r & \text{if }x > y, m_{2^{a-1}} \geq \frac{\alpha}{2^{a-1}},  \\
2^{r-(a-1)}xd + 2^{r} \to (xd + 2^{a-1})^{2^{r-(a-1)}}, & \text{if }x > y, x < \infty, m_{2^{a-1}} < \frac{\alpha}{2^{a-1}}, x \leq z,   \\
(zd + 2^{a-1})^{2^{r-(a-1)}} \to 2^{r-(a-1)}zd +2^{r}, &  \text{if }x > y, m_{2^{a-1}} < \frac{\alpha}{2^{a-1}}, x > z, 
\end{cases}
\]
and setting $\varphi_{a-1}(\pi) = \pi$ otherwise.  Under each condition, the number $n$ being partitioned doesn't change.  Since the conditions defining $\varphi_{a-1}$ are pairwise disjoint, and ensure the required parts exist, $\varphi_{a-1}$ is well-defined.  Moreover as the parts $\equiv 2^r \pmod{4d}$ in the definition of $S(n)$ are distinct, we can see that $\varphi_{a-1}^{2}(\pi) = \pi$, and so $\varphi_{a-1}$ is clearly a bijection.  By our discussion in Remark \ref{rem:parts} we see that in each case of the definition, a part $\equiv 2^r \pmod{4d}$ is either removed or added. Thus by \eqref{S_tdef}, if $\pi \in S_{a-1}(n)$, we have 
\[
\textrm{sgn}(\varphi_{a-1}(\pi)) = -\textrm{sgn}(\pi).
\]
Notice that if $x<\infty$, then $\pi$ must satisfy one of the four conditions in the definition of $\varphi_{a-1}$.  Thus for any $\pi \not \in S_{a-1}(n)$, $x=\infty$, i.e., $\pi$ has no part $\equiv 2^{r} \pmod{2^{r-(a-1)}d}$, however may have a part of the form $2^{r-a}kd + 2^r$ with $k$ odd.  We also observe that if $\pi \not \in S_{a-1}(n)$ and $y<\infty$, then $m_{2^{a-1}} < \frac{\alpha}{2^{a-1}}$ and $z = \infty$.  

We now iteratively define $\varphi_t$ for $a \leq t \leq r-2$.  For a fixed partition $\pi \in S(n)\backslash  \bigcup_{j=a-1}^{t-1} S_j(n)$, let
\begin{align*} 
u & := \text{smallest integer } p  \text{ such that } (2^{r-t+1}p-1)d+2^{a-1} \in \pi, \\
x & := \text{smallest odd integer } i \text{ such that } 2^{r-t}id+2^r \in \pi, \\ 
y & := \text{smallest odd integer } j \text{ such that } (2^{r-t}j-1)d+2^{a-1} \in \pi, \\
w & := \text{smallest odd integer } l \text{ such that } m_{ld+2^t} \geq 2^{r-t}, \\
z & := \text{smallest odd integer } l > y \text{ such that } m_{ld + 2^t} \geq 2^{r-t},
\end{align*} 
where if there is no such $u$, $x$, $y$, $w$, $z$, we set $u = \infty$, $x = \infty$, $y = \infty$, $w = \infty$, or $z = \infty$, respectively.  We then define $\varphi_t(\pi)$ by making the following substitutions among parts of $\pi$,
\[
\begin{cases}
2^{r-a}xd+2^{r} \to (xd+2^{a})^{2^{r-a}} & \text{if }u<\infty, x < \infty, x \leq w, \\
(wd+2^{a})^{2^{r-a}} \to 2^{r-a}wd+2^{r} & \text{if } u<\infty, x > w, w < \infty,\\
2^{r-a}xd + 2^{r} \to (2^{r-a}x-1)d+2^{a - 1}, (2^{a-1})^{\frac{\alpha}{2^{a-1}}} & \text{if }u=\infty, x \leq y, x < \infty, \\
(2^{r-a}y-1)d+2^{a-1},(2^{a-1})^{\frac{\alpha}{2^{a-1}}} \to 2^{r-a}yd+2^{r} & \text{if } u=\infty, x>y, m_{2^{a-1}} \geq \frac{\alpha}{2^{a-1}}, \\
2^{r-a}xd+2^r \to (xd+2^{a})^{2^{r-a}} & \text{if }u=\infty, x>y, x<\infty, m_{2^{a-1}} < \frac{\alpha}{2^{a-1}}, x \leq z, \\
(zd+2^{a})^{2^{r-a}} \to 2^{r-a}zd+2^{r} & \text{if }u =\infty, x>y,m_{2^{a-1}} < \frac{\alpha}{2^{a-1}}, x>z, 
\end{cases}
\]
and setting $\varphi_t(\pi) = \pi$ otherwise.  Under each condition, the number $n$ being partitioned doesn't change.  Since the conditions defining $\varphi_{a}$ are pairwise disjoint, and ensure the required parts exist, $\varphi_{a}$ is well-defined.  As in the previous case, $\varphi_t^{2}(\pi) = \pi$, so we have a bijection, and by our discussion in Remark \ref{rem:parts} as well as \eqref{S_tdef}, we have $\textrm{sgn}(\varphi_t(\pi)) = -\textrm{sgn}(\pi)$ for $\pi \in S_t(n)$.

We again observe that if $x<\infty$, then $\pi$ must satisfy one of the four conditions in the definition of $\varphi_t$.  Thus for any $\pi \not \in S(n)\backslash \bigcup_{j=a-1}^{t-1} S_j(n)$, $x=\infty$, i.e., $\pi$ has no part $\equiv 2^{r} \pmod{2^{r-t}d}$, however may have a part of the form $2^{r-t-1}kd + 2^r$ with $k$ odd.  We also observe that if $\pi \not \in S(n)\backslash \bigcup_{j=a-1}^{t-1} S_j(n)$ and $\pi$ has no part $\equiv  2^{a-1}-d \pmod{2^{r-t}d}$, then $m_{2^{a-1}} < \frac{\alpha}{2^{a-1}}$ and $z = \infty$.
  
We now show that $S^-(n) \subseteq \bigcup_{t=a-1}^{r-2} S_t(n)$.  If $\pi \in S^-(n)$ then $\pi$ must contain a part $\equiv 2^r \pmod{4d}$, since there is an odd number of such parts.  So this part has the form $4dk+2^r$, where $k$ is a nonnegative integer.  If $\pi$ contains such a part with $2^{r-(a-1)}\mid 4k$, then $\pi \in S_{a-1}$.  If not, there exists $a \leq t \leq r-2$ such that $2^{r-t} \parallel 4k$.  Then $\pi \in S_t$.

To finish the proof it now suffices to show that $T(n) \subseteq S \backslash \bigcup_{t=a-1}^{r-2} S_t(n)$.  Let $\pi \in T(n)$.  Then $\pi$ has no parts $\equiv 2^r \pmod{4d}$.  So for each $a-1 \leq t \leq r-2$, in the definition of $\varphi_t$, $x=\infty$.  But in each case, having $\pi \in S_t(n)$ and $x=\infty$ forces $\pi$ to contain a part congruent to either $2^{a-1}$ or $2^a$ modulo an odd multiple of $d$.  Since $\pi$ cannot contain parts of this form, we conclude that $T(n) \not \in S_t(n)$ for any $a-1 \leq t \leq r-2$ as desired.
  
\end{proof}

We are now able to prove Theorem \ref{thm:yeearbitrary}.

\begin{proof}[Proof of Theorem \ref{thm:yeearbitrary}]
By \eqref{eq:r def}, we have that $2^{r-a+1} \leq m+1 < 2^{r-a+2}$.  Thus for each $0\leq n \leq m$, $n$ can be written uniquely in binary as $\sum_{i=0}^{r-a+1}\varepsilon_i  2^i$, where $\varepsilon_i \in \{0,1\}$.  It follows that each term $x^n$ appears in the expansion of $\prod_{i=0}^{r-a+1} (1+x^{2^i})$.  Setting $x=q^{2^{a-1}}$ yields that 
\begin{equation}\label{sub_nonneg}
\prod_{i=0}^{r-a+1} (1+q^{2^{a-1+i}}) - (1 + q^{2^{a-1}} + q^{2\cdot2^{a-1}} + q^{3\cdot2^{a-1}} + \cdots + q^{d})
\end{equation}
has nonnegative $q$-coefficients.  From \eqref{fkg_defs} we thus observe that 
\begin{multline}\label{fk_nonneg}
(1+q^{2^r})\f{a} - \k{a} = 
\left( \prod_{i=0}^{r-a+1} (1+q^{2^{a-1+i}}) - (1 + q^{2^{a-1}} + q^{2\cdot2^{a-1}} + q^{3\cdot2^{a-1}} + \cdots + q^{d}) \right) \\
\cdot (-q^{d+2^{a-1}} ; q^d)_\infty (-q^{d+2^{a}} ; q^d)_\infty \cdots (-q^{d+2^{r-1}} ; q^d)_\infty
\end{multline}
has nonnegative coefficients.  Since 
\[
(1+q^{2^r})\f{a} = \sum_{n\geq 0} \left(\L{a}{n} + \L{a}{n-2^r}  \right) q^n,
\]
where $\L{a}{n}:=0$ for any $n<0$, \eqref{fk_nonneg} shows that when $a,m,n\geq 1$, $d=2^{a-1}m$, and $r$ is defined by \eqref{eq:r def},
\begin{equation}\label{ineq2}
\L{a}{n} + \L{a}{n-2^r}  \geq \Kda.
\end{equation}
Applying Lemmas \ref{yee2.7analog} and  \ref{Lemma2.2analog} to \eqref{ineq2}, we have that when $a\geq 3$, $d=2^{a-1}m$ with $m\geq 8$ and $m \neq 2^{r-a+1} - 1$, where $r$ is defined by \eqref{eq:r def}, then for $n\geq 2^am+2^r+2^{a-1}-1$, 
\[
\q{d}{a}{n} \geq \Gda.
\]
To finish the proof it thus suffices to show that for $a\geq 3$, and $d=2^{a-1}m$ where $m\geq 31$,
\begin{equation}\label{ineq4}
\mathcal{G}_d^{(a)}(2^{a-1}n) \geq \Qdashdash{d}{a}{2^{a-1}n}.
\end{equation}

Setting
\[
S_{a,d} := \{x \in \N \mid x \equiv \pm a \!\!\! \pmod{d+3}\} \backslash \{ a, d+3-a \},
\]
we have $\rho(S_{a,d}; n) =  \Qdadashdash$.  Moreover, since $(-q^{d+2^{r-1}}; q^{2d})_\infty$ has nonnegative $q$-coefficients, it follows that $\Gda \geq \rho(T_{a-1,r-1,d}; n)$, where
\[
T_{a-1,r-1,d}:=\{ y \in \N \mid y \equiv 2^{a-1}, d+2^{a}, ..., d+2^{r-2} \!\!\! \pmod{2d} \}.
\]
Thus, \eqref{ineq4} will follow from showing that $\rho(T_{a-1,r-1,d}; 2^{a-1}n) \geq \rho(S_{a,d}; 2^{a-1}n)$, which is an immediate consequence of Corollary \ref{cor:STgeneral}. 
\end{proof}

\section{Asymptotic results}\label{sec:asymptotics section}

In this section we first prove Theorem \ref{thm:asymptotic result} which is analogous to a result of Andrews \cite[Theorem 2]{andrews_partition_1971}.  We then provide asymptotic bounds for the functions $\qda$ and $\Qda$, and discuss potential methods to computationally prove the finitely many remaining cases of Conjecture \ref{conj:KPconj}. 

\begin{proof}[Proof of Theorem \ref{thm:asymptotic result}]
First, we observe that by equation \eqref{rem:thing}, it suffices to consider $\Delta_d^{(a)}(n)$ instead of $\Delta_d^{(a, -, -)}(n)$.  By work of Meinardus \cite[Theorems \(2\) and \(3\)]{meinardus_uber_1954}, it follows that
\[
\qda \sim C(d ,a) n^{-\frac{3}{4}} e^{2 \sqrt{A n}},
\]
where 
\begin{align}
C(d ,a) &:= \frac{1}{2\sqrt{\pi}}A^{\frac{1}{4}}\left(\alpha^{d + 1 - 2a} \cdot (d(\alpha)^{d - 1} + 1)\right)^{-\frac{1}{2}}, \nonumber \\
A &:= \frac{d }{2} \log^2\alpha + \sum_{r = 1}^{\infty} \frac{(\alpha)^{rd}}{r^2}, \label{eq:Adef}
\end{align}
and $\alpha \in [0,1]$ is the positive real number such that $\alpha^d + \alpha - 1 = 0$.  We thus observe that
\begin{equation}\label{eq:Andrews_q}
\log{\qda} \sim \log{\left(C(d ,a) n^{-\frac{3}{4}}\right)} + 2 \sqrt{A n} \sim 2 \sqrt{A n},
\end{equation}
where by work of Andrews \cite[proof of Theorem 2]{andrews_partition_1971} 
\[
A > \frac{\pi^2}{3d + 9}.
\]
      
Furthermore, from Andrews \cite[Example 1, pg. 97]{andrews_theory_1976} we have
\begin{equation}\label{eq:Andrews_Q}
\Qda \sim \frac{\csc(\frac{\pi a}{d + 3})}{(4\pi) 3^{\frac{1}{4}} (d + 3)^{\frac{1}{4}}} n^{-\frac{3}{4}} e^{2\pi \sqrt{\frac{n}{3d + 9}}},
\end{equation}
for $a < \frac{d + 3}{2}$ such that $a$ is relatively prime to $d + 3$. This gives that
\[
\log{\Qda} \sim \log{\left(\frac{\csc(\frac{\pi a}{d + 3})}{(4\pi) 3^{\frac{1}{4}} (d + 3)^{\frac{1}{4}}} n^{-\frac{3}{4}}\right)} + 2\pi \sqrt{\frac{n}{3d + 9}} \sim 2\pi \sqrt{\frac{n}{3d + 9}}.
\]
        
Together, \eqref{eq:Andrews_q} and \eqref{eq:Andrews_Q} imply that
\[
\lim_{n \to \infty} \left(\log{\qda} - \log{\Qda}\right) = +\infty.
\]
Thus,
\[
\lim_{n \to \infty} \Daa = \lim_{n \to \infty} \qda \left(1 - \frac{\Qda}{\qda}\right) = +\infty,
\]
as desired.
\end{proof}

The asymptotic formulas for \(\qda\) and \(\Qda\) given by Meinardus \cite{meinardus_asymptotische_1954, meinardus_uber_1954} and Andrews \cite{andrews_theory_1976}, while allowing us to examine the asymptotic behavior of \(\Daa\), can be made more explicit by following methods of Alfes, Jameson, and Lemke Oliver \cite{alfes_proof_2011}. 

First we state a theorem of Xia \cite{xia_general_2011}.
\begin{theorem}[Xia \cite{xia_general_2011}, 2011]\label{thm:xia}
Let $d\geq 1$, and $1\leq a < \frac{d+3}{2}$, such that $\gcd(a, d+3)=1$. Then for $n\geq 1$,
\[
\Q{d}{1}{n} \geq \Q{d}{a}{n}.
\]
\end{theorem}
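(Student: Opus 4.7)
The plan is to prove $\Q{d}{1}{n} \geq \Q{d}{a}{n}$ by constructing an injection $\varphi \colon \mathcal{P}_S(n) \hookrightarrow \mathcal{P}_T(n)$, where $m := d+3$, $S := \{k \geq 1 : k \equiv \pm a \pmod m\}$, $T := \{k \geq 1 : k \equiv \pm 1 \pmod m\}$, and $\mathcal{P}_U(n)$ denotes the set of partitions of $n$ with parts in $U$.  A direct application of Theorem \ref{thm:andrews ST} fails: although the smallest element of $T$ is $1 \leq a$, the second-smallest element of $T$ is $m-1$, whereas that of $S$ is $m-a$, which is strictly smaller when $a \geq 2$.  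Hence the coprimality hypothesis $\gcd(a, m) = 1$ must enter essentially.

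The natural first attempt uses $\gcd(a,m) = 1$ to build a structural rewriting map.  Every element of $S$ admits a decomposition into elements of $T$: a part of the form $jm + a$ can be written as $(jm + 1) + (a-1) \cdot 1$, and a part of the form $jm - a$ can be written as $jm - a$ copies of $1$.  Applying this part-by-part to $\lambda \in \mathcal{P}_S(n)$ yields a valid partition of $n$ with parts in $T$.  Unfortunately, in this naive form the map is not injective: for $m = 5$, $a = 2$ both $(8,2)$ and $(3,3,2,2)$ collapse to $(1^{10})$, since too much information disappears into the pool of $1$-parts.

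The main obstacle, and what I expect to be the hard part, is repairing this to an honest injection.  One idea is to first \emph{pair} parts of residue $+a$ with parts of residue $-a$, rewriting each pair $(jm + a) + (km - a) = (j+k)m$ as a specific sum of two large $T$-parts (for example $((j+k-1)m + 1) + (m-1)$), so that most of the information is retained in the large-part structure rather than being pulverized into $1$'s.  Any leftover unpaired parts would then be handled by the naive rule, with remainders controlled via a Bezout relation $ua + vm = 1$.  As an analytic alternative, Jacobi's triple product identity gives
\[
\poch{a}{m}\poch{m-a}{m} = \frac{1}{\poch{m}{m}} \sum_{n \in \Z} (-1)^n q^{an + mn(n+1)/2}
\]
and similarly for $a = 1$, reducing the desired inequality to a positivity statement for a quotient of theta series.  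If neither the combinatorial nor the $q$-series route closes cleanly, a final fallback is to couple Meinardus-type asymptotics for $\Q{d}{r}{n}$ (in the spirit of Theorem \ref{thm:asymptotic result}, exploiting $\gcd(a,m) = 1$) with a finite check for small $n$.
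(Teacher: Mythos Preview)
The paper does not give its own proof of this theorem: it is simply quoted as a result of Xia \cite{xia_general_2011} and then applied. So there is no ``paper's proof'' to compare against; the relevant question is only whether your proposal stands on its own.

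It does not, and you essentially say so yourself. What you have written is a plan listing three candidate approaches, each left open. The combinatorial route is the most developed, but the pairing repair you suggest has the same defect as the naive map: writing a pair $(jm+a)+(km-a)=(j+k)m$ as $((j+k-1)m+1)+(m-1)$ records only the sum $j+k$, so distinct pairs $(j,k)$ and $(j',k')$ with $j+k=j'+k'$ collide. You would need a rewriting that encodes both $j$ and $k$ (or at least enough to recover the original multiset of $S$-parts), and you have not indicated how to do that; the Bezout remark for leftover parts is likewise only a hint. The Jacobi triple product reformulation is correct as stated but merely translates the problem into a positivity claim for a ratio of theta-type series, which you do not address. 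The asymptotic-plus-finite-check fallback is not a proof at all for the theorem as stated, since the ``finite check'' threshold depends on $d$ and the statement must hold for every $d\geq 1$.

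In short: your diagnosis that Theorem~\ref{thm:andrews ST} cannot be applied directly is right, and your instinct that coprimality must be used essentially is right, but none of the three branches is carried far enough to constitute a proof. If you want to pursue this, Xia's original argument in \cite{xia_general_2011} is the reference to consult.
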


We see that under the hypotheses of Theorem \ref{thm:xia}, $\Q{d}{1}{n}$ bounds $\Qda$ from above, thus the asymptotic expression for $\Q{d}{1}{n}$ obtained by Alfes, Jameson, and Lemke Oliver \cite[Theorem 2.1]{alfes_proof_2011}) is a bound for $\Qda$.  We thus immediately obtain the following.
If $d \geq 4$, $1\leq a < \frac{d+3}{2}$ such that $\gcd(a, d+3)=1$, and $n\geq 1$, then
\begin{equation}\label{eq:Q asymptotic}
Q_d^{(a)}(n) \leq \frac{(3d+9)^{-\frac{1}{4}}}{4\sin \left(\frac{\pi}{d+3}\right)}n^{-\frac{3}{4}}\exp\left( {n^{\frac{1}{2}}\frac{2\pi}{\sqrt{3(d+3)}}}\right) + R(n), 
\end{equation}
where $R(n)$ is an explicitly bounded function given in Alfes, et al \cite[equation \((2.10)\)]{alfes_proof_2011}.

Making only minor modifications to the analysis done by Alfes, et al \cite[Proof of Theorem 3.1]{alfes_proof_2011}, we also get the following asymptotic expression.
Let $A$ and $\alpha$ be defined as in \eqref{eq:Adef}.  Then for $n\geq 1$, 
\begin{equation}\label{eq:q asymptotic}
\qda = \frac{A^{1/4}}{2\sqrt{\pi \alpha^{d+1-2a}(d\alpha^{d-1}+1)}}n^{-3/4}e^{2\sqrt{An}} + r_d(n), 
\end{equation}
where $|r_d(n)|$ can be bounded explicitly.

\subsection{Potential methods for proving remaining cases of Conjecture \ref{conj:KPconj}}
Theorem \ref{thm:final kp thm} resolves Conjecture \ref{conj:KPconj} for $d\geq 62$.  Furthermore, when $d\leq 61$, Kang and Park \cite[Theorem 1.3]{kang_analogue_2020} have resolved the cases when $d=2$ and $d=30$ for even $n$.  Moreover, the cases when $d\leq 61$ is odd and $n$ is odd is trivial, as discussed in the proof of Theorem \ref{thm:final kp thm}. 

It may be possible to use the method of Alfes, et al \cite{alfes_proof_2011} to prove some or all of the remaining finite cases by computation. One idea for even $4\leq d\leq 60$, is to utilize the asymptotic expressions for $\q{d}{2}{n}$ and $\Q{d}{2}{n}$ from \eqref{eq:Q asymptotic} and \eqref{eq:q asymptotic} respectively, to calculate the smallest $n_d$, for fixed $d$, for which these bounds imply $\Da{d}{2}{n} \geq 0$ for all $n \geq n_d$. By \eqref{rem:thing}, this would also imply \(\Ddasha{d}{2}{n} \geq 0\) for all $n \geq n_d$. Then one could conceivably check that $\Ddasha{d}{2}{n} \geq 0$ for all smaller $n$. 

Alternatively, one could utilize the asymptotic bounds for $\q{d}{1}{n}$ and $\Q{d-2}{1}{n}$ given by Alfes, et al \cite{alfes_proof_2011}, and using the same procedure outlined above, potentially prove the missing cases of Proposition \ref{prop:k-2andmlemma}.  Then utilizing the method in the proof of Theorem \ref{thm:final kp thm} this would yield the remaining cases of Conjecture \ref{conj:KPconj}. It is worth noting that this procedure has its own computational challenges.

\section{Discussion of the exclusion of parts}\label{sec:data and dash discussion section}

When considering the values of $\Daa$ for $a\geq 2$ there are many examples where $\Daa < 0$. Kang and Park \cite{kang_analogue_2020} remove the part $d + 3 - a$ from consideration when defining $\Q{d}{2,-}{n}$ to generalize Alder's conjecture to a statement which generalizes the second Rogers-Ramanujan identity. We remove both $a$ and $d+3-a$ when defining $\Q{d}{a,-,-}{n}$ in order to generalize Conjecture \ref{conj:KPconj} for arbitrary $a$. While the removal of these terms may appear unmotivated, we argue that it is natural.  One way to consider this is to determine values of $\qda$ and $\Qda$ when $\Daa$ is negative. In the following we provide some examples.

\begin{example} \label{ex:staircase}
For $a \geq 4$ and $k\geq 0$, we have
\[
\Delta_{a+k-2}^{(a)}(2a+k+1) < 0.
\]
\end{example}

\noindent Example \ref{ex:staircase} follows from observing that $q_{a+k-2}^{(a)}(2a+k+1)=1$ since $(2a+k+1)$ is the only partition counted, and that $Q_{a+k-2}^{(a)}(2a+k+1) \geq 2$ since the partitions $(2a+k+1)$ and $(a^2, k+1)$ are both counted by $Q_{a+k-2}^{(a)}(2a+k+1)$.

\begin{example}\label{ex:cases}
We have that,
\begin{align*}
\Ddash{3a-3}{2a}{2a}{4a} &= -1 \text{ for } a \geq 2, \\
\Ddash{3a-3}{2a}{2a}{6a} &= -1 \text{ for } a \geq 4, \\
\Ddash{5a-3}{2a}{2a}{8a} &= -1 \text{ for } a \geq 4, \\
\Ddash{4a-3}{3a}{3a}{9a} &= -1 \text{ for } a \geq 4,\\
\Ddash{5a-3}{4a}{4a}{12a} &= -1 \text{ for } a \geq 4. 
\end{align*}
\end{example}

\noindent The first line in Example \ref{ex:cases} follows from observing that $q_{3a-3}^{(2a)}(4a)=1$, and that $Q_{3a-3}^{(2a,-)}(4a) = 2$ since it counts the partitions $(4a)$ and $((2a)^2)$.  The other four follow in the same manner.

By examining the cases in Examples \ref{ex:staircase} and \ref{ex:cases}, we find that the parts which yield more partitions for $\Qda$ than $\qda$ are precisely the parts equal to $d + 3 - a$ and $a$.  Roughly speaking, when $n$ is less than $2a + d$, $\qda$ is bounded at $1$. However the $a$ and $d + 3 - a$ parts are small enough that they can contribute to partitions of $n$ counted by $\Qda$, especially when $n$ is a multiple of $a$. The removal of these parts for large $n$ is unnecessary to maintain the nonnegativity of $\Daa$, which is further evidenced by the proof of Theorem \ref{thm:asymptotic result}.  A further justification for removing these parts is that we observe computationally that $\Daadashdash \geq 0$ for $a \geq 3$.


\end{document}